\documentclass{amsart}

\usepackage{mathtools,amsmath, amsfonts, amssymb, latexsym, amsthm,
  amscd, cancel, enumerate, rotating, comment, appendix,
  makecell, paralist,times,graphicx,pgf,geometry,dirtytalk,
appendix}
\usepackage[colorlinks=true, pdfstartview=FitV, linkcolor=blue, citecolor=blue, urlcolor=blue]{hyperref}
\usepackage[capitalise, noabbrev]{cleveref}
\usepackage{paralist}
\usepackage{pgf}
\usepackage{times}
\usepackage{enumitem}
\usepackage{caption}
\usepackage{subcaption}
\usepackage[T1]{fontenc}

\newcommand{\R}{{\sf{Div}}}

\usepackage{xcolor}
\usepackage{tikz}
\usepackage{tikz-3dplot}
\usetikzlibrary{shapes}
\usetikzlibrary{snakes}
\usetikzlibrary{arrows}
\usepackage{tikz-cd}
\usetikzlibrary{matrix, decorations.pathmorphing}

\usetikzlibrary{calc}
\usetikzlibrary{decorations.markings}

\tikzset{
  midarrow/.style={
    decoration={markings,mark=at position 0.5 with {\arrow{>}}},
    postaction={decorate}
  }
}

\newif\ifdviwin

\numberwithin{equation}{section}
\definecolor{gold}{rgb}{0.9, 0.7, 0.2}

\theoremstyle{plain}
\newtheorem{theorem}{Theorem}[section]

\newtheorem*{TheoremA}{Theorem A}
\newtheorem*{TheoremB}{Theorem B}

\newtheorem*{Main Theorem}{Main Theorem}

\newtheorem{proposition}[theorem]{Proposition}
\newtheorem{lemma}[theorem]{Lemma}
\newtheorem{corollary}[theorem]{Corollary}

{\Alph{theorem}}
{\Alph{theorem}}

\theoremstyle{definition}
\newtheorem{notation}[theorem]{Notation}
\newtheorem{setup}[theorem]{Setup}

\newtheorem{remark}[theorem]{Remark}

\newtheorem{definition}[theorem]{Definition}
\newtheorem{example}[theorem]{Example}

\newtheorem{question}[theorem]{Question}

\newtheorem{Remark}[theorem]{Remark}

  \newcounter{numlist} %
  {\end{list}}%

\theoremstyle{remark}

\newtheorem{claim}{Claim}

\newtheorem{chunk}[theorem]{}

\numberwithin{equation}{theorem}

\newcommand{\st}{\ \colon \ }

\newcommand{\pd}{\mathrm{pd}}
\newcommand{\Taylor}{\mathrm{Taylor}}

\newcommand{\lcm}{\mathrm{lcm}}

\newcommand{\e}{\epsilon}
\newcommand{\ed}[1]{\epsilon_{\mbox{\tiny$\D$},#1}}
\newcommand{\LL}{\mathbb{L}}
\newcommand{\TT}{\mathbb{T}}
\newcommand{\tuple}[1]{\langle #1 \rangle}

\newcommand{\ssm}{\smallsetminus}

\newcommand{\E}{\mathcal{E}}
\newcommand{\NN}{\mathbb{N}}
\newcommand{\RR}{\mathbb{R}}
\newcommand{\ba}{\mathbf{a}}

\newcommand{\be}{\mathbf{e}}

\newcommand{\qwhere}{\quad \mbox{ where } \quad }
\newcommand{\qand}{\quad \mbox{ and } \quad }
\newcommand{\qfor}{\quad \mbox{ for } \quad }

\newcommand{\qif}{\quad \mbox{if} \quad}
\newcommand{\qwith}{\quad \mbox{with} \quad}
\newcommand{\qwhen}{\quad \mbox{when} \quad}
\newcommand{\qforsome}{\quad \mbox{for some} \quad}
\newcommand{\qforall}{\quad \mbox{for all} \quad}

\newcommand{\ED}{\E_{q,\D}}
\newcommand{\EDr}{\ED^{\,\,r}}
\newcommand{\EDsq}{\ED^{\,\,2}}
\newcommand{\Eqsq}{\E_q^{\,\,2}}

\newcommand{\LDsq}{\LD^2}

\newcommand{\LqD}{\LL_{q,\D}}

\newcommand{\LqDsq}{\LqD^2}

\renewcommand{\LDsq}{\LqDsq}

\newcommand{\Nrq}{\N^r_q}
\newcommand{\pme}{{\pmb{\e}}}
\newcommand{\ped}{\pme_{\mbox{\tiny$\D$}}}

\newcommand{\N}{\mathcal{N}}
\newcommand{\U}{\mathcal{U}}

\newcommand{\M}{\mathcal{M}}
\newcommand{\D}{\mathcal{D}}

\newcommand{\sfk}{\mathsf k}

\begin{document}
\author[S.~M.~Cooper]{Susan M. Cooper}
\address{Department of Mathematics\\
University of Manitoba\\
Winnipeg, MB\\
Canada R3T 2N2}
\email{susan.cooper@umanitoba.ca}

\author[S.~El Khoury]{Sabine El Khoury}
\address{Applied Mathematics and Computational Science program,
King Abdullah University Of Science and Technology,
Thuwal,  KSA 23955}
\email{sabine.khoury@kaust.edu.sa}

\author[S.~Faridi]{Sara Faridi}
\address{Department of Mathematics \& Statistics\\
Dalhousie University\\
6316 Coburg Rd.\\
PO BOX 15000\\
Halifax, NS\\
Canada B3H 4R2}
\email{faridi@dal.ca}

\author[S.~Morey]{Susan Morey}
\address{Department of Mathematics\\
Texas State University\\
601 University Dr.\\
San Marcos, TX 78666\\U.S.A.}
\email{morey@txstate.edu}

\author[L.~M.~\c{S}ega]{Liana M.~\c{S}ega}
\address{Division of Computing, Analytics and Mathematics, 
University of Missouri-Kansas City, Kansas City, MO 64110 U.S.A.}
\email{segal@umkc.edu}

\author[S.~Spiroff]{Sandra Spiroff }
\address{Department of Mathematics,
University of Mississippi,
Hume Hall 335, P.O. Box 1848, University, MS 38677
USA}
\email{spiroff@olemiss.edu}

\setlist{font=\normalfont}

\keywords{powers of ideals; simplicial complex; Betti numbers; free
  resolutions; monomial ideals; extremal ideals}

\subjclass[2010]{13D02; 13F55; 05E40}

\title{ Cellular resolutions of second powers of square-free monomial ideals with divisibility relations}

\begin{abstract}
Using divisibility relations between the generators of a square-free monomial ideal $I$, we describe divisibility relations between the generators of the second power $I^2$. We then employ  discrete Morse theory to produce a cellular free resolution of $I^2$ which is minimal for specific ideals that are  extremal with respect to a given divisibility relation. In particular, we provide sharp bounds on the projective dimension of $I^2$ when the generators of $I$ satisfy at least one divisibility relation.  \end{abstract} 
\maketitle

\section{\bf Introduction}
A free resolution of an ideal generated by a set of polynomials, introduced in the nineteenth century by David Hilbert, is a structure that encodes all the relations between those polynomials via a sequence of free modules or vector spaces. 
An effective tool for constructing free resolutions of monomial ideals is homogenizing boundary maps of topological structures. This point of view allows one to translate questions about bounds on Betti numbers of monomial ideals into bounds on enumerating cells of acyclic cell complexes. 

The earliest results in this direction are due to Diana Taylor~\cite{T}: every monomial ideal $I$ of a polynomial ring generated by $q$ monomials  has a free resolution -- known today as  {\it Taylor's resolution} -- supported on a simplex on $q$ vertices.  Since the minimal free resolution of a monomial ideal is unique, Taylor's resolution provides an upper bound for the Betti numbers of $I$ as  $\beta_i(I) \leq \binom{q}{i+1}$. The binomial bound provided by Taylor's resolution is sharp in the sense that  there are ideals whose Betti numbers are exactly those binomial coefficients. For example, an ideal generated by $q$ distinct variables $I=(x_1,\ldots,x_q)$ will satisfy $\beta_i(I) = \binom{q}{i+1}$.

However in general, the binomial coefficients tend to be too large to be effective bounds for Betti numbers. This is because the Betti numbers keep track of the minimal number of relations between the generators of the ideal, and typically any $q$ monomials  are more inter-related than $q$ distinct variables are. So for an ideal $I$ generated by, say, three monomials $m_1,m_2,m_3$,  the simplex labeled and shown in \cref{f:two-edges} on the left always supports a resolution. If, additionally, we know that $m_1 \mid \lcm(m_2,m_3)$, then the graph on the right in \cref{f:two-edges} also supports a (in fact minimal) free resolution of $I$ (see~\cite{BS,F14}). Moreover, the bounds on the Betti numbers provided by the graph are smaller than the binomial bounds offered by the Taylor simplex on the left. 
As the size of the ideal grows, the reductions in these bounds can be quite significant.

\begin{figure}[!htbp]
$$
{\small \begin{array}{cc}
\mbox{Taylor complex of } I & 
\mbox{Graph $G$ supporting resolution of } I \\
\mbox{for any } m_1, m_2, m_3
&\mbox{when } m_1 \mid \lcm(m_2,m_3)\\
&\\
\begin{tikzpicture}
\tikzstyle{point}=[inner sep=0pt]

\node (b)[point,label=left:$m_2$] at (-.5,0){};
\node (c)[point,label=right:$m_1$] at (0,1){};
\node (d)[point,label=right:$m_3$] at (.5,0){};

\draw  [fill=gray!30] (d.center) -- (b.center) -- (c.center) -- cycle;
\draw (b.center) -- (c.center);
\draw (b.center) -- (d.center);
\draw (c.center) -- (d.center);

\filldraw [black] (b.center) circle (1pt);
\filldraw [black] (c.center) circle (1pt);
\filldraw [black] (d.center) circle (1pt);
\end{tikzpicture}
&
\begin{tikzpicture}
\tikzstyle{point}=[inner sep=0pt]
\node (b)[point,label=left:$m_2$] at (-.5,0){};
\node (c)[point,label=right:$m_1$] at (0,1){};
\node (d)[point,label=right:$m_3$] at (.5,0){};

\draw (b.center) -- (c.center);
\draw (c.center) -- (d.center);

\filldraw [black] (b.center) circle (1pt);
\filldraw [black] (c.center) circle (1pt);
\filldraw [black] (d.center) circle (1pt);
\end{tikzpicture}\\

&\\
(\beta_0(I),\beta_1(I),\beta_2(I))\leq (3,3,1)&
(\beta_0(I),\beta_1(I),\beta_2(I))\leq (3,2,0)
\end{array}
}
$$
\caption{Simplicial resolutions of $I=(m_1,m_2,m_3)$}\label{f:two-edges}
\end{figure}

If we take the square $I^2$ of a monomial ideal with $q \geq 2$ generators, Cooper et al~\cite{L2} observed that the Taylor resolution will never be minimal, making the binomial bounds given by the Taylor simplex unachievably large. As an alternative, when $I$ is square-free, the authors of~\cite{L2} offered the simplicial complex $\LL^2_q$ (\cref{f:L2-box}), which always supports the free resolution of  $I^2$, and when  $I$ is the {\it extremal} ideal $\E_q$~\cite{Lr}, $\LL^2_q$ supports a {\it minimal} resolution of $\Eqsq$. The complex $\LL^2_q$ is significantly smaller than the Taylor simplex on the same number of vertices, giving tighter bounds on Betti numbers, and the existence of the extremal ideal makes these bounds sharp. 

The  same authors observed in~\cite{Morse,koszul} that if $I$ has a (minimal) free resolution supported on a graph $G$ on $q$ vertices, as in the case for the ideal on the right in \cref{f:two-edges}, then $I^2$ would have a (minimal) free resolution supported on a cell complex that can be combinatorially constructed as $G^2$. For the example in \cref{f:two-edges} the cell complex $G^2$ is shown on the right in \cref{f:L2-box}. 

\begin{figure}[!htbp]
$$
{\small
\begin{array}{cc}
\LL^2_3& 
\mbox{Cell complex $G^2$ supporting resolution of } I^2 \\
&\mbox{when } m_1 \mid \lcm(m_2,m_3)\\
&\\
\begin{tikzpicture}
\tikzstyle{point}=[inner sep=0pt]
\node (A)[point,label=left:$m_1m_2$] at (-.5,0){};
\node (B)[point,label=above:$m_2m_3$] at (0,.75){};
\node (C)[point,label=right:$m_1m_3$] at (.5,0){};
\node (D)[point,label=left:$m_2^2$] at (-1,.75){};
\node (E)[point,label=right:$m_1^2$] at (0,-.75){};
\node (F)[point,label=right:$m_3^2$] at (1,.75){};

   \draw (A) -- (B);
   \draw (B) -- (C);   
   \draw (C) -- (A);
  \draw  [fill=gray!30] (A.center) -- (B.center) -- (C.center) -- cycle;

  \draw (A) -- (B);
   \draw (B) -- (D);   
   \draw (D) -- (A);
  \draw  [fill=gray!30] (A.center) -- (B.center) -- (D.center) -- cycle;

  \draw (A) -- (E);
   \draw (E) -- (C);   
   \draw (C) -- (A);
  \draw  [fill=gray!30] (A.center) -- (E.center) -- (C.center) -- cycle;

  \draw (F) -- (B);
   \draw (B) -- (C);   
   \draw (C) -- (F);
  \draw  [fill=gray!30] (F.center) -- (B.center) -- (C.center) -- cycle;

\foreach \point in {A, B, C, D, E, F} \fill (\point) circle (2pt);
 \end{tikzpicture}
 &
\begin{tikzpicture}
\tikzstyle{point}=[inner sep=0pt]
\node (A)[point,label=left:$m_1m_2$] at (-.5,0){};
\node (B)[point,label=above:$m_2m_3$] at (0,.75){};
\node (C)[point,label=right:$m_1m_3$] at (.5,0){};
\node (D)[point,label=left:$m_2^2$] at (-1,.75){};
\node (E)[point,label=right:$m_1^2$] at (0,-.75){};
\node (F)[point,label=right:$m_3^2$] at (1,.75){};

  \draw  [fill=gray!30] (A.center) -- (B.center) -- (C.center) -- (E.center)--cycle;
  
   \draw (A) -- (D);
   \draw (A)-- (B);
   \draw (B) -- (C);
   \draw(C) -- (E);
   \draw (E) -- (A);
  \draw (F) -- (C);
\foreach \point in {A, B, C, D, E, F} \fill (\point) circle (2pt);
 \end{tikzpicture}\\
 &\\
 (\beta_0(I^2),\beta_1(I^2),\beta_2(I^2))\leq (6,9,4)&
(\beta_0(I^2),\beta_1(I^2),\beta_2(I^2))\leq (6,6,1)
\end{array}
}
$$
\caption{Cellular resolutions of $I^2=(m_1,m_2,m_3)^2$}\label{f:L2-box}
\end{figure}

With this mindset, a natural question is the following one.

\begin{question}\label{q:power-complex}
If a simplicial complex $\Delta$ supports a (minimal) free resolution of a monomial ideal $I$, then can we give a combinatorial definition of  a cell complex 
$\Delta^r$ which supports a (minimal) free resolution of $I^r$?
\end{question}

The simplicial complex $\Delta$ mentioned in \cref{q:power-complex} can be a strict subcomplex of the Taylor simplex when there is a \say{divisibility relation} such as $m_1 \mid \lcm(m_2,\ldots,m_s)$ between (some of) the monomial generators of $I$. Such a  relation can be formalized by the indices of the monomials as $(1,\{2,\ldots,s\})$ (\cite{Divisibilities}).  For example, the divisibility relation $(1, \{2,3\})$ between the generators of $I$ in the right column of \cref{f:two-edges} allowed us to improve the bounds on the Betti numbers of both $I$ and $I^2$ (\cref{f:L2-box}).

When there is a set $\D$ of divisibility relations between the $q$ square-free monomials that generate $I$, the Taylor simplex and the simplicial complex $\LL_q^2$ are both too large in the sense that there is {\it no ideal} satisfying the divisibility relations of $\D$ for which the resulting resolutions are minimal. This leads to a refinement of \cref{q:power-complex}, which will be the theme of investigation for this paper.

\begin{question}\label{q:D} Let $I$ be an ideal generated by $q$ square-free monomials and let $\D$ be a set of divisibility relations between those monomials. Suppose $\Delta$ is  a simplicial complex that supports a free resolution of $I$.
\begin{enumerate}
\item  How can one prune $\LL^2_q$ using $\D$ to find a cell complex $\Delta^2$ that supports a free resolution of $I^2$?
\item If $\Delta$ supports a {\it minimal} resolution of $I$, when does $\Delta^2$ in ~(1) support a {\it minimal} resolution of $I^2$? 
\end{enumerate}
\end{question}

Our goal in this paper is to answer \cref{q:D} when $\D$ contains at least one nontrivial divisibility relation, for example when $m_1 \mid \lcm (m_2,\ldots,m_s)$ for some $s\ge 3$. The way we achieve our goal is by  making use of the $\D$-extremal ideal $\ED$, defined by the authors  in~\cite{Divisibilities}. These ideals are called extremal because their powers have the maximal Betti numbers among all powers of ideals generated by $q$ square-free monomials that satisfy the relations in $\D$. In particular, it was shown in \cite[Theorem 4.4]{Divisibilities} that if $I$ is any square-free monomial ideal satisfying the divisibility relations in $\D$ 
and $\Delta$ is any simplicial complex supporting a free resolution of $\EDr$, then $\Delta$ also supports a free resolution of $I^r$. 

If $q=3$ and $\D=\{(1,\{2,3\})\}$, 
then we can take $\Delta$ and $\Delta^2$ to be the complexes on the right side of \cref{f:two-edges,f:L2-box}, respectively. If $q=4$ with the same $\D=\{(1,\{2,3\})\}$,
then the $\Delta$ on the left in \cref{f:two-triangles-edge} supports a free resolution of $I$, and we prove in this paper that  $\Delta^2$ is the cell complex whose 1-skeleton is depicted on the right and whose higher dimensional cells are described in \cref{s:geometric} (see \cref{f:L24-cut-down}).   It is worth highlighting that deleting all vertices involving $m_4$ in the complexes in \cref{f:two-triangles-edge} produces the graph $G$ and the cell complex $G^2$ in  \cref{f:two-edges,f:L2-box}.

\begin{figure}[!htbp]
{\small
\begin{tabular}{ccc}
$\Delta$ supports resolution of $I$ &
& 
$\Delta^2$  supports resolution of $I^2$  \\
\raisebox{1cm}{
\begin{tikzpicture}
\tikzstyle{point}=[inner sep=0pt]
\node (a)[point,label=left: $m_2$] at (-1,1) {};
\node (b)[point,label=right:$m_1$] at (0,0){};
\node (c)[point,label=right:$m_4$] at (0,2){};
\node (d)[point,label=right:$m_3$] at (1,1){};
\draw (a.center) -- (b.center);
\draw (a.center) -- (c.center);
\draw (a.center) -- (d.center);
\draw (b.center) -- (c.center);
\draw (b.center) -- (d.center);
\draw (c.center) -- (d.center);
\draw  [fill=gray!30] (a.center) -- (b.center) -- (c.center) -- cycle;
\draw  [fill=gray!30] (d.center) -- (b.center) -- (c.center) -- cycle;
\filldraw [black] (a.center) circle (1pt);
\filldraw [black] (b.center) circle (1pt);
\filldraw [black] (c.center) circle (1pt);
\filldraw [black] (d.center) circle (1pt);
\end{tikzpicture}}
&&  
{\tiny \tdplotsetmaincoords{60}{150}
\begin{tikzpicture}[tdplot_main_coords,scale=.7]
    \coordinate (A) at (0,0,0);      
    \coordinate (B) at (2,0,0);      
    \coordinate (C) at (4,0,0);      
    \coordinate (D) at (0,2,0);      
    \coordinate (E) at (0,4,0);      
    \coordinate (F) at (2,2,0);      
    \coordinate (G) at (0,0,2);      
    \coordinate (H) at (0,0,4);      
    \coordinate (I) at (2,0,2);      
    \coordinate (J) at (0,2,2);      
\draw (F) -- (B) ;
\draw (F) -- (D);
\draw (A) -- (B);
\draw (A) -- (D);   
\draw (A) -- (G);
\draw (G) -- (H);
\draw (G) -- (J);
\draw (G) -- (I);
\draw (B) -- (I);
\draw (G) -- (B);
\draw (B) -- (C);
\draw (I) -- (C);
\draw (H) -- (I);
\draw (H) -- (J); 
\draw (J) -- (D);  
\draw (E) -- (J);
\draw (F) -- (J);
\draw (G) -- (F);
\draw (I) -- (F);
\draw (G) -- (D);
\draw (D) -- (E);
    \foreach \point in {A, B, C, D, E, F, G, H, I, J} \fill (\point) circle (2pt);
 
    \coordinate (AA) at (1.1,.8,0.4);
    \coordinate (BB) at (2.6,.5,0);
    \coordinate (CC) at (3.5,0,0);
    \coordinate (DD) at (0.6,2.5,0);
    \coordinate (EE) at (0,3.5,0);
    \coordinate (FF) at (2.5,2.5,0);
    \coordinate (GG) at (0.7,0,2.4);
    \coordinate (HH) at (.5,0,4.25);
    \coordinate (II) at (1.75,0,1.75);
    \coordinate (JJ) at (0,1.75,1.75);    
    
    \node[right] at (AA) {\tiny{$m_1^2$}};
    \node at (BB) {\tiny{$m_1m_2$}};
    \node[above left] at (C) {\tiny{$m_2^2$}};
    \node  at (DD) {\tiny{$m_1m_3$}};
    \node[above right] at (E) {\tiny{$m_3^2$}};
    \node at (FF) {\tiny{$m_2m_3$}};
    \node at (GG) {\tiny{$m_1m_4$}};
    \node at (HH) {\tiny{$m_4^2$}};
    \node[left] at (I) {\tiny{$m_2m_4$}};
    \node[right] at (J) {\tiny{$m_3m_4$}};
    \draw[-] (0,0,0) -- (3.5,0,0) node[anchor=north east]{};
    \draw[-] (0,0,0) -- (0,3.5,0) node[anchor=north west]{};
    \draw[-] (0,0,0) -- (0,0,3.5) node[anchor=south]{};
\end{tikzpicture} }
  \end{tabular}
}
\caption{} \label{f:two-triangles-edge}
\end{figure}

To achieve our goal,  we offer a specific Morse matching on the faces of the $\LL^2_q$ based on one divisibility relation $(1,\{2,\ldots,s\})$, and we  use this matching to prune the extra faces of $\LL^2_q$. The end result is a cell complex $\LDsq$ that supports a free resolution of the square of any ideal generated by $q$ square-free monomials which satisfy $m_1 \mid \lcm(m_2,\ldots,m_s)$ for some $s \leq q$, and this resolution can be minimal.

\begin{TheoremA}[\cref{t:minimality}]  Suppose $\D=\{(1,\{2,\ldots,s\})\}$ is a set consisting of one divisibility relation, and let $I$ be an ideal generated by $q\geq s>0$ square-free monomials satisfying the divisibility relation in $\D$. 
Then there is a cellular complex $\LDsq$ on ${{q+1}\choose{2}}$ 
 vertices supporting a free resolution of $I^2$. Moreover, the resolution of $I^2$ supported on $\LDsq$ is minimal if $I$ is the $\D$-extremal ideal $\ED$.
\end{TheoremA}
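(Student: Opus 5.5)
The plan is to start from the complex $\LL^2_q$ of \cite{L2}, which supports a free resolution of $I^2$ for every square-free monomial ideal $I$ with $q$ generators. Its vertices are the $\binom{q+1}{2}$ products $m_im_j$ with $i\le j$. We then apply discrete Morse theory in its algebraic/cellular form (Batzies--Welker): an acyclic matching on the faces of $\LL^2_q$ that pairs only faces with the \emph{same} lcm label produces a cellular complex, the Morse complex, that still supports a free resolution of $I^2$. The matching will never pair away a vertex, so the resulting $\LDsq$ keeps all $\binom{q+1}{2}$ vertices. By \cite[Theorem 4.4]{Divisibilities}, it suffices to build the matching for $\EDsq$: any complex supporting a resolution of $\EDsq$ also supports one of $I^2$ for every $I$ satisfying $\D$. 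Hence the whole argument can be carried out on the labels of $\EDsq$.

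\textbf{Step 1: divisibility relations in $I^2$.} From $m_1\mid\lcm(m_2,\dots,m_s)$ and square-freeness, derive the relations it induces among generators of $I^2$. For example, $m_1m_j\mid\lcm(m_2m_j,\dots,m_sm_j)$ for each $j$. Also $m_1^2\mid\lcm(m_1m_2,\dots,m_1m_s)$, and more generally $m_1^2$ divides the lcm of suitable sets of products $m_im_k$ with $i,k\in\{2,\dots,s\}$ together with the $m_1m_i$. Each such relation $(v,W)$ means that for any face $\sigma$ of $\LL^2_q$ containing $W$, the faces $\sigma\cup\{v\}$ and $\sigma\setminus\{v\}$ have equal lcm.

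\textbf{Step 2: the matching.} Fix an ordered list of pivot vertices, say $m_1^2$, then $m_1m_j$ for $j$ in some order, each with its associated witness set $W$ from Step 1. Proceed by the standard iterated ``toggle'' construction. At stage $k$, among the faces not yet matched, pair $\sigma$ with $\sigma\triangle\{v_k\}$ whenever $W_k\subseteq\sigma\setminus\{v_k\}$ and both faces are unmatched and lie in $\LL^2_q$. Each stage is a union of elementary matchings by a single vertex, so by the usual lemma (e.g.\ as in \cite{Morse}) the combined matching is acyclic. Label-preservation holds by Step 1. Define $\LDsq$ as the resulting Morse cell complex, with its critical cells. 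This gives the first assertion of Theorem A.

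\textbf{Step 3: minimality for $\ED$.} A cellular resolution is minimal iff no cell and a codimension-one face in its boundary (with nonzero incidence) share a label. So we must show that any two critical cells $\sigma\supset\tau$ with $\dim\sigma=\dim\tau+1$ that are connected by a gradient path in the Morse complex have different lcm labels in $\EDsq$. Here we will use the explicit description of $\ED$ from \cite{Divisibilities}: its variables are indexed by subsets respecting $\D$, so that $\lcm$'s of products are distinguished as much as the relation allows. The task is to describe the critical cells combinatorially, namely the faces of $\LL^2_q$ avoiding every pattern $W_k\cup\{v_k\}$ or its pivot-removed form. Then we check that removing any vertex of a critical face changes its lcm in $\EDsq$, using the fact that the only label coincidences in $\EDsq$ beyond those of $\E_q^2$ (which are already absent in $\LL^2_q$ by \cite{L2}) come from the relations of Step 1.

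The main obstacle is Step 3, together with getting the matching in Step 2 exactly right. The matching must be large enough to cancel every label coincidence forced by $\D$; otherwise minimality fails. At the same time it must remain acyclic and label-preserving. I would first test the choice of pivots and witnesses on the cases $q=3$ and $q=4$ with $s=3$ (\cref{f:L2-box,f:two-triangles-edge}), comparing the critical-cell counts with the Betti numbers of $\EDsq$, and then adjust the order of pivots before proving the general case.
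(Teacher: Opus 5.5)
Your framework is the paper's: a homogeneous acyclic matching on $\LL^2_q$ that never touches vertices, the Batzies--Welker Morse complex, and minimality checked by showing that deleting any vertex from a critical face changes its label in $\EDsq$. However, the substance of Theorem A is missing, and the parts you do specify would not give minimality.

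Your Step 1 omits exactly the relations that matter for $j>s$. On faces of $\LL^2_q$ one also has $m_1m_j\mid\lcm(m_2m_{t_2},\dots,m_sm_{t_s})$ with $\{t_2,\dots,t_s\}=\{1,j\}$, and $m_1m_j\mid\lcm(m_2m_1,\dots,m_sm_1,m_jm_u)$ with $u>s$, $u\neq j$. By contrast, your $m_1^2$-variants involving some $m_k^2$, $k\neq 1$, play no role, since $\be_{11}$ and $\be_{kk}$ never lie in a common face of $\LL^2_q$. If the witness sets come from your list, then for $q=4$, $s=3$ the faces $\{\be_{12},\be_{34}\}$ and $\{\be_{12},\be_{14},\be_{34}\}$ are never toggled, so both are critical. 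Yet they have the same label in $\EDsq$, because $\ed{1}\ed{4}\mid\lcm(\ed{1}\ed{2},\ed{3}\ed{4})$, so the resolution is not minimal.

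So the ``fact'' you invoke in Step 3 is false for your list. For the correct list it is the key result of the paper, \cref{p:needed-div}: if $\sigma\cup\{\be_{ij}\}\in\LL^2_q$ and $\ed{i}\ed{j}\mid\lcm(\sigma)$, then $i=1$ and $\sigma$ contains one of three patterns:
$\{\be_{2j},\dots,\be_{sj}\}$;
$\{\be_{2t_2},\dots,\be_{st_s}\}$ with $\{t_2,\dots,t_s\}=\{1,j\}$, $j>s$;
or $\{\be_{21},\dots,\be_{s1},\be_{ju}\}$ with $j\neq u$, $j,u>s$.
Its proof is where the extremal ideal is actually used. For $A\in Q(\D)$ and $i,v\in A$, the divisibility $\ed{i}\ed{v}\mid\lcm(\tau)$ forces some $\be_{ab}\in\tau$ with $a,b\in A$ (\cref{l:A-new}). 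Applying this to $A=\{i\},\{i,j\},\{1,k\},\{1,j\},\{1,k,j\}$ gives \cref{p:needed-div-T}. The list is then pruned using that a face of $\LL^2_q$ containing $\be_{cc}$ lies in $G_c$.

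The acyclicity claim in Step 2 is also not justified. \cref{matching-lemma} concerns a single vertex, and a sequence of witness-restricted toggles on the still-unmatched faces need not be acyclic. On the hollow triangle with vertices $x,y,z$, toggle $y$ on faces containing $x$, then $z$ on faces containing $y$, then $x$ on faces containing $z$. This gives the directed cycle $\{x\}\to\{x,y\}\to\{y\}\to\{y,z\}\to\{z\}\to\{x,z\}\to\{x\}$. The standard lemma on iterated element matchings requires unrestricted toggles.

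The paper instead assigns every face containing a pattern to the $\preccurlyeq$-largest pattern $\sigma$ it contains, with the order type (i) $\prec$ (ii) $\prec$ (iii). It matches inside each block $Y_\sigma$ by the single vertex $\omega(\sigma)=\be_{1j_\sigma}$, and gets acyclicity from the Cluster Lemma because the blocks are compatible with inclusion (\cref{t:Morse}).

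After that one must still identify the critical faces. These are the faces containing no pattern, together with the faces $\{\be_{21},\dots,\be_{s1}\}\cup\gamma\cup\gamma'$ with $\emptyset\neq\gamma\subseteq\{\be_{ik}\colon 1<i<k\le s\}$ and $\gamma'\subseteq\{\be_{1(s+1)},\dots,\be_{1q}\}$; the latter survive because $\gamma$ prevents adding $\be_{11}$. One then checks via \cref{p:needed-div} that no vertex deletion from such a face preserves the label. Testing $q=3,4$ and adjusting pivots may lead you to this, but as written the proposal is a plan rather than a proof.
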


As can be gleaned from our earlier discussions, the central player in our arguments will be the $\D$-extremal ideal $\ED$. Indeed, our Morse matching is built based on divisibility relations between the generators of $\EDsq$. 
Given a set of divisibility relations on $\leq q$ integers, the $\D$-extremal ideal $\ED$ was built in~\cite{Divisibilities} as an ideal with $q$ generators that satisfy only divisibility relations coming from $\D$, and no other divisibility relations~(\cite[Theorem 3.4]{Divisibilities}). We prove a similar statement in \cref{p:lcm-general} for $\EDsq$, and when $|\D| \leq 1$, characterizing all the divisibility relations that $\EDsq$ can satisfy. 

To reiterate, if $I$ is any ideal generated by $q$ square-free monomials where one generator divides the least common multiple of other generators (as is the case for most monomial ideals which are not complete intersections), then the generators of $I^2$ must satisfy, up to reindexing, the divisibility relations outlined in \cref{p:lcm-general}. Therefore the Morse matching that leads to a Morse complex $\LDsq$ is also based on divisibility relations satisfied by $I^2$, and for this reason, it supports a free resolution of $I^2$.

We then use the size of $\LDsq$ to find the projective dimensions of $\ED$ and $\EDsq$ when $|\D|=1$, and as a result we provide bounds on  the projective dimensions of the first and second powers of any square-free monomial ideals with a divisibility relation between their generators. 

\begin{TheoremB}[\cref{t:betti-2-bound}]
 Let $I$ be an ideal of a polynomial ring minimally generated by square-free monomials $m_1,\ldots,m_q$ such that $m_{i_1} \mid \lcm(m_{i_2},\ldots,m_{i_s})$ where $3 \leq s \leq q$ and 
$i_1,\ldots,i_s$  are distinct integers between $1$ and $q$. Then
$$
\pd(I) \leq q-2 \qand 
\pd(I^2) \leq \begin{cases}
\binom{q}{2}-(q-s+2)&\text{if $q>s$}\\
\binom{q}{2}-1  &\text{if $q=s$}.
\end{cases}
$$
Moreover, these bounds are sharp. 
\end{TheoremB}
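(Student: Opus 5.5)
We will deduce Theorem B from Theorem A together with the transfer result \cite[Theorem 4.4]{Divisibilities}. After reindexing the generators, we may assume the relation is $m_1 \mid \lcm(m_2,\ldots,m_s)$, so that $I$ satisfies the divisibility relation in $\D=\{(1,\{2,\ldots,s\})\}$. By \cite[Theorem 4.4]{Divisibilities}, any complex supporting a free resolution of $\ED^{\,r}$ also supports one of $I^r$. Hence $\pd(I^r)$ is at most the dimension of any such complex, and therefore $\pd(I^r)\le \pd(\ED^{\,r})$, with equality attained by $I=\ED$. The problem thus reduces to computing $\pd(\ED)$ and $\pd(\EDsq)$ exactly; sharpness then comes for free, since $\ED$ is itself generated by $q$ square-free monomials satisfying the relation.

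For $r=1$, take the subcomplex $\Delta$ of the Taylor simplex obtained by removing the faces that contain $\{1,2,\ldots,s\}$, i.e.\ the Morse matching pairing $\sigma$ with $\sigma\smallsetminus\{1\}$ whenever $\{2,\ldots,s\}\subseteq\sigma$. This is exactly the construction of \cref{f:two-edges,f:two-triangles-edge}. The top face $\{1,\ldots,q\}$ is removed, so $\Delta$ has dimension at most $q-2$, which gives $\pd(I)\le q-2$. For sharpness, the face $\{2,\ldots,q\}$ survives, and I would argue that the resolution of $\ED$ on $\Delta$ is minimal. Either it is the $\LL$-type minimal resolution from \cite{Divisibilities}, or directly: no two faces differing by a vertex have the same lcm in $\ED$, since by \cite[Theorem 3.4]{Divisibilities} $\ED$ satisfies only the divisibility relations generated by $\D$. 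So $\pd(\ED)=q-2$.

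For $r=2$, by Theorem A the resolution of $\EDsq$ supported on $\LDsq$ is minimal, so $\pd(\EDsq)=\dim \LDsq$. It remains to compute this dimension. Recall from \cite{L2} that $\LL^2_q$ has dimension $\binom{q}{2}-1$, with top faces on the $\binom q2$ vertices $m_im_j$ ($i<j$) together with one square $m_k^2$. The Morse matching pairs off cells according to the relations of \cref{p:lcm-general}. The key step is to identify the critical cells of maximal dimension. I expect the following outcome, as \cref{f:L2-box} suggests. When $q=s$, some facet of dimension $\binom q2-1$ remains critical, for instance one containing all $m_im_j$ with $i,j\ge2$ plus enough mixed terms; in the example $q=s=3$ this gives dimension $2=\binom32-1$. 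When $q>s$, every facet of $\LL^2_q$ loses at least $q-s+2$ dimensions: each index $t>s$ forces the removal of one vertex, via relations such as $m_1m_t\mid\lcm(m_jm_t : 2\le j\le s)$ for each $t>s$, and the relation among $m_1^2,m_1m_j,\ldots$ accounts for two more. This gives $\binom q2-(q-s+2)$.

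The main obstacle is this last count. It requires exhibiting an explicit critical cell of the claimed dimension, which gives the lower bound, and showing that every larger cell is matched, which gives the upper bound. I would first run through the explicit description of $\LDsq$ in \cref{s:geometric}, listing for each facet of $\LL^2_q$ which vertices the matching forces out, and check the computation against $q=4$, $s=3$, where the formula predicts $\dim=3$, consistent with the three-dimensional picture in \cref{f:two-triangles-edge}.
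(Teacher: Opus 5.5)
Your top-level strategy is the paper's: reduce to $\ED$, use the minimal resolutions on the pruned Taylor complex and on $\LDsq$, and read off the largest critical face. But the step that carries the content of the theorem, determining the maximal size of an $\M_{q,\delta}$-critical face, is only conjectured, and your sketch of it is incorrect.

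For $r=1$, the matching you describe removes every face containing $\{\be_2,\ldots,\be_s\}$, since both $\sigma$ and $\sigma\smallsetminus\{\be_1\}$ are matched. It does not remove only the faces containing $\{\be_1,\ldots,\be_s\}$. That latter complex does not even support a resolution: for $q=s=3$ it is a hollow triangle, which is not acyclic in multidegree $\lcm(m_2,m_3)$. Consequently your sharpness witness $\{\be_2,\ldots,\be_q\}$ is wrong. It is matched with $\{\be_1,\ldots,\be_q\}$ and has the same lcm for $\ED$. Use $\{\be_1,\ldots,\be_q\}\smallsetminus\{\be_k\}$ with $2\le k\le s$ instead.

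For $r=2$, the facets of $\LL^2_q$ are $\mathcal B$ (the $\binom q2$ square-free vertices, with no square) and the $G_i$ (each of size $q$). They are not ``$\binom q2$ square-free vertices plus a square''. Your count, one vertex for each $t>s$ plus ``two more'' from the relation involving $m_1^2$, deletes $q-s+2$ vertices from $\mathcal B$. That would give dimension $\binom q2-(q-s+3)$, e.g. $2$ rather than $3$ for $q=4$, $s=3$. Since $\be_{11}\notin\mathcal B$, the face $\{\be_{21},\ldots,\be_{s1}\}$ forces only one omission. Your heuristic also says nothing about the type (b) critical faces, which are exactly where the maximum is attained when $q=s$.

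What is needed is a direct count from the explicit list of critical faces in \cref{t:L2q-prune}.

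\textbf{Lower bound.} When $q=s$, $\mathcal B$ itself is critical of type (b). When $q>s$, $\mathcal B\smallsetminus\{\be_{12},\be_{2(s+1)},\ldots,\be_{2q}\}$ is critical of type (a). The omitted vertices must share the index $2$, so that the type (ii) faces $\{\be_{2t_2},\ldots,\be_{st_s}\}$ with $\{t_2,\ldots,t_s\}=\{1,j\}$ are also avoided. For instance, with $q=4$, $s=3$, omitting $\be_{12}$ and $\be_{34}$ leaves $\{\be_{13},\be_{24}\}$ inside the face, so that face is not critical. Thus ``remove one vertex per index'' is not enough.

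\textbf{Upper bound.} A type (a) face inside $\mathcal B$ must omit some $\be_{k1}$ and, for each $j>s$, some $\be_{kj}$ (with $2\le k\le s$). So it has at most $\binom q2-(q-s+1)$ elements, or at most $\binom q2-1$ if $q=s$. A type (a) face inside $G_i$ omits an element of $\{\be_{2i},\ldots,\be_{si}\}\subseteq G_i$, so it has at most $q-1$ elements. A type (b) face has at most $(s-1)+\binom{s-1}{2}+(q-s)$ elements. For $q>s\ge 3$ the first bound dominates the other two; the difference with the type (b) bound is $\frac{(q-s)(q+s-5)}{2}-1\ge 0$, with equality at $q=4$, $s=3$. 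Only with this bookkeeping are both the inequality and its sharpness for $\pd(I^2)$ established.
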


The final section of the paper analyzes various examples and bounds on Betti numbers of squares of ideals using the cell complexes $\LDsq$.

\subsection*{Acknowledgements}
The research for this paper was initiated during the authors' stay at the American Institute of Mathematics (AIM), as part of the AIM SQuaRE program.  We are grateful to AIM for their warm hospitality, and for providing a stimulating  research environment.  Authors Cooper and Faridi are partially supported by NSERC Discovery Grants 2024-05444 and 2023-05929, respectively. Spiroff is supported by Simons Foundation \#584932.

\section{\bf Background} \label{background}

This section provides the background and notation that will be used
throughout the paper.
Let $R=\sfk[x_1, \ldots, x_n]$ be a polynomial ring over a
field $\sfk$ and let $I$ be an ideal of $R$. A {\bf free resolution} of \(I\) is an exact sequence of the form
\[
0 \rightarrow R^{\beta_t} \rightarrow R^{\beta_{t-1}} \rightarrow \cdots \rightarrow R^{\beta_1} \rightarrow R^{\beta_0} \rightarrow I \rightarrow 0,
\]
where \(R^{\beta_j}\) is a free \(R\)-module of rank \(\beta_j\) and \(t \in \mathbb{N}\). When \(\beta_j\) is the smallest possible rank of a free module in the \(j\)-th spot of any free resolution of \(I\) for each \(j\), the resolution is {\bf minimal}. In this case, the numbers \(\beta_j\) are invariants of \(I\) and are called the {\bf Betti numbers} of \(I\).

There has been a significant amount of interest in using combinatorics and topology to find concrete structures that describe free resolutions when $I$ is generated by monomials.  In her thesis~\cite{T}, Taylor described a free resolution of any ideal minimally generated by $q$ monomials using a special simplicial complex.  This complex can be used to obtain upper bounds for the Betti numbers.  We now recall the terminology needed to understand Taylor's ground-breaking work.

A {\bf simplicial complex}
  $\Delta$ over a vertex set $V$ is a set of subsets of $V$ that is closed under inclusion. That is, 
  if $F \in \Delta$ and $G \subseteq F$ then $G \in \Delta$. The {\bf induced subcomplex} of $\Delta$ on a subset $W \subseteq V$ is the subcomplex
  $$\Delta_W=\{ \sigma \in \Delta \mid \sigma \subseteq W\}.$$
  An element $\sigma$ of
  $\Delta$ is called a {\bf face} of $\Delta$, and the faces that are maximal with respect to 
  inclusion are called {\bf facets} of $\Delta$. A simplicial complex is
  uniquely determined by its facets.  We use the
  notation $$\Delta=\tuple{F_0,\ldots,F_q}$$ to describe a simplicial
  complex whose facets are $F_0,\ldots,F_q$.  A {\bf simplex} is a simplicial complex with a unique facet.   The {\bf dimension} of a face $F$ is defined to be $\dim(F) = |F|-1$ and the dimension of a simplicial complex $\Delta$ is the maximum of the dimensions of its faces.  The {\bf $f$-vector} of a simplicial complex $\Delta$ of dimension $d$ is the vector $(f_0, f_1, \ldots , f_{d+1})$ where $f_0=1$ and $f_i$ is the number of faces of cardinality $i$ for $i \ge 1$.

Let $I = (m_1, \ldots, m_q)$ be an ideal minimally generated by $q$ monomials.  The {\bf Taylor complex} of $(m_1,\ldots,m_q)$, which we denote by $\Taylor(I)$, is a simplex on $q$ vertices, with each vertex labeled by a monomial generator of $I$ and each face $\sigma$ labeled by a monomial $\lcm(\sigma)$,  which is the least common multiple (lcm) of the monomial labels of the vertices of $\sigma$. The {\bf lcm lattice} of $I$, LCM($I$), is the set of all least common multiples of subsets of the minimal monomial generating set of $I$ partially ordered by division.  The monomial label $\lcm(\sigma)$  of each face $\sigma$ of the Taylor complex corresponds to an element of this lattice. This structure encodes the relationships between the monomials in 
$I$, while the Taylor complex provides a geometric representation of these connections.

Although Taylor's construction is often far from
minimal, it produces a resolution of any monomial ideal $I$, giving the
upper bounds $\beta_{i}(I) \leq \binom{q}{i+1}  $ for the Betti
numbers of $I$ where $\binom{q}{i+1}$ is the number of $i$-faces of a
$q$-simplex.  However, if $r$ is a
positive integer, then the number of generators of $I^r$ generally grows
exponentially and as a result, so do the bounds on the Betti numbers
of $I^r$ given by Taylor's resolution.  Consequently, much effort has been invested in using combinatorics to find concrete structures that provide resolutions of $I$, or of $I^r$ for some $r \ge 2$, that are closer to minimal. 

Taylor's work generalizes to simplical complexes and cellular complexes. For definitions and basic background on cellular complexes, see \cite{M,OW}. Let $I$ be minimally generated by monomials
$m_1,\ldots,m_q$ and let $\Delta$ be a simplicial or cellular complex on $q$ vertices
$v_1,\ldots,v_q$. Label each vertex $v_i$ with the monomial $m_i$,
and label each face or cell of $\Delta$ with the least common multiple of
the labels of its vertices. Under certain circumstances the chain complex of $\Delta$ can be homogenized using the monomial labels to give a free resolution of $I$. In this case, we say that $\Delta$ {\bf
  supports a free resolution} of $I$ and the resulting free resolution
is called a {\bf simplicial resolution} or a {\bf cellular resolution} of $I$. We refer the reader to \cite{P} for an in-depth study of this method.

 Consider an ideal $I$ that is generated by monomials $m_1,m_2,m_3,m_4$ satisfying the divisibility relation $m_1 \mid \lcm(m_2, m_3)$. In this case, the labels of $\sigma=\{v_1,v_2,v_3\}$ and of $\sigma'=\{v_2,v_3\}$ are the same, which implies that the Taylor resolution is not minimal. \cref{f:four} depicts the Taylor simplex on four vertices and the subcomplex $\Delta$ obtained by removing the faces containing  $\sigma'$. Both support  free resolutions of $I$. However, the one on the right provides better bounds on the Betti numbers of $I$.
 We provide an example of an ideal $I$ and a simplicial complex $\Delta$ that is strictly smaller than $\Taylor(I)$ 
 that supports a resolution of $I$. 
 
  \begin{example}\label{running_ex}  Let $I = (ab, bcd, aef, cg)$. Using the notation above, $m_1 =ab, m_2 = bcd, m_3 = aef$, and $m_4 =cg$. Notice that $m_1 \mid \lcm(m_2, m_3)$. In this case, the complex $\Delta$ of \cref{f:four} can be seen to support a minimal resolution, so the bounds on the Betti numbers provided on the right side of this figure are attained. 
 \end{example}

\begin{figure}[!htbp]
{\small
$\begin{array}{ccc}
\mbox{Taylor complex of } I 
&&
\Delta \mbox{ supports resolution of } I \\
\begin{tikzpicture}
\tikzstyle{point}=[inner sep=0pt]
\node (a)[point,label=left: $m_2$] at (-0.8,0) {};
\node (b)[point,label=right:$m_1$] at (0,-1){};
\node (c)[point,label=right:$m_4$] at (0.4,1.3){};
\node (d)[point,label=right:$m_3$] at (1,0){};
\node (e)[point] at (0,0){};

\draw  [fill=gray!30] (d.center) -- (c.center) -- (b.center) -- cycle;
\draw  [fill=gray!30] (a.center) -- (c.center) -- (b.center) -- cycle;
\draw[dashed] (a) -- (d);

\filldraw [black] (a.center) circle (1pt);
\filldraw [black] (b.center) circle (1pt);
\filldraw [black] (c.center) circle (1pt);
\filldraw [black] (d.center) circle (1pt);
\end{tikzpicture} 
&&
\begin{tikzpicture}
\tikzstyle{point}=[inner sep=0pt]
\node (a)[point,label=left: $m_2$] at (-0.8,0) {};
\node (b)[point,label=right:$m_1$] at (0,-1){};
\node (c)[point,label=right:$m_4$] at (0.4,1.3){};
\node (d)[point,label=right:$m_3$] at (1,0){};
\node (e)[point] at (0,0){};

\draw  [fill=gray!30] (d.center) -- (c.center) -- (b.center) -- cycle;
\draw  [fill=gray!30] (a.center) -- (c.center) -- (b.center) -- cycle;

\filldraw [black] (a.center) circle (1pt);
\filldraw [black] (b.center) circle (1pt);
\filldraw [black] (c.center) circle (1pt);
\filldraw [black] (d.center) circle (1pt);
\end{tikzpicture}\\ 
&&\\
(\beta_0(I),\beta_1(I),\beta_2(I),\beta_3(I)) \leq (4,6,4,1)&&
(\beta_0(I),\beta_1(I),\beta_2(I))\leq (4,5,2)
\end{array}$
}
\caption{}\label{f:four}
\end{figure}

Given an ideal $I$ generated by $q$ square-free monomials, the authors in \cite{Lr} introduced a simplicial complex $\LL^r_q$ that supports a resolution of $I^r$. Except when $r=1$, in which case $\LL^1_q$ coincides with the simplex on $q$ vertices, the complex $\LL^r_q$ is considerably smaller than a simplex on $\binom{q+r-1}{r-1}$ vertices, which is the maximal number of generators of $I^r$. In order to precisely define $\LL^2_q$, we first set some notation that will be used throughout the paper.

\begin{notation}\label{n:setup}
For $q$ and $r$ positive integers, $\ba=(a_1,\ldots,a_q) \in \NN^q$, and $A \subseteq [q],$  we use the following notation, assumptions, and conventions:
 \begin{itemize} 
 \item $\be_i$  is the $i^{th}$ standard unit vector in $\RR^q$;
 \item $\Nrq =\{ a_1\be_1+\cdots + a_q\be_q  \st \sum_{i=1}^q a_i = r \}$;
    \item We write $\be_{ij} = \be_i + \be_j$ where $\be_{ii}=\be_i+\be_i$.
    \end{itemize}
\end{notation}

\begin{definition}[{\bf The simplicial complex $\LL_q^2$}~{\cite[Definition 1]{L2}}, {\cite[Proposition 4.3]{Lr}}]\label{d:L2}  

For an integer $q \geq 3$, the simplicial complex $\LL^2_q=\langle \mathcal B, G_1,\ldots,G_q \rangle$ has vertex set $\N^2_q$ and facets: 
$$
\mathcal B=\{\be_{ij} \st 1 \leq i< j \leq q \}
\qand
G_i= \{\be_{ij} \st 1 \leq j \leq q \} 
\qfor i \in [q].
$$

For $q=1$ and $q=2$ we use the same construction but note that $\mathcal{B}$ is empty for $q=1$ and is a face but not a facet of $\LL^2_q$ for $q=2$. 

When using a labeling on $\LL_q^2$ for a specific monomial ideal $I=(m_1, \ldots, ,m_q)$, we use the labeling induced from that of the Taylor complex. That is, $\be_{ij}$ is labeled by $m_im_j$. 
\end{definition}

\begin{example} \label{LL4}
We illustrate $\LL^2_4$ below in Figure \ref{f:L24}. Note that there are five facets; one simplex with six vertices in the center shaded gray and four tetrahedrons with the vertices $m_i^2, m_im_j, m_im_k, m_im_l$ for $i, j, k, l$ distinct elements in $\{1,2,3,4\}$.

\begin{figure}[!htbp]
\begin{center}
\tdplotsetmaincoords{60}{150}
\begin{tikzpicture}[tdplot_main_coords]

    \coordinate (A) at (0,0,0);
    \coordinate (B) at (1.75,0,0);
    \coordinate (C) at (3.5,0,0);
    \coordinate (D) at (0,1.75,0);
    \coordinate (E) at (0,3.5,0);
     \coordinate (F) at (1.75,1.75,0);
      \coordinate (G) at (0,0,1.75);
      \coordinate (H) at (0,0,3.5);
      \coordinate (I) at (1.75,0,1.75);
      \coordinate (J) at (0,1.75,1.75);
\draw [fill=gray!10] (B.center) -- (I.center) -- (G.center) -- (J.center)-- (D.center) -- (F.center);
      
  \draw (F) -- (B) ;
  \draw (F) -- (D);
   \draw [thick] (A) -- (B);
      \draw [thick] (A) -- (D);   
    \draw [thick] (A) -- (G);
       \draw (G) -- (H);
          \draw (G) -- (J);
          \draw (G) -- (I);
          \draw (B) -- (I);
          \draw (G) -- (B);
          \draw (B) -- (C);
          \draw (I) -- (C);
\draw (H) -- (I);
\draw (H) -- (J); 
\draw (J) -- (D);  
\draw (E) -- (J);
\draw (F) -- (J);
\draw (G) -- (F);
\draw (I) -- (F);
\draw (G) -- (D);

\draw (I) -- (J);
\draw (C) -- (F);
\draw (F) -- (E);
\draw (B) -- (D);
\draw (I) -- (D);
\draw (B) -- (J);

    \foreach \point in {A, B, C, D, E, F, G, H, I, J} \fill (\point) circle (2pt);

    \node[above right] at (A) {\tiny{$m_1^2$}};
    \node[above left] at (B) {\tiny{$m_1m_2$}};
    \node[above left] at (C) {\tiny{$m_2^2$}};
    \node[right] [label distance=-6pt]  at (D) {\tiny{$m_1m_3\ \ $}};
    \node[above right] at (E) {\tiny{$m_3^2$}};
    \node[ below] at (F) {\tiny{$m_2m_3$}};
     \node[above left] at (G) {\tiny{$m_1m_4$}};
    \node[right] at (H) {\tiny{$m_4^2$}};
    \node[left] at (I) {\tiny{$m_2m_4$}};
     \node[right] at (J) {\tiny{$m_3m_4$}};


    \draw[-] (0,0,0) -- (3.5,0,0) node[anchor=north east]{};
    \draw[-] (0,0,0) -- (0,3.5,0) node[anchor=north west]{};
    \draw[-] (0,0,0) -- (0,0,3.5) node[anchor=south]{};
\end{tikzpicture} 
\end{center}
    \caption{The labeled simplicial complex $\LL_4^2$}
    \label{f:L24}    
\end{figure}

\end{example}

If a single divisibility condition exists among the generators of $I$, then additional faces may be removed from $\Taylor(I)$ and $\LL_q^2$, in the cases of $r=1, 2$ respectively, to get a complex that supports a resolution of $I^r$. This will be proved in \cref{s:morse matchings}.  The removal of faces uses discrete Morse theory, which will be introduced in \cref{s:Morse_basics}, and relies on identifying faces that share the same lcm labels. Since a face and one of its subfaces have the same lcm label precisely when the monomial labels of the additional vertices already divide the lcm of the monomial labels of the subface, we need to understand divisibility relations among the monomial generators of $I$ and $I^2$.  We fix an order on the monomial generators of $I$ and encode divisibility relations  using the fixed indices. For example, in \cref{running_ex} we write $(1,\{2,3\})$ for $m_1 \mid \lcm(m_2,m_3)$. This notation was introduced in \cite{Divisibilities}  and is made precise below.

\begin{definition}[{\bf Divisibility relations}]
\label{d: div-rel}
Let $U=\{u_i\st i\in \Lambda\}$ denote a set   of distinct monomials indexed by a finite set $\Lambda$, so that $u_i=u_j$ implies $i=j$. We define a {\bf divisibility relation on} $U$, encoded as the pair $(b,B)\in \Lambda\times 2^\Lambda$, to be a relation of the form 
\begin{equation}\label{e:div-rel}
u_b\mid \lcm(u_i\st i\in B) \qforsome  b\in \Lambda \qand  \emptyset \ne B \subseteq \Lambda.
\end{equation} 
A relation $(b,B)$ is said to be {\bf trivial} if $b\in B$, and we say that $(c,C)$ is an {\bf extension} of $(b,B)$ if $b=c$ and $B \subseteq C$. In addition, we say a divisibility relation $(b,B)$ on $U$ is {\bf minimal} if it is non-trivial and is not an extension of any other divisibility relation on $U$.

When working with a monomial ideal $I$, by {\bf divisibility relations on $I$} we mean divisibility relations on the unique set of minimal monomial generators of $I$. 
\end{definition}

The set $\R(I)$ consisting of all divisibility relations on a monomial ideal $I$ contains the trivial relations on $I$ and is closed under extensions. Thus, in order to describe this set, it suffices to describe the set of minimal divisibility relations on $I$ since all other relations are either trivial or extensions of a minimal relation. 

To study ideals whose generators satisfy divisibility relations, we use a special class of ideals defined in \cite{Divisibilities}, called $\D$-extremal ideals. These ideals are extremal relative to a set of divisibility relations $\D$ in the sense that the ideals are as general as possible among square-free monomial ideals satisfying the divisibility relations in $\D$. To define these ideals, we start with a set of desired relations indexed by subscripts as above, and then define generators of $\ED$ in such a way that they will satisfy these relations. We recall the precise definition of $\ED$ below.

\begin{definition}[{\bf $\D$-extremal ideals}]\label{d:D-def} 
Let $q>0$ and $d\geq0$.  Define the polynomial ring $S_{[q]}$ by 
\begin{equation*}\label{e:Sq}
S_{[q]}=\sfk[y_A\st \emptyset\ne A \subseteq [q]]
\end{equation*}
and set 
$$
\D=\{(b_1,B_1),\ldots,(b_d,B_d)\} \subseteq [q]\times (2^{[q]}\ssm\{\emptyset\}) 
$$
where $\D = \emptyset$ if $d=0$.
Define 
\begin{align*}
Q(\D)&=\{A\subseteq [q] \, \st A\neq \emptyset, \text{ and for all } j\in [d],\   b_j\notin A \text{ or } A\cap B_j\ne\emptyset\}.
\end{align*}
For $i \in [q]$, we  define square-free monomials 
\[
\ed{i}={\displaystyle \prod_{\substack{A\in Q(\D)\\ i\in A}}y_A}\,.
\]
Given $\ba=(a_1, \dots, a_q)\in \Nrq$, we set 
\begin{equation}\label{eda}
\ped^\ba=\ed{1}^{a_1}\cdots \ed{q}^{a_q}.
\end{equation}
The {\bf $\D$-extremal ideal} is defined as the square-free monomial ideal in the polynomial ring $S_{[q]}$ generated by the $\ed{i}$, namely
$$
\ED=(\ed{1}, \ \ldots , \ \ed{q}).
$$
\end{definition}
Define $\E_q=\E_{q,\emptyset}$ and note that $Q(\emptyset)=\{A\subseteq [q] \, \st A\neq \emptyset\}.$ This special case was introduced in \cite{Lr} and has been studied due to its extremal properties (see for instance \cite{Scarf, extremal}).

\begin{Remark}\label{remark:generators of extremals}
When $|B_i| \geq 2$ for each $i$, $\ED$ is minimally generated  by $\ed{1}, \ \ldots , \ \ed{q}$ and $\EDr$ is minimally generated by $\{\ped^\ba \st \ba \in \Nrq\}$ by \cite[Proposition 4.1]{Divisibilities}.
\end{Remark}

For the remainder of this paper, the sets $\D$ are formed by divisibility relations on minimal generators of a square-free monomial ideal $I$, and thus we may always assume $|B_i| \ge 2$. In particular, we assume
\begin{equation}\label{e:oneOK}
\D=\{(b_1,B_1),\ldots,(b_d,B_d)\} \subseteq [q]\times (2^{[q]}\ssm\{\emptyset\}) \qwhere |B_i| \ge 2 \qforall i\in [d]. 
\end{equation}
We will also simplify notation by writing $y_{_{1}}$ in place of $y_{\{1\}}$, $y_{_{12}}$ in place of $y_{\{1,2\}}$, etc.

\begin{example} \label{mainex} 
When $q=4$, set $\D = \{(1,\{2,3\})\}$. Then $d=1$, $B = \{2,3\}$, and $b_1 = 1$. Since the set $Q(\D)$ consists of all non-empty subsets of $[4]$ except $\{1\}$ and $\{1, 4\}$, and thus the variables $y_{_{1}}$ and $y_{_{14}}$ do not appear in any generator of the ideal, the ideal $\E_{4,\D}$ in $S_{[4]}$ is generated by the monomials
$$
\begin{array}{llll}
&\ed{1}=y_{_{12}}y_{_{13}}y_{_{123}}y_{_{124}}y_{_{134}}y_{_{1234}}&
&\ed{2}=y_{_{2}}y_{_{12}}y_{_{23}}y_{_{24}}y_{_{123}}y_{_{124}}y_{_{234}}y_{_{1234}}\\
&\ed{3}=y_{_{3}}y_{_{13}}y_{_{23}}y_{_{34}}y_{_{123}}y_{_{134}}y_{_{234}}y_{_{1234}}&&
\ed{4}=y_{_{4}}y_{_{24}}y_{_{34}}y_{_{124}}y_{_{134}}y_{_{234}}y_{_{1234}}.
\end{array}
$$
Note that $\ed{1} \mid \lcm (\ed{2}, \ed{3})$.  
\end{example}

\section{\bf The divisibility relations of  ${\EDsq}$}\label{sec:ED2}

 In this section, we discuss divisibility relations on $I^2$, where $I$ is an ideal minimally generated by $q\ge 1$ square-free monomials. More precisely, if $I$ satisfies a given divisibility relation, we identify  divisibility relations that are satisfied by $I^2$. These relations will be instrumental in creating a Morse matching on $\LL_q^2$ in \cref{s:morse matchings}. 

When $I=\ED$ for some set of divisibility relations $\D$ as in \eqref{e:oneOK}, we know that the minimal monomial generators of $\ED$ satisfy the relations in $\D$ and do not satisfy additional relations other than those which can be deduced from $\D$, as seen in~\cite[Theorem~3.4]{Divisibilities}. In addition, by~\cite[Corollary~4.3]{Divisibilities} if $I$ is any ideal minimally generated by $q$ square-free monomials satisfying the relations in $\D$, then $I^2$ satisfies all relations on $\EDsq$. Thus we focus on relations satisfied by $\EDsq$. We consider two  cases: when $\D$ is empty and when $\D$ consists of one relation, in which case we may assume
\begin{equation}
\label{e:onerel}
\D=\{(1,\{2,\dots, s\})\},
\quad \mbox{which means} \quad 
 \ed{1} \mid \lcm(\ed{2}, \ldots, \ed{s}),
 \end{equation}
 where $s\geq 3$. In these two cases we have a full understanding of the divisibility relations on $\ED$. More precisely, by \cite[Theorem~3.4]{Divisibilities}, we see that when $\D=\emptyset$ there are no minimal divisibility relations on  $\ED$ and when $\D$ is as in \eqref{e:onerel}, the relation $(1,\{2,\dots, s\})$ is the only minimal divisibility relation on $\ED$. On the other hand, there are many more minimal divisibility relations on $\EDsq$. 
 In \cref{p:lcm-general} we describe a complete set of minimal divisibility relations on  $\EDsq$ in the two cases of interest.   As a first step, in \cref{p:lcm-general-new} we establish some divisibility relations that hold on $I^2$ for any square-free monomial ideal $I$, and establish two classes of additional divisibility relations on $I^2$ under the assumption that $I$ satisfies the relation \eqref{e:onerel}.

\begin{proposition}[{\bf Divisibility relations on $I^2$}]\label{p:lcm-general-new}
Let $q>0$ and let $I$ be a square-free monomial ideal minimally generated by
$m_1,\ldots,m_q$. Then $I^2$ satisfies the divisibility relations
 \begin{equation}
 \label{e:L2}
 m_am_b \mid \lcm (m_a^2,m_bm_c)  \qwhere   a,b,c\in [q]\,. 
\end{equation}
If $I$ satisfies the divisibility relation
\begin{equation}
\label{e:lcm0}
m_1 \mid \lcm(m_2,\ldots,m_s) 
\end{equation} 
where $3\le s\le q$, then $I^2$ satisfies the divisibility relation
\begin{equation}
\label{e:m2}
m_1m_j \mid \lcm(m_2m_{t_2},\ldots, m_sm_{t_s})
\end{equation}
where $j\in [s]$ and either $t_j=j\ge 2$ or $t_k \in \{1,j,k\}$ for all $2 \leq k \leq s$, and the divisibility relation
\begin{equation}
\label{e:m22}
m_1m_j \mid \lcm(m_2m_{t_2},\ldots, m_sm_{t_s}, m_u m_j)
\end{equation}
where $s<j \le q$ and either $u=j$ or
$t_k\in \{1,j,k\}$ for all $2\le k\le s$.
\end{proposition}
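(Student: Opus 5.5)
Since all monomials involved are square-free generators and their products, divisibility can be checked one variable at a time. For each variable $x$ and each $i$ set $\varepsilon_i=\deg_x m_i\in\{0,1\}$. Then $\deg_x(m_am_b)=\varepsilon_a+\varepsilon_b$, and $\deg_x\lcm$ of a collection of products is the maximum of the corresponding sums. So each claimed relation reduces to an inequality between $0/1$ quantities. The hypothesis \eqref{e:lcm0} translates to: if $\varepsilon_1=1$ then $\varepsilon_k=1$ for some $2\le k\le s$.

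For \eqref{e:L2} we need $\varepsilon_a+\varepsilon_b\le\max(2\varepsilon_a,\varepsilon_b+\varepsilon_c)$. If $\varepsilon_a=1$, the right side is at least $2$, which bounds the left. If $\varepsilon_a=0$, the left side is $\varepsilon_b\le\varepsilon_b+\varepsilon_c$.

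For \eqref{e:m2} we need $\varepsilon_1+\varepsilon_j\le\max_{2\le k\le s}(\varepsilon_k+\varepsilon_{t_k})$, and we split on $\varepsilon_1$.
\begin{itemize}
\item If $\varepsilon_1=0$, the left side is $\varepsilon_j$. When $j=1$ this is $0$. When $j\ge2$, the term with $k=j$ contributes $\varepsilon_j+\varepsilon_{t_j}\ge\varepsilon_j$.
\item If $\varepsilon_1=1$, pick $k\in\{2,\dots,s\}$ with $\varepsilon_k=1$. If $\varepsilon_j=0$ the left side is $1\le\varepsilon_k+\varepsilon_{t_k}$. So the real case is $\varepsilon_1=\varepsilon_j=1$, where we need some $k$ with $\varepsilon_k+\varepsilon_{t_k}=2$.
\begin{itemize}
\item Under the option $t_j=j\ge2$, the term $m_jm_j$ contributes $2\varepsilon_j=2$.
\item Under the option $t_k\in\{1,j,k\}$ for all $k$, the chosen $k$ has $\varepsilon_{t_k}=1$, because $\varepsilon_1=\varepsilon_j=\varepsilon_k=1$. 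Hence that term contributes $2$.
\end{itemize}
\end{itemize}
The case $j=1$ is included in this second option, since then $\{1,j,k\}=\{1,k\}$.

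For \eqref{e:m22}, with $s<j\le q$, the left side is again $\varepsilon_1+\varepsilon_j$, and the right side now also includes $\varepsilon_u+\varepsilon_j$.
\begin{itemize}
\item If $\varepsilon_1=0$, the extra term $\varepsilon_u+\varepsilon_j\ge\varepsilon_j$ covers it.
\item If $\varepsilon_1=1$ and $\varepsilon_j=0$, the relation \eqref{e:lcm0} gives some $k\le s$ with $\varepsilon_k=1$, so the maximum is at least $1$.
\item If $\varepsilon_1=\varepsilon_j=1$, we need a term equal to $2$.
\begin{itemize}
\item If $u=j$, the term $m_j^2$ gives $2$.
\item Otherwise $t_k\in\{1,j,k\}$ for all $k$, and the $k$ supplied by \eqref{e:lcm0} gives $\varepsilon_k+\varepsilon_{t_k}=2$ as before.
\end{itemize}
\end{itemize}

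The only delicate point is bookkeeping of the index conditions, particularly in \eqref{e:m2}. One must verify that the case $j\ge2$ with $t_j=j$ uses the term indexed by $k=j$, which exists since $2\le j\le s$. One must also check that the alternative condition on all the $t_k$ is exactly what makes the $k$ produced by \eqref{e:lcm0} work regardless of which $k$ it is. Everything else is a routine $0/1$ case check.
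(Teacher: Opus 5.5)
Your argument is correct and is essentially the paper's proof: both check divisibility one variable $x$ at a time, using $m_1 \mid \lcm(m_2,\ldots,m_s)$ (or the term containing $m_j$) when $x$ divides $m_1m_j$ to degree one, and using the index conditions to ensure that one of $m_km_1$, $m_k^2$, $m_km_j$, $m_j^2$ appears in the lcm when $x^2 \mid m_1m_j$. Your $0/1$ exponent notation is just a tidier bookkeeping of the same case analysis.
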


 \begin{proof} The relation in~\eqref{e:L2} appears implicitly in~\cite{L2}. We repeat the proof for completeness. Let $x$ be a variable such that $x\mid m_am_b$. Then $x\mid m_a$ or $x\mid m_b$, implying that $x$ divides $\lcm(m_a^2,m_bm_c)$.  If $x^2\mid m_am_b$, then since $m_a$ and $m_b$ are square-free, we must have $x\mid m_a$ and $x\mid m_b$, which implies that $x^2\mid m_a^2$ and hence $x^2$ divides $\lcm(m_a^2,m_bm_c)$. 

Assume \eqref{e:lcm0} holds and $x$ is a variable such that $x\mid m_1m_j$, and hence $x\mid m_1$ or $x\mid m_j$. If $x\mid m_1$, then by \eqref{e:lcm0}, we have $x \mid m_k$ for some $2\leq k \leq s$ and thus $x$ divides the lcms in \eqref{e:m2} and \eqref{e:m22}.
If $x\mid m_j$, then either $j\le s$ and $x$ divides a term of the lcm in \eqref{e:m2}, or $j>s$ and $x$ divides the last term of the lcm in \eqref{e:m22}. 

Assume now that $x^2\mid m_1m_j$. Since $m_1$ and $m_j$ are square-free, we have $x\mid m_1$, and hence $x\mid m_k$ for some $2\le k\le s$, and $x\mid m_j$. It follows that 
\[
x^2\mid m_km_1, \quad x^2\mid m_k^2, \quad x^2\mid m_km_j\quad\text{and}\quad x^2\mid m_j^2\,.
\] 
Observing that in all situations one of the monomials $m_km_1$, $m_k^2$, $m_km_j$ or $m_j^2$ is among the monomials in the lcm of \eqref{e:m2} and \eqref{e:m22}, we conclude that $x^2$ divides the lcm in either case. 
\end{proof}

We now turn our attention to the extremal ideals $\ED$, with $\D$ as in \eqref{e:oneOK}. In this case, we
take $m_i=\ed{i}$ in \cref{p:lcm-general-new}.  For convenience, we adopt \cref{n:setup}, and describe the generators of $\EDr$ using elements of $\N^r_q$, so that $\ED$ is generated by $\ped^{\be_i}=\ed{i}$
with $i\in [q]$,  and $\EDsq$ is generated by  $\ped^{\be_{ij}}=\ed{i}\ed{j}$ with $i,j\in [q]$, where $\be_i\in \N_q^1$ and $\be_{ij}\in\N_q^2$ are as defined in \cref{n:setup}.  When $\sigma \subseteq \N_q^1$ and   $\tau \subseteq \N_q^2$  
we denote
$$
\lcm(\sigma)= \lcm(\ed{a}\st \be_a\in \sigma)
\qand 
\lcm(\tau)=\lcm \big ( \ed{a}\ed{b} \st \be_{ab}\in \tau \big ).
$$ 
 For example, with this notation, we have \[
\lcm(\{\be_{11}, \be_{12}, \be_{13}\})=\lcm(\ed{1}^2,\, \ed{1}\ed{2},\, \ed{1}\ed{3}).
\]

Before investigating the divisibility relations of $\EDsq$, we discuss some preliminaries. 

\begin{lemma}\label{l:A-new} 
  Let $q>0$ and let $\D$ be as in \eqref{e:oneOK}. 
  Suppose  $\tau\subseteq \N^2_q$, $\gamma\subseteq \N^1_q$,  $A\in Q(\D)$.  Then, for $i,v\in A$ and $w\in [q]$, we have:  
  
\begin{enumerate}  
\item \label{eq:A1}
 $\ed{i}\mid \lcm(\gamma) \implies \be_a\in \gamma$\,  for some $a\in A$;
\item \label{eq:A2}
$\ed{i}\ed{v}\mid \lcm(\tau) \implies \be_{ab} \in \tau$\, for some $a,b\in A$;
\item \label{eq:A3}
 $\ed{i}\ed{w}\mid \lcm(\tau) \implies \be_{ab} \in \tau$\, for some $a\in A$, $b\in [q]$.
 \end{enumerate}
 
\end{lemma}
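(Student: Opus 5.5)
The plan is to track the single variable $y_A$, which is available because $A\in Q(\D)$. By \cref{d:D-def}, $y_A$ divides $\ed{c}$ exactly when $c\in A$. Moreover, since $\ed{c}$ is square-free, the exponent of $y_A$ in $\ed{c}\ed{d}$ equals $[c\in A]+[d\in A]$. All three statements then reduce to comparing the exponent of $y_A$ on the two sides of the assumed divisibility. The exponent of $y_A$ in an lcm is the maximum of its exponents in the listed monomials.

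For \eqref{eq:A1}, since $i\in A$, the variable $y_A$ divides $\ed{i}$, and hence divides $\lcm(\gamma)$. So $y_A$ divides some $\ed{a}$ with $\be_a\in\gamma$, which forces $a\in A$.

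For \eqref{eq:A2}, both $i,v\in A$, so $y_A^2$ divides $\ed{i}\ed{v}$; this holds also when $i=v$, giving $\ed{i}^2$. Therefore $y_A^2$ divides $\lcm(\tau)$. Since the exponent of $y_A$ in an lcm is a maximum, some single generator $\ed{a}\ed{b}$ with $\be_{ab}\in\tau$ has $y_A$-exponent $2$. This forces $a,b\in A$.

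For \eqref{eq:A3}, only $i\in A$ is guaranteed, so we only get $y_A\mid \ed{i}\ed{w}\mid\lcm(\tau)$. Thus some $\be_{ab}\in\tau$ has $y_A\mid\ed{a}\ed{b}$, meaning $a\in A$ or $b\in A$. Since $\be_{ab}=\be_{ba}$, we may relabel so that $a\in A$ and $b\in[q]$ is arbitrary.

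There is no real obstacle. The one point requiring care is that the lcm exponent is a maximum rather than a sum. This is exactly what lets us locate a single generator carrying $y_A^2$ in \eqref{eq:A2}. We also use that the $\ed{c}$ are square-free, so that exponent $2$ in $\ed{a}\ed{b}$ requires $a$ and $b$ both to lie in $A$.
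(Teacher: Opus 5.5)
Your proposal is correct and follows essentially the same route as the paper: track the variable $y_A$, which divides $\ed{c}$ exactly when $c\in A$, then use that lcm exponents are maxima and that the $\ed{c}$ are square-free to locate a single generator $\ed{a}\ed{b}$ with $\be_{ab}\in\tau$ carrying $y_A$ (or $y_A^2$). Your remarks on the case $i=v$ and on relabeling via $\be_{ab}=\be_{ba}$ in (3) make explicit two points the paper leaves implicit.
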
 

\begin{proof}
Since $i,v\in A$ and $A \in Q(\D)$, by the definition of $\ed{i}$ and $\ed{v}$,  we must have $y_A \mid \ed{i}$ and $y_A \mid \ed{v}$.

To show item (1), suppose $\ed{i}\mid \lcm(\gamma)$. Then 
 $$y_A \mid \ed{i} 
 \Longrightarrow
 y_A \mid \lcm(\gamma)
 \Longrightarrow
 y_A \mid \ed{a} \qforsome \be_a \in \gamma.$$ By the definition of $\ed{a}$, we have $a\in A$.

To see item (2), suppose $\ed{i}\ed{v}\mid \lcm(\tau)$.  Then since $y_A \mid \ed{i}$ and  $y_A \mid \ed{v}$
 $$ 
 (y_A)^2 \mid \ed{i}\ed{j}
 \Longrightarrow 
 (y_A)^2 \mid \lcm(\tau)
 \Longrightarrow (y_A)^2\mid \ed{a}\ed{b} \qforsome \be_{ab}\in \tau.
 $$
 Therefore, since $\ed{a}$ and $\ed{b}$ are both square-free,  $y_A\mid \ed{a}$  and $y_A\mid \ed{b}$, which by definition means $a,b\in A$. 
 
Finally, for item (3) suppose $\ed{i}\ed{w}\mid \lcm(\tau)$.  Now 
 $$y_A \mid \ed{i} 
 \Longrightarrow 
 y_A \mid \ed{i}\ed{w}
 \Longrightarrow 
 y_A \mid \lcm(\tau)
 \Longrightarrow y_A\mid \ed{a}\ed{b} \qforsome \be_{ab}\in \tau.
 $$
 Therefore $y_A\mid \ed{a}$  or $y_A\mid \ed{b}$, which means $a\in A$ or $b\in A$. 
\end{proof}

Before considering further the divisibilities on $\EDsq$, we state a result for the first power, in the special case where all the sets $B_i$ of $\D$ are equal.   This result can also be deduced from ~\cite[Theorem~3.4]{Divisibilities}, but doing so would require the introduction of additional notation that is not needed elsewhere. Instead we give a short proof below.  

\begin{lemma}
\label{l:specialD}
Assume $q\ge s\ge 3$, $B\subseteq [q]$ with $|B| \ge 2$, and  $\D\subseteq [q]\times 2^{[q]}$ is given by 
$$\D =\{(b,B)\colon b\in J\} \qforsome J\subseteq [q]\ssm B.$$
Assume $\sigma\subseteq \N^1_q$ and $\be_i\notin\sigma$. Then 
$\ed{i}\mid \lcm(\sigma)$
if and only if $i\in J$ and $\{\be_j\colon j\in B\}\subseteq \sigma$. 
\end{lemma}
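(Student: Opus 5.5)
We prove both directions by working with the variables $y_A$, $A\in Q(\D)$. First we describe $Q(\D)$ for this $\D$. A nonempty $A\subseteq[q]$ lies in $Q(\D)$ exactly when, for every $b\in J$, either $b\notin A$ or $A\cap B\neq\emptyset$. Equivalently,
$$Q(\D)=\{A\neq\emptyset \st A\cap J=\emptyset \ \text{ or } \ A\cap B\neq\emptyset\}.$$

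\emph{The "if" direction.} Suppose $i\in J$ and $\{\be_j\colon j\in B\}\subseteq\sigma$. It suffices to show $\ed{i}\mid\lcm(\ed{j}\colon j\in B)$. Since $\ed{i}$ is square-free, we check each variable $y_A$ dividing $\ed{i}$. Such an $A$ lies in $Q(\D)$ and contains $i\in J$, so $A\cap J\neq\emptyset$. Hence $A\cap B\neq\emptyset$; pick $j\in A\cap B$. Then $y_A\mid\ed{j}$, which divides the lcm.

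\emph{The "only if" direction.} Suppose $\be_i\notin\sigma$ and $\ed{i}\mid\lcm(\sigma)$. We use \cref{l:A-new}\eqref{eq:A1} with well-chosen sets $A\in Q(\D)$ containing $i$; each such $A$ forces some $\be_a\in\sigma$ with $a\in A$.

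First, suppose $i\notin J$. Take $A=\{i\}$. Then $A\cap J=\emptyset$, so $A\in Q(\D)$, and \cref{l:A-new}\eqref{eq:A1} gives $\be_i\in\sigma$, a contradiction. Hence $i\in J$, and in particular $i\notin B$ since $J\subseteq[q]\ssm B$.

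Now fix $j\in B$ and take $A=\{i,j\}$. This set meets $B$ (at $j$), so $A\in Q(\D)$. By \cref{l:A-new}\eqref{eq:A1}, $\be_i\in\sigma$ or $\be_j\in\sigma$. The first is excluded, so $\be_j\in\sigma$. Since $j\in B$ was arbitrary, $\{\be_j\colon j\in B\}\subseteq\sigma$.

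No step is a real obstacle. The only point needing care is the characterization of $Q(\D)$, in particular checking that $\{i\}$ and $\{i,j\}$ belong to $Q(\D)$ in the relevant cases. The hypotheses $|B|\ge2$ and $s\ge3$ are not needed for the argument itself.
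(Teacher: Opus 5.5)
Your proof is correct and follows the paper's own argument: the forward direction applies \cref{l:A-new}\eqref{eq:A1} to $A=\{i\}$ to get $i\in J$, and to $A=\{i,j\}$ with $j\in B$ to get $\{\be_j\colon j\in B\}\subseteq\sigma$, exactly as the paper does. For the converse, the paper simply cites that the generators of $\ED$ satisfy the relations in $\D$; you verify that same fact directly through the variables $y_A$, which is not a different route.
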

\begin{proof}
  Assume  $\ed{i}\mid \lcm(\sigma)$. Let $j\in B$. Then the set $A=\{i,j\}$ is in $Q(\D)$ but $\be_i \not\in \sigma$, and hence $\be_j\in \sigma$ by \cref{l:A-new}(\ref{eq:A1}). This shows  $\{\be_j\colon j\in B\}\subseteq \sigma$. If $i\notin J$, then $A=\{i\}$ is also in $Q(\D)$, and  \cref{l:A-new}(\ref{eq:A1}) implies $\be_i\in \sigma$, contradicting our assumption. The converse follows from the fact that the minimal monomial generators of $\ED$ satisfy the relations in $\D$. 
\end{proof}

The next result considers the most basic case where $\D=\emptyset$ (and hence $\ED=\E_q)$, for which we give a complete characterization of all the divisibility relations on $\EDsq$. Note that conditions (1) and (2) below are mutually exclusive when the inclusions therein are equalities. 

\begin{proposition}
\label{p:needed-div-empty}
Assume $\D=\emptyset$, $1 \leq i\le j\le q$,  $\sigma\subseteq \N^2_q$ and $\be_{ij} \notin \sigma$. Then 
\[
\ed{i}\ed{j}\mid \lcm(\sigma)
\]
if and only if there exist integers $a, b \in [q]$ such that one of the following holds: 
\begin{enumerate}
\item \label{i:n1e} $\{\be_{jj}, \be_{ia}\}\subseteq \sigma$, with $i\ne j$ and $a\notin \{i,j\}$;
\item \label{i:n2e} $\{\be_{ii}, \be_{jb}\}\subseteq \sigma$, with $i\ne j$ and $b\ne i$. 
\end{enumerate}
\end{proposition}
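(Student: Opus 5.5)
The plan is to prove the two directions separately. The \say{if} direction comes from \eqref{e:L2} of \cref{p:lcm-general-new}; the \say{only if} direction comes from \cref{l:A-new} applied to well-chosen subsets $A\in Q(\emptyset)$, which here are all nonempty subsets of $[q]$.

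For sufficiency, take $m_k=\ed{k}$ in \cref{p:lcm-general-new}. In case (\ref{i:n1e}), apply \eqref{e:L2} with the roles $(a,b,c)$ played by $(j,i,a)$. This gives $\ed{j}\ed{i}\mid\lcm(\ed{j}^2,\ed{i}\ed{a})$, which divides $\lcm(\sigma)$ because $\be_{jj},\be_{ia}\in\sigma$. Case (\ref{i:n2e}) is symmetric: apply \eqref{e:L2} with $(a,b,c)=(i,j,b)$ to get $\ed{i}\ed{j}\mid\lcm(\ed{i}^2,\ed{j}\ed{b})$. The side conditions $a\notin\{i,j\}$ and $b\neq i$ play no role here; they only serve to avoid $\be_{ij}$ itself and the overlap between the two cases.

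For necessity, assume $\ed{i}\ed{j}\mid\lcm(\sigma)$ with $\be_{ij}\notin\sigma$.

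First, take $A=\{i,j\}$ in \cref{l:A-new}(\ref{eq:A2}). This gives $\be_{ab}\in\sigma$ with $a,b\in\{i,j\}$. If $i=j$ this forces $\be_{ii}\in\sigma$, a contradiction, so in fact $i\neq j$. The surviving options are then $\be_{ii}\in\sigma$ or $\be_{jj}\in\sigma$.

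Next, take $A=\{i\}$ and $A=\{j\}$ in \cref{l:A-new}(\ref{eq:A3}), with $w$ equal to the other index. We obtain some $\be_{ia}\in\sigma$ and some $\be_{jb}\in\sigma$. Since $\be_{ij}\notin\sigma$, we may assume $a\neq j$ and $b\neq i$.

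Now split into two cases.

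\emph{Case $\be_{ii}\in\sigma$.} The element $\be_{jb}$ with $b\neq i$ gives condition (\ref{i:n2e}) directly.

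\emph{Case $\be_{ii}\notin\sigma$.} Then $\be_{jj}\in\sigma$. The element $\be_{ia}$ has $a\neq j$, and $a\neq i$ because $\be_{ii}\notin\sigma$, so $a\notin\{i,j\}$. This is condition (\ref{i:n1e}).

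The main obstacle is the bookkeeping of exclusions in the necessity direction: one must extract both a term $\be_{ia}$ and a term $\be_{jb}$, each avoiding $\be_{ij}$, and combine them correctly with the squared term. The singleton choices $A=\{i\}$ and $A=\{j\}$ in \cref{l:A-new}(\ref{eq:A3}) are what produce these, and they are available precisely because $\D=\emptyset$ makes every nonempty subset lie in $Q(\D)$.
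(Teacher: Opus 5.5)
Your proof is correct and is essentially the paper's argument. Sufficiency comes from \eqref{e:L2}, and necessity comes from \cref{l:A-new}(2) with $A=\{i,j\}$, followed by \cref{l:A-new}(3) with the singletons $\{i\}$ and $\{j\}$. The only difference is cosmetic: you extract both $\be_{ia}$ and $\be_{jb}$ at the start and split on whether $\be_{ii}\in\sigma$, which absorbs the paper's separate subcase $a=i$.
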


\begin{proof}
Assume $\ed{i}\ed{j}\mid \lcm(\sigma).$  Assume $i=j$. Then applying \cref{l:A-new}~(\ref{eq:A2}) with $A=\{i\}$ gives $\be_{ii}\in \sigma$, a contradiction. Hence $i\ne j$.

Using \cref{l:A-new}~(\ref{eq:A2}) with $A=\{i,j\}$, we see that $\be_{ii}\in \sigma$ or $\be_{jj}\in \sigma$ since $\be_{ij} \notin \sigma$. If $\be_{ii}\in \sigma$, then we apply \cref{l:A-new}~(\ref{eq:A3}) with $A=\{j\}$ to conclude that $\be_{jb}\in \sigma$ for some $b\in [q]$, where $b\ne i$ since $\be_{ij}\notin\sigma$. Thus (\ref{i:n2e}) holds. If $\be_{jj}\in \sigma$, then we apply \cref{l:A-new}~(\ref{eq:A3}) with $A=\{i\}$ to conclude that $\be_{ia}\in \sigma$ for some $a\in [q]$, where $a\ne j$ since $\be_{ij}\notin\sigma$. If $a\ne i$, then (\ref{i:n1e}) holds.  If $a=i$, then (\ref{i:n2e}) holds. 

 The converse statement follows from \eqref{e:L2} by noting that when $I=\ED$ the generators are $m_i=\ed{i}$ for $1 \le i \le q$.
\end{proof}

We now consider the case when $\D$ consists of a single relation. More precisely, we assume
\begin{align}\label{eq:Ae} 
&\D = \{(1, \{2, \ldots, s\}) \big \}  \qwhere 3 \leq s \leq q,  \\ \nonumber
&Q(\D) =
\{ A \subseteq [q] \st A \neq \emptyset, \mbox{ and either } 1\notin A \mbox{ or }  A\cap \{2,3,\dots, s\}\ne \emptyset \},
\\ \nonumber
&\ED \quad \mbox{ is generated by } \quad \ed{i}={\displaystyle \prod_{\substack{ A\in Q(\D)\\ i\in A}}y_A}\qfor i\in[q].
\end{align}

The next lemma gives a key technical ingredient for our results. 

\begin{lemma}\label{l:use}
Assume the setting of \eqref{eq:Ae}. Let $\sigma\subseteq \N^2_q$, $2\le k \le s$, and  $1\le i\le j\le q$ be such that 
$$\ed{i}\ed{j}\mid \lcm(\sigma\smallsetminus \{\be_{ij}\}).$$ 
 The following then hold: 
\begin{enumerate}
\item\label{i:o} If $i=1$, then there exists $t\in [q]$ such that $\{\be_{1t},\be_{kt}\}\cap(\sigma\smallsetminus\{\be_{1j}\})\ne \emptyset$.
\item\label{i:a} If $i=1$ and $2\le j\le s$,  then $\{\be_{11},\be_{jj}\}\cap\sigma\ne\emptyset$. 
\item \label{i:b}If $i=1=j$, then $\{\be_{1k},\be_{kk}\}\cap\sigma\ne\emptyset$.
\item \label{i:c} If $i=1$ and $j>s$, then 
$\{\be_{11},\be_{kk},\be_{jj},\be_{1k},\be_{jk}\}\cap \sigma\ne\emptyset$. 
\item \label{i:d} If $i>1$, then $i\ne j$ and $\{\be_{ii},\be_{jj}\}\cap\sigma\ne\emptyset$. 
\item \label{i:e1}  If $i>1$, then there exists $u\in [q]\smallsetminus\{j\}$ with $\be_{iu}\in \sigma$.
\item \label{i:e2}  If $j>1$, then there exists $u\in [q]\smallsetminus\{i\}$ with $\be_{ju}\in \sigma$.
\end{enumerate}
\end{lemma}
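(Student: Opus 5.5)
The plan is to apply \cref{l:A-new} with $\tau=\sigma\smallsetminus\{\be_{ij}\}$ to a carefully chosen set $A\in Q(\D)$ in each item. By \eqref{eq:Ae}, a nonempty $A$ lies in $Q(\D)$ exactly when $1\notin A$ or $A$ meets $\{2,\dots,s\}$. Each item then amounts to choosing $A$ so that the conclusion of \cref{l:A-new} forces one of the listed elements to lie in $\tau$. Throughout we use that $\be_{ab}$ is symmetric in $a,b$, and that $\tau$ excludes $\be_{ij}$.

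\textbf{Items (1)--(4), where $i=1$.}
\begin{enumerate}[label=(\arabic*)]
\item Take $A=\{1,k\}$, which lies in $Q(\D)$ since $k\in\{2,\dots,s\}$. Part (\ref{eq:A3}) of \cref{l:A-new}, with $1\in A$ and $w=j$, gives $\be_{ab}\in\tau$ with $a\in\{1,k\}$. This is $\be_{1t}$ or $\be_{kt}$ for $t=b$, and it is not $\be_{1j}$ because $\be_{1j}\notin\tau$.
\item Take $A=\{1,j\}\in Q(\D)$ and apply (\ref{eq:A2}). We get $\be_{ab}\in\tau$ with $a,b\in\{1,j\}$. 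The options are $\be_{11}$, $\be_{1j}$ and $\be_{jj}$, and $\be_{1j}$ is excluded.
\item With $i=j=1$, use $A=\{1,k\}$ and (\ref{eq:A2}). This gives one of $\be_{11}$, $\be_{1k}$, $\be_{kk}$, and $\be_{11}=\be_{ij}$ is excluded.
\item Take $A=\{1,k,j\}\in Q(\D)$ and apply (\ref{eq:A2}). This gives $\be_{ab}$ with $a,b\in\{1,k,j\}$, excluding $\be_{1j}$. The remaining possibilities are $\be_{11},\be_{kk},\be_{jj},\be_{1k},\be_{jk}$, which is exactly the listed set.
\end{enumerate}

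\textbf{Items (5)--(7), where $i>1$ or $j>1$.} If $i>1$, then every subset of $\{i,\dots,q\}$ avoids $1$, so it lies in $Q(\D)$.
\begin{enumerate}[label=(\arabic*)]
\setcounter{enumi}{4}
\item Suppose $i=j$. Then $A=\{i\}$ and (\ref{eq:A2}) force $\be_{ii}\in\tau$, which contradicts $\be_{ij}\notin\tau$; hence $i\ne j$. Next take $A=\{i,j\}$ in (\ref{eq:A2}). This yields $\be_{ii}$, $\be_{jj}$ or $\be_{ij}$, and the last is excluded.
\item Take $A=\{i\}$ and (\ref{eq:A3}) with $w=j$. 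This yields $\be_{iu}\in\tau$ for some $u$. Since $\be_{iu}\neq\be_{ij}$, we get $u\ne j$.
\item Take $A=\{j\}$, which is in $Q(\D)$ because $j>1$. Applying (\ref{eq:A3}) to $\ed{j}\ed{i}$ gives $\be_{ju}\in\tau$ with $u\ne i$.
\end{enumerate}

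\textbf{Main obstacle.} The only delicate point is the bookkeeping in each item: checking membership of the chosen $A$ in $Q(\D)$, and making sure the excluded element $\be_{ij}$ is the only candidate removed. Item (4) needs the most care. There one must note that $j>s$ does not by itself put $\{1,j\}$ in $Q(\D)$, which is why $k$ has to be added to $A$. One must also check that none of the listed elements coincides with $\be_{1j}$; this holds because $j\notin\{1,k\}$.
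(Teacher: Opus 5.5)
Your proposal is correct and follows the paper's proof essentially step for step. In each item you apply \cref{l:A-new} to $\tau=\sigma\smallsetminus\{\be_{ij}\}$ with the same choices of $A$ as the paper: $\{1,k\}$, $\{1,j\}$, $\{1,k\}$, $\{1,k,j\}$, then $\{i\}$ followed by $\{i,j\}$, and finally the singleton $\{i\}$ or $\{j\}$ for the last two items. Your remark that $\{1,j\}\notin Q(\D)$ when $j>s$, so that item (4) needs $A=\{1,k,j\}$, is the reason for the paper's choice there.
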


\begin{proof}
 \eqref{i:o} The set $A=\{1,k\}$ is in $Q(\D)$. The conclusion follows from \cref{l:A-new}~(\ref{eq:A3}), with $\tau = \sigma\smallsetminus\{\be_{1j}\}$, $a \in \{1,k\}$, and $b = t$.

\eqref{i:a} The set  $A=\{1,j\}$ is in $Q(\D)$. Using \cref{l:A-new}~(\ref{eq:A2}) with $\tau = \sigma\smallsetminus\{\be_{1j}\}$, it follows that $\be_{ab}\in\sigma\smallsetminus \{\be_{1j}\}$ for some  $a,b\in \{1,j\}$, and the only possible options are $a=b=1$ or $a=b=j$. 

\eqref{i:b} The set $A=\{1,k\}$ is in $Q(\D)$. The hypothesis and \cref{l:A-new}~(\ref{eq:A2}) with $\tau = \sigma\smallsetminus\{\be_{11}\}$, imply that $\be_{ab}\in \sigma\smallsetminus \{\be_{11}\}$ for some $a,b\in \{1,k\}$.   The only possible options are $a=1$ and $b=k$ or $a=b=k$. 

\eqref{i:c} The set $A=\{1,k,j\}$ is in $Q(\D)$. The hypothesis and \cref{l:A-new}~(\ref{eq:A2}) show that $\be_{ab}\in \sigma\smallsetminus \{\be_{1j}\}$ for some $a,b\in \{1,k,j\}$.  

\eqref{i:d} Since $i\ne 1$, the set $A=\{i\}$ is in $Q(\D)$. If $j=i$, then the hypothesis and \cref{l:A-new}~(\ref{eq:A2}) imply $\be_{ii}\in \sigma\smallsetminus \{\be_{ii}\}$, a contradiction. 
Therefore, $1<i<j$ and the set $A=\{i,j\}$ is in $Q(\D)$. The result follows as in \eqref{i:a}.

\eqref{i:e1} and \eqref{i:e2}. Let $\{v,w\}=\{i,j\}$, $A=\{v\}$ with $v>1$. Then $A\in Q(\D)$.  \cref{l:A-new}~(\ref{eq:A3}) implies that $\be_{vu}\in \sigma\smallsetminus \{\be_{ij}\}$ for some $u\in [q]$. Since $\be_{uv}\ne \be_{ij}$, it follows that $u\ne w$. 
\end{proof}

We now give a complete characterization of the divisibility relations of $\EDsq$ in terms of the faces of the Taylor complex. 

\begin{proposition}
\label{p:needed-div-T}
Assume the setting of \eqref{eq:Ae}. Let $\sigma\subseteq \N^2_q$, $1 \leq i\le j\le q$, and $\be_{ij} \notin \sigma$.  Then 
\begin{equation}
\label{e:div}
\ed{i}\ed{j}\mid \lcm(\sigma)
\end{equation}
if and only if there exist integers $t_2, \dots, t_s, a, b, u\in [q]$ such that one of the following holds: 
\begin{enumerate}
\item \label{i:n1} $\{\be_{jj}, \be_{ia}\}\subseteq \sigma$, \,  $i\ne j$,\, and \,$a\notin \{i,j\}$; 
\item \label{i:n2} $\{\be_{ii}, \be_{jb}\}\subseteq \sigma$, \, $i\ne j$,\, and \,$b\ne i$;
\item\label{i:n3} $\{\be_{2t_2},\dots, \be_{st_s}\}\subseteq \sigma$,\, $i=1$,\, $t_k\in \{1,j,k\}$\, for all \, $2\le k\le s$,  and 
\begin{enumerate}
  \item \label{i:n3a}$j=1$; \, or 
    \item \label{i:n3b}$j>s$ \, and \, $j\in \{t_2, \dots, t_s\}$; 
\end{enumerate}
\item\label{i:n4} $\{\be_{2t_2},\dots, \be_{st_s}, \be_{u j}\}\subseteq \sigma$,\, $i=1$\, and 
\begin{enumerate}
\item \label{i:n4a} $j>s$, $u>s$, $j\ne u$
     and $t_k\in \{1,k\}$  for all $2\le k\le s$; or
\item\label{i:n4b}  $j=u>1$ and $t_k> 1$ for all $2\le k\le s$. 
\end{enumerate}
\end{enumerate} 
\end{proposition}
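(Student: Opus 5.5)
The two directions are handled separately. For sufficiency, each of the four configurations is an extension of one of the relations in \cref{p:lcm-general-new} with $m_i=\ed{i}$. Cases (\ref{i:n1}) and (\ref{i:n2}) are instances of \eqref{e:L2}: take $m_j^2$ together with $m_im_a$, and $m_i^2$ together with $m_jm_b$, respectively.

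Case (\ref{i:n3}) is an instance of \eqref{e:m2}. When $j=1$ the condition $t_k\in\{1,j,k\}$ is exactly what \eqref{e:m2} requires. When $j>s$ and $j=t_l$ for some $l$, I would use the generator $\be_{lj}$ in the role of $m_um_j$ in \eqref{e:m22}, which reduces this subcase to an instance of that relation.

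Case (\ref{i:n4}) is an instance of \eqref{e:m22}. In subcase (\ref{i:n4a}) the $t_k$ lie in $\{1,k\}$. In subcase (\ref{i:n4b}) either $j>s$ and $u=j$, which \eqref{e:m22} allows, or $2\le j\le s$, where I would check directly, variable by variable as in the proof of \cref{p:lcm-general-new}, that $\be_{jj}$ together with $\be_{kt_k}$ for $t_k>1$ suffices. Since relations are closed under extension, $\ed{i}\ed{j}\mid\lcm(\sigma)$ follows in every case.

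For necessity, assume \eqref{e:div} with $\be_{ij}\notin\sigma$, and split according to whether $i>1$ or $i=1$.

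If $i>1$, then \cref{l:use}(\ref{i:d}) gives $i\ne j$ and $\be_{ii}\in\sigma$ or $\be_{jj}\in\sigma$. Items (\ref{i:e1}) and (\ref{i:e2}) supply $\be_{iu}\in\sigma$ with $u\ne j$, and $\be_{ju}\in\sigma$ with $u\ne i$. These give (\ref{i:n1}) or (\ref{i:n2}) exactly as in the proof of \cref{p:needed-div-empty}: if $\be_{iu}$ has $u=i$, we are in (\ref{i:n2}).

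If $i=1$, I would first try to land in (\ref{i:n1}) or (\ref{i:n2}). If $\be_{11}\in\sigma$ and $j>1$, then \cref{l:use}(\ref{i:e2}) gives $\be_{jb}\in\sigma$ with $b\ne 1$, which is (\ref{i:n2}). If $\be_{jj}\in\sigma$ with $j>1$ and some $\be_{1a}\in\sigma$ with $a\ne 1,j$, we get (\ref{i:n1}).

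Otherwise, for each $k\in\{2,\dots,s\}$, \cref{l:use}(\ref{i:o}) gives some $\be_{1t}$ or $\be_{kt}$ in $\sigma$, and I would use these to build the tuple $t_2,\dots,t_s$. Items (\ref{i:a}), (\ref{i:b}) and (\ref{i:c}) control which of $\be_{11},\be_{kk},\be_{jj},\be_{1k},\be_{jk}$ occur. The point is that in the residual situation we may choose $\be_{kt_k}\in\sigma$ with $t_k$ of the required form. Here I expect to need \cref{l:A-new} applied to additional sets $A\in Q(\D)$, such as $\{1,k,j\}$ and $\{k\}$. I would also use sets $\{k,w\}$, and sets $A$ containing $1$ together with several indices in $\{2,\dots,s\}$ and some $w$, to rule out choices with $t_k\notin\{1,j,k\}$.

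The main obstacle is this last step: organizing the $i=1$ residual case so that one coherent tuple $(t_2,\dots,t_s)$ lands exactly in (\ref{i:n3a}), (\ref{i:n3b}), (\ref{i:n4a}) or (\ref{i:n4b}). The natural tool is a single well-chosen $A$. I would take $A=\{1\}\cup\{k : \text{no }\be_{kt}\in\sigma\text{ with }t\in\{1,j,k\}\}\cup\{j\}$ and derive a contradiction via \cref{l:A-new}(\ref{eq:A3}), after splitting on $j=1$, $2\le j\le s$, or $j>s$.
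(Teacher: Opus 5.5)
Your sufficiency direction and your case $i>1$ match the paper's argument. Your separate check of (4b) when $2\le j\le s$ is actually more careful than the paper, which cites \eqref{e:m22} even though that relation needs $j>s$; it is the instance of \eqref{e:m2} with $t_j$ replaced by $j$.

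The genuine gap is the residual case $i=1$. This is the heart of the necessity direction, you leave it as an ``obstacle'', and the device you propose for it cannot work. Let $K$ be the set of $k$ admitting no $\be_{kt}\in\sigma$ with $t\in\{1,j,k\}$, and let $A=\{1\}\cup K\cup\{j\}$.
\begin{itemize}
\item \cref{l:A-new}(3), with $i=1$ and $w=j$, only says that some element of $\sigma$ has an index in $A$. This is automatic, since \cref{l:use}(7) already puts some $\be_{uj}$ in $\sigma$.
\item \cref{l:A-new}(2) is satisfied by any $\be_{kk'}\in\sigma$ with $k,k'\in K$.
\end{itemize}
More fundamentally, $K$ can be nonempty while the divisibility holds. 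So there is no contradiction to derive, and choices with $t_k\notin\{1,j,k\}$ cannot be ruled out. For instance, take $s=3$, $q=5$, $j=5$ and $\sigma=\{\be_{55},\be_{24},\be_{34}\}$. Then $\ed{1}\ed{5}\mid\lcm(\sigma)$ (this is (4b) with $t_2=t_3=4$ and $u=j$), yet $K=\{2,3\}$. Splitting on the size of $j$ does not help, since this example has $j>s$.

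What is missing is the right dichotomy: whether $\be_{jj}\in\sigma$. With it, everything follows from \cref{l:use} alone, with no extra sets $A$.
\begin{itemize}
\item The case $j=1$ is immediate from \cref{l:use}(3), which gives (3a) with $t_k\in\{1,k\}$.
\item For $j>1$, first use your reductions to reach $\be_{11}\notin\sigma$.
\item Suppose $\be_{jj}\in\sigma$ and no $\be_{1a}$ with $a\ne j$ lies in $\sigma$ (otherwise you are in (1)). Since also $\be_{1j}\notin\sigma$, $\sigma$ contains no $\be_{1t}$ at all. So \cref{l:use}(1) gives, for each $k$, some $\be_{kt_k}\in\sigma$ with $t_k\ne 1$ automatically. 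This is (4b) with $u=j$; no further constraint on $t_k$ is needed, nor is one true.
\item Suppose $\be_{jj}\notin\sigma$. Then \cref{l:use}(2) forces $j>s$. \cref{l:use}(4), i.e.\ \cref{l:A-new}(2) applied to $A=\{1,k,j\}$ for each $k$ separately rather than to one combined set, gives $\be_{kt_k}\in\sigma$ with $t_k\in\{1,j,k\}$. The element $\be_{uj}$ from \cref{l:use}(7) has $u\notin\{1,j\}$.
\begin{itemize}
\item If $u\le s$, reset $t_u=j$ and you land in (3b).
\item If some $t_k=j$, you are in (3b).
\item Otherwise $u>s$, $u\ne j$ and $t_k\in\{1,k\}$ for all $k$, which is (4a).
\end{itemize}
\end{itemize}
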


\begin{proof}
Assume $\ed{i}\ed{j}\mid \lcm(\sigma).$

If $i>1$, then \cref{l:use}~(\ref{i:d}) shows that $i\ne j$ and $\be_{ii}\in \sigma$ or $\be_{jj}\in \sigma$ since $\be_{ij} \notin \sigma$. Also, note that \cref{l:use}~(\ref{i:e1}) implies that there exists an element $a\in [q]\smallsetminus\{j\}$ such that $\be_{ia} \in \sigma$.  Likewise, since $j \geq i$, \cref{l:use}~(\ref{i:e2}) implies that there exists $b\in [q]\smallsetminus\{i\}$ such that $\be_{jb} \in \sigma$.  In particular,  \eqref{i:n1} or \eqref{i:n2} holds.

Assume $i=1$ and $j=1$. Then \cref{l:use}~(\ref{i:b}) implies that $\be_{1k}\in \sigma$ or $\be_{kk}\in \sigma$ for all $k$ with $2\le k\le s$. Then (\ref{i:n3a}) holds with 
\[
t_k=\begin{cases}
1 &\qif  \be_{1k}\in \sigma\\
k &\quad \text{otherwise}.
\end{cases}
\]

Assume $i=1$ and $j>1$. \cref{l:use}~(\ref{i:e2}) implies there exists $u\in [q]\smallsetminus\{1\}$ such that $\be_{u j}\in \sigma$. If $\be_{11}\in\sigma$, then \eqref{i:n2} holds. Assume now $\be_{11}\notin \sigma$. 

Suppose $\be_{jj}\in \sigma$. In this case, we may assume $u=j$. If $\be_{1a}\in \sigma$ for some $a\ne j$, then \eqref{i:n1} holds since $a \neq 1$.  Thus, assume that $\be_{1a}\notin \sigma$ for all $a\ne j$. We will show that (\ref{i:n4b}) holds. Recall that $\be_{1j} \not\in \sigma$ by hypothesis. Then \cref{l:use}\eqref{i:o} implies that for each $k\in \{2, \dots,s\}$ there exists $t_k\in [q]$ such that $\be_{kt_k}\in \sigma$. Since $t_k\ne 1$ for all $2 \le k \le s$,  we conclude that (\ref{i:n4b}) holds. 

Suppose $\be_{jj}\notin \sigma$. Since $\be_{11}\notin \sigma$ as well, \cref{l:use}\eqref{i:a} shows that we must have $j>s$.  By \cref{l:use}~(\ref{i:c}), for each $k\in \{2,\dots, s\}$ there exist  $a_k,b_k\in \{1,j,k\}$ such that $\be_{a_kb_k}\in \sigma$.  Since $\be_{11}$, $\be_{jj}$ and $\be_{1j}$ are not elements of $\sigma$, we must have  $\be_{a_kb_k}\in \{\be_{kk}, \be_{1k}, \be_{jk}\}$ for all $k\in \{2,\dots, s\}$.
Hence, for all such $k$, we have  $\be_{kt_k}\in \sigma$ for some $t_k\in \{1,j,k\}$.  Furthermore, if  $u\le s$, since we know $\be_{uj}\in\sigma$, we choose $t_u=j$. 

 If  $j\in \{t_2, \dots, t_s\}$, then (\ref{i:n3b}) holds.  Assume now $u>s$ and $j\notin \{t_2, \dots, t_s\}$. In particular, $t_k\in \{1,k\}$ for all $k$.  Since $\be_{jj}\notin \sigma$, we have $j\ne u$, and hence (\ref{i:n4a}) holds.

 The converse follows directly from \cref{p:lcm-general-new} with $m_t=\ed{t}$ for all $t\in [q]$. Indeed, if (\ref{i:n1}) or (\ref{i:n2}) holds, then \eqref{e:L2} holds, yielding the desired divisibility relation. Otherwise, by the assumptions of \eqref{eq:Ae} we have the divisibility relation \eqref{e:lcm0}. In case (\ref{i:n3}) we have \eqref{e:m2} holds when $j\le s$. When $j>s$, then by assumption $j \in \{t_2, \ldots, t_s\}$. Let $k$ be such that $j=t_k$. Then \eqref{e:m22} holds with $u=k$. 
In case (\ref{i:n4}) we have \eqref{e:m22} holds and the divisibility $\ed{i}\ed{j}\mid \lcm(\sigma)$ follows. 
\end{proof}

\begin{remark}\label{r:relns exist}
Note that the proof that each of the conditions (1)-(4) in \cref{p:needed-div-T} implies the divisibility relation \eqref{e:div} does not require the full assumptions of \eqref{eq:Ae}. Only the inclusion $\{(1, \{2, \ldots,s\})\} \subseteq \D$ is used in the proof of existence in the final paragraph above.
\end{remark}

In \cref{s:morse matchings}, we will construct a Morse matching on the complex $\LL_q^2$ of \cref{d:L2}, rather than the Taylor complex, and hence we give a version of the previous result that only uses faces of the latter complex. 

\begin{corollary}
\label{p:needed-div}
Assume the setting of \eqref{eq:Ae}. If $1 \leq i\le j\le q$, $\be_{ij} \notin \sigma$, and $\sigma\cup\{\be_{ij}\}$  is a face of $\mathbb L^2_q$, then
\[
\ed{i}\ed{j}\mid \lcm(\sigma)\,,
\]
if and only if $i=1$ and one of the following holds: 
\begin{enumerate}[label=(\roman*)]
\item \label{i}$\{\be_{2j},\be_{3j}, \dots, \be_{sj}\}\subseteq \sigma$; 
\item  \label{ii} $\{\be_{2t_2},\dots, \be_{st_s}\}\subseteq \sigma$ for some $t_2, \dots, t_s$ with $\{t_2, \dots, t_s\}=
\{1,j\}$, and $j>s$;
\item \label{iii} $\{\be_{21},\be_{31},\dots, \be_{s1}, \be_{ju}\}\subseteq \sigma$ for some $u$ with $j\ne u>s$, and $j>s$;   
\end{enumerate}
\end{corollary}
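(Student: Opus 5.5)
The plan is to derive the corollary directly from \cref{p:needed-div-T}, using the face hypothesis to eliminate or sharpen each case listed there. First I will record what it means for $\tau=\sigma\cup\{\be_{ij}\}$ to be a face of $\LL^2_q$. By \cref{d:L2}, $\tau$ is either a subset of $\mathcal B$, so it contains no $\be_{aa}$, or a subset of some $G_c$, so every element of $\tau$ has $c$ as one of its two indices. Two consequences will be used repeatedly. If $\tau$ contains some $\be_{aa}$, then $\tau\subseteq G_a$. If $\tau\subseteq G_c$ and $\be_{1j}\in\tau$ with $j\neq 1$, then $c\in\{1,j\}$.

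For the ``only if'' direction I will go through the cases of \cref{p:needed-div-T} one by one.

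\begin{itemize}
\item \textbf{Cases (1) and (2).} In case (1), $\be_{jj}\in\tau$ forces $\tau\subseteq G_j$. Then $\be_{ia}$ must involve $j$, which is impossible since $i\neq j$ and $a\neq j$. Case (2) is symmetric: $\be_{ii}\in\tau$ forces $\tau\subseteq G_i$, so $\be_{jb}$ must involve $i$, impossible since $j\neq i$ and $b\neq i$. Hence $i=1$ in all remaining cases.
\item \textbf{Case (3a).} Here $\be_{11}\in\tau$, so $\tau\subseteq G_1$. This forces $t_k=1$ for all $k$, which is (i) with $j=1$.
\item \textbf{Case (3b).} Here $j>s$ and $\be_{1j}\in\tau$.
  \begin{itemize}
  \item If $\tau\subseteq\mathcal B$, there are no squares, so $t_k\ne k$ and hence $t_k\in\{1,j\}$. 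Since $j$ occurs among the $t_k$, we get (i) if every $t_k=j$, and (ii) otherwise.
  \item If $\tau\subseteq G_c$, then $c\in\{1,j\}$. The choice $c=1$ forces all $t_k=1$, contradicting $j\in\{t_k\}$. The choice $c=j$ forces all $t_k=j$, which is (i).
  \end{itemize}
\item \textbf{Case (4a).} Here $u,j>s$ and $u\neq j$. The element $\be_{uj}$ does not involve $1$, so $\tau\not\subseteq G_1$. Since each $t_k\in\{1,k\}$ and $k\leq s<j$, the elements $\be_{kt_k}$ do not involve $j$, so $\tau\not\subseteq G_j$. Hence $\tau\subseteq\mathcal B$, which forces $t_k=1$ for all $k$. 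This is exactly (iii).
\item \textbf{Case (4b).} Here $\be_{jj}\in\sigma$, so $\tau\subseteq G_j$. Each $\be_{kt_k}$ with $k\ne j$ must then involve $j$, and $t_k>1$ gives $t_k=j$. If $j\leq s$, the index $k=j$ is covered by $\be_{jj}\in\sigma$ itself. In either case $\{\be_{2j},\dots,\be_{sj}\}\subseteq\sigma$, which is (i).
\end{itemize}

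For the ``if'' direction I will exhibit each of (i)--(iii) as an instance of \cref{p:needed-div-T}; the face hypothesis is not needed here.

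\begin{itemize}
\item Condition (i) with $j=1$ is (3a) with all $t_k=1$. Condition (i) with $j>s$ is (3b) with all $t_k=j$.
\item Condition (i) with $2\le j\le s$ contains $\be_{jj}$ (take $k=j$). It is therefore (4b) with $u=j$ and $t_k=j>1$.
\item Condition (ii) is (3b), since $\{t_2,\dots,t_s\}=\{1,j\}$ contains $j$.
\item Condition (iii) is (4a) with all $t_k=1$ and with $\be_{uj}=\be_{ju}$, where $u\neq j$ and $u>s$.
\end{itemize}

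The only delicate points are the bookkeeping in (3b) and (4b) when $j\le s$ coincides with some index $k$, and remembering that $\be_{ab}=\be_{ba}$. I expect the main obstacle to be making sure no subcase of (3b) or (4a) slips through with $\tau$ in some $G_c$ with $c\notin\{1,j\}$. This is excluded at the outset because $\be_{1j}\in\tau$ and $j\neq 1$ in those cases.
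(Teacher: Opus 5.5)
Your proposal is correct and follows essentially the same route as the paper. Both arguments reduce to the four cases of \cref{p:needed-div-T}. Both then use the observation that a face of $\LL^2_q$ containing some $\be_{cc}$ must lie in $G_c$, which rules out cases (1)--(2) and pins down the $t_k$ in (3)--(4). Both obtain the converse by recognizing (i)--(iii) as instances of that proposition. Your direct treatment of (4b) with $j>s$ and your explicit identification of (i) with $2\le j\le s$ as an instance of (4b) are only slightly more streamlined than the paper's write-up.
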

\begin{proof}
Assume $\ed{i}\ed{j}\mid \lcm(\sigma)$. 
By \cref{p:needed-div-T}, $\sigma$ satisfies (\ref{i:n1}), (\ref{i:n2}), (\ref{i:n3}) or (\ref{i:n4}) of \cref{p:needed-div-T}.   

We also assume that $\sigma\cup\{\be_{ij}\}$ is a face of $\LL^2_q$. Observe that if  $\be_{cc}\in \sigma\cup\{\be_{ij}\}$ for some $c$ then $\sigma\cup\{\be_{ij}\}\subseteq G_c$, by the definition of $\LL^2_q$ (see Definition \ref{d:L2}). In particular, if \cref{p:needed-div-T}(\ref{i:n1}) holds, then, since $\be_{jj}\in \sigma$, we must have $\sigma\subseteq G_j$ and hence $\be_{ia}\in G_j$. This is not possible, since $a\notin \{i,j\}$ and $i\ne j$. Similarly, if  (\ref{i:n2}) holds, then, since $\be_{ii}\in \sigma$, we must have $\sigma\subseteq G_i$  and hence $\be_{jb}\in G_i$. This is not possible, since $b\ne i$ and $j\ne i$. Hence $\sigma$ cannot satisfy (\ref{i:n1}) or (\ref{i:n2}), so it must satisfy (\ref{i:n3}) or (\ref{i:n4}) of \cref{p:needed-div-T}. Thus $i=1$.

Assume \cref{p:needed-div-T}(\ref{i:n3a}) holds. Since $i=1=j$, we have must have $\sigma\subseteq  G_1$. This happens only when $t_k=1$ for all $k$. In this case, \ref{i} holds with $j=1$. 

Assume \cref{p:needed-div-T}(\ref{i:n3b}) holds. Then $j>s$ and $j \in \{t_2,\ldots,t_s\}$ where $t_k \in \{1,j,k\}$ for $2 \leq k \leq s$. If $t_k=k$ for some $k\in \{2,\dots, s\}$, then $\be_{kk}\in \sigma\cup\{\be_{1j}\}$, hence $\be_{1j}\in G_k$. This implies $k=j$, a contradiction since $j>s$. Hence we must have $t_k\in \{1,j\}$ for all $k$. If $t_k=j$ for all $k$ then \ref{i} holds. If $t_k=1$ for some $k$, then \ref{ii} holds. 

Assume \cref{p:needed-div-T}(\ref{i:n4a}) holds. Then $j,u>s$, $j\ne u$, and $t_k \in \{1,k\}$ for $2 \leq k \leq s$. If $t_k=k$ for some $2\le k\le s$, then as above we must have $j=k$, a contradiction to $j>s$. Thus we must have thus $t_k=1$ for all $k$. Hence \ref{iii} holds. 

Assume \cref{p:needed-div-T}(\ref{i:n4b})  holds. Then $j=u >1$ and $t_k \ne 1$ for all $2 \le k\le s$. In this case $\be_{jj}\in \sigma$ and hence $\sigma\subseteq G_j$. If $j>s$, then this implies $t_2=\dots=t_s=j$, implying that (\ref{i:n3b}) holds, and as above, it follows that either \ref{i} or \ref{ii} holds. Now assume $j\le s$. Then $t_k=j$ for all $k\ne j$ with $2 \leq k \leq s$. Since $\be_{jj} \in \sigma$, then \ref{i} holds. 

 For the converse, observe that the inclusions in (i)--(iii) are special cases of the inclusions in (1)--(4) of \cref{p:needed-div-T}. Thus the converse follows by applying \cref{p:needed-div-T}. 
\end{proof}

We now revisit \cref{p:lcm-general-new}, adding a complete description of the minimal divisibility relations of $\EDsq$ when $\D$ contains at most one divisibility relation. In this case, one can use extensions to recover the entire set  $\R(\EDsq)$ of all divisibility relations on $\EDsq$ when $|\D| \leq 1$, extending the work done in~\cite{Divisibilities}. The relations (1)--(4) in \cref{p:lcm-general} below are restatements of those in \cref{p:lcm-general-new}, and can also be viewed as translating the conditions (1)--(4) in \cref{p:needed-div-T} into the language of divisibility relations. 

 \begin{theorem}[{\bf The  divisibility relations of  $\EDsq$}]\label{p:lcm-general}
 Let $I$ be a square-free monomial ideal minimally generated by
 $m_1,\ldots,m_q$. Then $I^2$ has the following divisibility relations.
  \begin{enumerate}
   \item\label{i:L2-0}   $m_im_j \mid \lcm (m_j^2,m_im_a)$  where $i,j,a\in [q]$ are such that $i<j$ and $a\notin\{i,j\}$; 
  \item\label{i:L2-1}   $m_im_j \mid \lcm (m_i^2,m_jm_b)$ where $i,j,b\in [q]$ are such that $i<j$ and $b\ne i$. 
\end{enumerate}
 If  $I$  satisfies the divisibility relation 
 \begin{equation*} 
 m_1 \mid \lcm(m_2,\ldots,m_s), 
 \end{equation*} then  $I^2$ also satisfies the following divisibility relations for indices $j, u, t_2, \dots, t_s\in [q]$: 
 \begin{enumerate}[resume]
 \item \label{i:lcm1} $m_1m_j\mid \lcm(m_2m_{t_2},\ldots, m_sm_{t_s})$ where $t_k\in \{1,j,k\}$ for all $2\le k\le s$ and 
\begin{enumerate}
  \item \label{i:3a}$j=1$; or 
    \item \label{i:3b}$j>s$ and $j\in \{t_2, \dots, t_s\}$; 
\end{enumerate}
 \item \label{i:lcm2}  $m_1m_j\mid \lcm(m_2m_{t_2},\ldots, m_sm_{t_s}, m_um_j)$ where
\begin{enumerate}
    \item \label{i:4a} $j>s$, $u>s$, $j\ne u$
     and $t_k\in \{1,k\}$  for all $2\le k\le s$; or
\item\label{i:4b} $j=u>1$ and $t_k> 1$ for all $2\le k\le s$. 
\end{enumerate}
 \end{enumerate}
Moreover, if $\D$ is a set of divisibility relations and $m_i=\ed{i}$ for $1\le i\le q$, then: 
 \begin{enumerate}[label=(\roman*)]
 \item 
 \label{i} When $\D=\emptyset$ (hence $\EDsq=\Eqsq$), the set of minimal divisibility relations on $\EDsq$ consists of the relations in (\ref{i:L2-0}) and (\ref{i:L2-1}).
 \item 
 \label{ii} When $\D=\{(1,\{2,\dots,s\})\}$,  the set of minimal divisibility relations on $\EDsq$ consists of the relations in (\ref{i:L2-0}), (\ref{i:L2-1}), (\ref{i:lcm1}), (\ref{i:4a}), which are minimal,  together with those relations in (\ref{i:4b}) that are not extensions of a relation of the form (\ref{i:4b}) or (\ref{i:3b}). 
 \end{enumerate}
 \end{theorem}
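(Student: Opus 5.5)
The first part of the statement, that $I^2$ satisfies (1)--(4), will be read off from \cref{p:lcm-general-new}, as in the converse direction of \cref{p:needed-div-T}. Relations (1) and (2) are instances of \eqref{e:L2}: take $(a,b,c)=(j,i,a)$, resp. $(i,j,b)$. Relation (3a) is \eqref{e:m2} with $j=1$. For (3b), pick $k$ with $t_k=j$; then \eqref{e:m22} holds with $u=k$, because $m_km_j$ is already one of the terms in the lcm. For (4a) we use \eqref{e:m22}, since $t_k\in\{1,k\}\subseteq\{1,j,k\}$. For (4b) with $j\le s$ we use \eqref{e:m2}, as $m_jm_j$ corresponds to $t_j=j$. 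For (4b) with $j>s$ we use \eqref{e:m22} with $u=j$.

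For parts (i) and (ii) I will apply \cref{p:needed-div-empty} and \cref{p:needed-div-T} to $m_i=\ed{i}$. A nontrivial relation $(\be_{ij},\sigma)$ on $\EDsq$ means exactly that $\be_{ij}\notin\sigma$ and $\ed{i}\ed{j}\mid\lcm(\sigma)$. Both propositions then say that $\sigma$ contains the index set of one of the listed relations. Hence every nontrivial relation extends a listed one, so every minimal relation is among those listed. It remains to decide which listed relations are themselves minimal. A listed relation is minimal iff no proper subset of its right-hand set $\tau$ also gives a divisibility. By the characterization, this holds iff no proper subset of $\tau$ contains the set of another listed relation with the same left side.

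For (i), the sets $\{\be_{jj},\be_{ia}\}$ and $\{\be_{ii},\be_{jb}\}$ have size two. A proper nonempty subset is a single element $\be_{cd}\ne\be_{ij}$, and it cannot contain a two-element pattern. So minimality is immediate.

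For (ii), I expect the main work to be the minimality check. The plan is to go through each listed $\tau$ and each pattern (1)--(4) of \cref{p:needed-div-T} and show that the pattern cannot fit inside a proper subset of $\tau$. For (1) and (2), note that $\tau$ contains only one square $\be_{cc}$ and it is the needed one. For (3) and (4a), the key point is that each $k\in\{2,\dots,s\}$ occurs in $\tau$ in exactly one element $\be_{kt_k}$. Every pattern (3) or (4) needs an element $\be_{k\ast}$ for each $k\in\{2,\dots,s\}$. One should also check that two indices $k$ cannot share an element: $\be_{kk'}$ with $k,k'\le s$ would require $t_k=k'\notin\{1,j,k\}$. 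Moreover, patterns (1) and (2) need $\be_{11}$ or $\be_{jj}$, which are absent except in case (3a). In case (3a) we have $i=j$, so (1) and (2) are excluded. In (4a) the extra element $\be_{uj}$ is needed: without it, pattern (3b) would require $j\in\{t_k\}$, which is false, and pattern (4) would require some $\be_{\ast j}$.

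Finally, the relations in (4b) can genuinely fail to be minimal. For example, if $t_k=j$ for all $k$, the element $\be_{jj}$ may be redundant, giving a (3b)-type subrelation when $j>s$. Likewise, several choices of $t$ can nest. This is why the statement only keeps those relations in (4b) that are not extensions of a (3b) or another (4b) relation. It then remains to observe that any such non-extension in (4b) is minimal. Any proper subrelation would be of type (1)--(4) by \cref{p:needed-div-T}. Types (1), (2), (3a) and (4a) are ruled out, as above, by the absence of $\be_{11}$ and by the requirement $t_k>1$. So a proper subrelation would be of type (3b) or (4b), which the exclusion forbids. I expect this bookkeeping for (4b) to be the main obstacle.
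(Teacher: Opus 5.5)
Your proposal is correct and follows the paper's proof. Existence is read off from \cref{p:lcm-general-new}, and you spell out the case splits for (3b) and (4b) that the paper delegates to \cref{r:relns exist}. Completeness of the list comes from \cref{p:needed-div-empty} and \cref{p:needed-div-T}. Minimality is checked by showing that no proper subset of a listed set contains a pattern from \cref{p:needed-div-T}; your observation that each $k\in\{2,\dots,s\}$ lies in a unique element of $\tau$ plays the role of the paper's count $|\sigma|\ge|\tau|$.

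Three justifications should be tightened. In the (4b) case, pattern (3a) is excluded simply because $j>1$. Pattern (4a) is excluded because $\tau$ has no element $\be_{u'j}$ with $u'\ne j$ and $u',j>s$; the condition $t_k>1$ alone does not do this. For (1) and (2) in part (ii), the clean reason is that proper subsets are singletons, which is impossible by \cref{remark:generators of extremals}.
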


 \begin{proof}
The existence of the relations \eqref{i:L2-0}  and \eqref{i:L2-1} follows directly from \cref{p:lcm-general-new}, in particular \eqref{e:L2}. The existence of the relations \eqref{i:lcm1} and \eqref{i:lcm2} also follow from \cref{p:lcm-general-new} using the arguments from the proof of \cref{p:needed-div-T} as noted in \cref{r:relns exist}. 

To show the minimality statements in \ref{i} and \ref{ii}, let $\D$ be a set of divisibility relations and assume that $m_i=\ed{i}$ for $1\le i\le q$, that is, $I=\ED$.  

First, note that by \cref{p:needed-div-empty} when $\D=\emptyset$ and by \cref{p:needed-div-T} when $\D=\{(1,\{2,\dots,s\})\}$, all minimal relations are contained in the list of relations \eqref{i:L2-0}-\eqref{i:lcm2}. Thus it suffices to show that the relations satisfy the stated minimality properties.
 
 The relations (\ref{i:L2-0}) and (\ref{i:L2-1}) are minimal by \cref{remark:generators of extremals}.  If $\D=\{(1,\{2,\dots,s\})\}$, then we are in the setting of \eqref{eq:Ae}, and hence \cref{p:needed-div-T} applies. 
 In order to show a divisibility relation from (\ref{i:lcm1}) or (\ref{i:lcm2}) is minimal, we rewrite the relation 
 as 
 \begin{equation}
 \label{e:rel}
\ed{1}\ed{j}\mid\lcm(\tau)\,,  \qwhere \tau=\begin{cases} \{\be_{2t_2}, \dots, \be_{st_s}\} &\text{if (\ref{i:lcm1}) holds}\\
\{\be_{2t_2}, \dots, \be_{st_s}, \be_{uj}\} &\text{if (\ref{i:lcm2}) holds } 
\end{cases}
\end{equation}
If the relation \eqref{e:rel} is not minimal, there exists  $\sigma \subsetneq \tau$ with $\ed{1}\ed{j} \mid \lcm(\sigma)$. Then $|\sigma| < |\tau|$ and $\sigma$ must satisfy one of the conditions (1)--(4) of  \cref{p:needed-div-T}. 

 Assume \eqref{e:rel} comes from (\ref{i:3a}). In this case, $|\tau|= s-1$ and we must have $j=1$. Hence $\sigma$ satisfies \cref{p:needed-div-T}(\ref{i:n3a}) since this is the only option that allows $j=1$. Then $|\sigma|\ge s-1=|\tau|$, a contradiction.

Assume \eqref{e:rel} comes from 
(\ref{i:3b}). In this case, $|\tau|= s-1$ and $j>s$. Observe that $\be_{jj}\notin \tau$ and $\be_{11}\notin \tau$, hence $\sigma$ cannot satisfy (\ref{i:n1}),  (\ref{i:n2}) or (\ref{i:n4b}) in \cref{p:needed-div-T}. Since any $\sigma$ satisfying (\ref{i:n4a}) in \cref{p:needed-div-T} must have an element of the form $\be_{uj}$ with $u>s$ and $j>s$ and $\tau$ does not have such an element, we see that $\sigma$ does not satisfy (\ref{i:n4a}) either. Since $j>s$ the only remaining option is that $\sigma$ satisfies \cref{p:needed-div-T}(\ref{i:n3b}). Then $|\sigma|\ge s-1 = |\tau|$, a contradiction. 

Assume \eqref{e:rel} comes from (\ref{i:4a}). In this case $|\tau|= s$ and $\be_{11}\notin \tau$ and, since $j>s$ and $j\ne u$, we also have $\be_{jj}\notin \tau$. As above, this implies that $\sigma$ cannot satisfy (\ref{i:n1}),  (\ref{i:n2}) or (\ref{i:n4b}) in \cref{p:needed-div-T}. It cannot satisfy \cref{p:needed-div-T}(\ref{i:n3a}) either, because $j>s$. If $\sigma$ satisfies  \cref{p:needed-div-T}(\ref{i:n3b}), then it contains an element of the form $\be_{kj}$ where $2\le k\le s$ and $j>s$, and no such element exists in $\tau$. Hence the only remaining option is that $\sigma$ satisfies \cref{p:needed-div-T}(\ref{i:n4a}). In this case, $|\sigma|\ge s=|\tau|$, a contradiction. 

Assume \eqref{e:rel} comes from (\ref{i:4b}) and is not an extension of a relation of the form (\ref{i:4b}) or (\ref{i:3b}). Since $\be_{11}\notin \tau$, the set $\sigma$ does not satisfy (\ref{i:n2}) in \cref{p:needed-div-T}. Since $t_k$, $j$ and $u$ are not equal to $1$, $\tau$ contains no element of the form $\be_{1a}$, and thus $\sigma$ does not satisfy \cref{p:needed-div-T}(\ref{i:n1}). Since $j>1$, $\sigma$ cannot satisfy \cref{p:needed-div-T}(\ref{i:n3a}). The only remaining option is that  $\sigma$ has the form in \cref{p:needed-div-T}(\ref{i:4a}). Then $\sigma$ contains $\be_{uj}$ with $u>s$ and $j>s$ and $u\ne j$. This is a contradiction, since no such element exists in $\tau$. 
\end{proof}

As stated in \cref{p:lcm-general}(ii), the listed relations are minimal, with the possible exception of some of the relations from~\eqref{i:4b}. Lack of minimality of a relation of type~\eqref{i:4b} can occur when choices for $t_k$ result in a relation where $m_1m_j$ divides the lcm of a set with fewer than $s$ distinct terms. Such a relation can be properly contained in a relation that avoids repeated terms. Failure of minimality can also occur when a relation satisfying (\ref{i:4b}) properly contains a relation satisfying (\ref{i:3b}).  We illustrate this in the example below.

\begin{example}
Assume $m_1\mid \lcm(m_2, \dots, m_s)$. Setting $q=5=s$ and $j=u=2$, $t_2=3$, $t_3=4$, $t_4=5$, $t_5=4$ in (\ref{i:4b}) results in the relation $m_1m_2\mid \lcm(m_2m_3, m_3m_4, m_4m_5, m_2^2)$.  However, this relation is not minimal, because it is an extension of the relation $m_1m_2\mid \lcm(m_2m_3, m_4m_5, m_2^2)$, which is obtained by setting  $j=u=2$, $t_2=3$, $t_3=2$, $t_4=5$, $t_5=4$ in (\ref{i:4b}). 

To illustrate the second type of failure of minimality, let $q=5$ and $s=4$. Setting $t_2=5=j=u$, $t_3=3$, $t_4=4$ in (\ref{i:4b}) results in the relation $m_1m_5\mid \lcm(m_2m_5, m_3^2, m_4^2, m_5^2)$. This relation is not minimal because it is an extension of $m_1m_5\mid \lcm(m_2m_5, m_3^2, m_4^2)$, which comes from  setting  $t_2=5=j$, $t_3=3$ and $t_4=4$ in (\ref{i:3b}). 

Setting $\D=\{(1,\{2,\dots,s\})\}$ and $m_i=\ed{i}$ in the above examples shows that even in the case of $\ED$ the relations of type  (\ref{i:4b}) need not be minimal. 
\end{example}

\section{ \bf Basics of Morse Theory} \label{s:Morse_basics}

Recall that our goal is to find a cell complex that supports a resolution of $\EDsq$.  We know that $\LL^2_q$ supports a resolution of $\EDsq$, which is not minimal when $\D\neq \emptyset$. When $\D \ne \emptyset$, we will use techniques from Discrete Morse Theory in \cref{s:morse matchings} to produce a smaller complex that supports a resolution of $\EDsq$. We now introduce background and definitions for Discrete Morse Theory. 

Let $V$ be a finite set and $Y \subseteq 2^V$ be a  set of subsets of $V$. We call the elements of $Y$ {\it cells}.
A cell of cardinality $n$ is called an $(n-1)$-cell. 
We define the directed graph $G_Y$ as a graph with vertex set $\{\sigma \mid \sigma \in Y\}$ and
with directed edges $E_Y$ consisting of 
$$
\sigma'\to \sigma \qwhere
\sigma,\ \sigma' \in Y, \quad 
\sigma \subseteq \sigma' \qand 
|\sigma'|=|\sigma|+1.
$$

A {\it matching} of $G_Y$ is a set $A\subseteq E_Y$ of edges of $G_Y$
with the property that each cell of $Y$ occurs in at most one edge of
$A$.  Given a matching $A$, let the directed graph $G_Y^A$ be the same graph as $G_Y$ except that the direction of the arrows in $A$ are reversed. In the edge set of $G_Y^A$, one thinks of the oriented edges in $E_Y \smallsetminus A$ as pointing down and the oriented edges $\sigma' \to \sigma$ in $A$ reversed so that in $G_Y^A$ we have $\sigma \to \sigma'$ in $A$ viewed as pointing up.
The matching $A$ is said to be {\it acyclic} if $G^A_Y$ contains no directed cycles. The cells of $Y$ that do not appear in the edges of the matching are called $A${\it -critical cells}. We recall below some standard results. 

\begin{lemma}[{\bf Cluster Lemma} {\cite[Lemma 4.2]{Jo}}]\label{clusterlemma} 
Let $Y\subseteq 2^V$, $Q$ be a poset and $\{Y_q\}_{q\in Q}$ be a partition
of $Y$ such that
$$
\text{if $\tau\in Y_q$ and $\tau'\in Y_{q'}$ satisfy $\tau' \subseteq \tau$, then $q'\le q$.}
$$
Let $A_q$ be an acyclic matching on $G_{Y_q}$ for each $q$. Then $A=\bigcup_{q\in Q}A_q$ is an acyclic matching on $G_{Y}$. 
\end{lemma}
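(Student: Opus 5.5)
The plan is to verify the two defining properties of $A=\bigcup_{q\in Q}A_q$ in turn: that it is a matching on $G_Y$, and that $G_Y^A$ has no directed cycles. The tool throughout is the compatibility hypothesis on the partition together with the fact that every edge of $G_Y$ joins two cells related by inclusion.

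First I will check that $A$ is a matching. Each $A_q$ is a set of edges of $G_{Y_q}$, and $G_{Y_q}$ is the subgraph of $G_Y$ induced on the cells of $Y_q$, so $A\subseteq E_Y$. Since the $Y_q$ partition $Y$, every cell $\sigma$ lies in exactly one block $Y_q$. Every edge of $A$ that contains $\sigma$ therefore comes from $A_q$. Because $A_q$ is a matching, there is at most one such edge.

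Next I will show acyclicity by first proving that \emph{every} directed edge of $G_Y^A$ is weakly order-decreasing in $Q$. Let $\sigma\in Y_p$ and $\sigma'\in Y_{p'}$.

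\begin{itemize}
\item A downward edge $\sigma'\to\sigma$ with $\sigma\subseteq\sigma'$ gives $p\le p'$ by the hypothesis on the partition.
\item An upward edge $\sigma\to\sigma'$ lies in some $A_q$, so both endpoints lie in the same block and $p=p'$.
\end{itemize}

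Now suppose $\sigma_0\to\sigma_1\to\cdots\to\sigma_m=\sigma_0$ is a directed cycle, with $\sigma_i\in Y_{p_i}$. Then $p_0\ge p_1\ge\cdots\ge p_m=p_0$. By antisymmetry of the partial order, all the $p_i$ are equal, so every cell of the cycle lies in a single block $Y_p$.

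Every edge of the cycle then joins two cells of $Y_p$, and is therefore an edge of $G_{Y_p}$. Its orientation in $G_Y^A$ is reversed exactly when it lies in $A$. Since both endpoints are in $Y_p$, and edges of $A_{q}$ with $q\neq p$ have their endpoints in $Y_q$, an edge between cells of $Y_p$ lies in $A$ exactly when it lies in $A_p$. So the cycle is a directed cycle in $G_{Y_p}^{A_p}$, contradicting the acyclicity of $A_p$.

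The one point that needs care is this last identification: that the orientation of an edge between two cells of $Y_p$ is the same in $G_Y^A$ as in $G_{Y_p}^{A_p}$. Once that is in place, the rest of the argument is formal.
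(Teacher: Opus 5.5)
Your proof is correct: the index in $Q$ weakly decreases along unmatched down-edges and stays constant along reversed matched edges, so a directed cycle in $G_Y^A$ is confined to a single block $Y_p$, where it is a directed cycle of $G_{Y_p}^{A_p}$. The paper does not reprove this lemma but cites it from \cite[Lemma 4.2]{Jo}, and your argument is essentially the standard proof given there.
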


\begin{lemma}{\cite[Lemma 3.3]{Morse}} \label{matching-lemma}
Let $Y\subseteq 2^V$.  If $v\in V$, then 
\begin{equation*}
A_Y^{v} = \big\{\sigma'\to \sigma\in E_Y\st v\in \sigma'   {\text{ and }} \sigma=\sigma'\smallsetminus \{v\}\big\}  
\end{equation*}
is an acyclic matching on $G_Y$. 
\end{lemma}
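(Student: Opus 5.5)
We must show that $A_Y^v$ is a matching and that it is acyclic.

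\emph{Matching.} Every edge of $A_Y^v$ has the form $\sigma'\to\sigma'\smallsetminus\{v\}$ with $v\in\sigma'$. A cell $\tau\in Y$ can occur in such an edge in only two ways. If $v\in\tau$, it can only be the top cell, and its partner is forced to be $\tau\smallsetminus\{v\}$. If $v\notin\tau$, it can only be the bottom cell, and its partner is forced to be $\tau\cup\{v\}$. In either case there is at most one edge containing $\tau$, so each cell lies in at most one edge of $A_Y^v$. Thus $A_Y^v\subseteq E_Y$ is a matching.

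\emph{Acyclicity.} Suppose $G_Y^{A}$ contains a directed cycle, where $A=A_Y^v$. Along any directed path, upward steps raise cardinality by one and downward steps lower it by one. A cycle therefore contains upward steps, and upward steps are exactly the reversed matching edges $\sigma\to\sigma\cup\{v\}$ with $v\notin\sigma$.

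Consider a cell $\sigma\cup\{v\}$ reached by such an upward step. The next step cannot be upward, since a cell containing $v$ is only ever the top of a matching edge. It also cannot be the reversed matching edge back down to $\sigma$, because that edge now points up. So the next step is a downward edge $\sigma\cup\{v\}\to\rho$ with $\rho\neq\sigma$. This forces $\rho=(\sigma\cup\{v\})\smallsetminus\{w\}$ for some $w\neq v$, so $v\in\rho$.

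Now $\rho$ contains $v$, so it cannot start an upward step. Every further step from $\rho$ is therefore downward. Moreover, as long as we remove an element other than $v$, the cell still contains $v$. Once a cell contains $v$, the only upward move out of it would require it to lack $v$, so no upward move is possible from it. The only remaining option is a downward step removing $v$ itself. The resulting cell lacks $v$, and it may be matched upward, but only back to the cell it came from.

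Making this precise, I would track the pair $(|\sigma|,[v\in\sigma])$ and show it is non-increasing in a suitable lexicographic sense along every step other than an immediate up-down pair. A cleaner route is the following standard observation. Two consecutive matched edges in a cycle would have the form $\sigma_1\to\sigma_1\cup\{v\}\to\sigma_2\to\sigma_2\cup\{v\}$. Here $\sigma_2=\sigma_1\cup\{v\}\smallsetminus\{w\}$ must avoid $v$, which forces $w=v$ and hence $\sigma_2=\sigma_1$. But that down-step is exactly the matched edge reversed, which is impossible. Hence no cycle can contain two upward steps. A cycle with a single upward step is also impossible, since one up and one down step would have to use the same edge.

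\emph{Main obstacle.} The main care point is the closing bookkeeping: checking that a down-step from $\sigma\cup\{v\}$ which removes $v$ returns exactly to $\sigma$, and that this edge is already reversed. Once that is verified, acyclicity follows immediately.
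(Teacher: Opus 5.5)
The paper does not prove this lemma; it quotes it from \cite[Lemma 3.3]{Morse}. Your final (``cleaner'') argument is the standard element-matching proof and is correct, and your matching part is fine as written. Two points in the acyclicity part need tightening.

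\textbf{Alternation must be justified.} Your displayed chain $\sigma_1\to\sigma_1\cup\{v\}\to\sigma_2\to\sigma_2\cup\{v\}$ assumes that consecutive upward steps of a cycle are separated by exactly one downward step. Ruling out the pattern up--down--up does not by itself yield ``no cycle contains two upward steps.'' The missing line is short:
\begin{itemize}
\item every step changes cardinality by exactly one, so a cycle has equally many up and down steps;
\item you already showed that no two up steps are adjacent;
\item hence up and down steps alternate, and only then does your chain describe every pair of consecutive up steps.
\end{itemize}

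\textbf{A false remark in the exploratory paragraph.} Your statement that ``the only remaining option is a downward step removing $v$'' is wrong. Every edge $\tau\to\tau\smallsetminus\{v\}$ of $E_Y$ lies in $A_Y^v$ by definition, so it is reversed in $G_Y^A$. Consequently no downward edge of $G_Y^A$ ever removes $v$.

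This observation also gives the cleanest proof, and it is the invariant you were reaching for with $(|\sigma|,[v\in\sigma])$. Up steps add $v$, down steps keep it, and a cell containing $v$ has no outgoing up step. So along any directed path, once $v$ appears it stays, and afterwards cardinality only decreases. In a putative cycle $\sigma\to\sigma\cup\{v\}\to\cdots$, every later cell therefore contains $v$. The cycle can never return to $\sigma$, which lacks $v$. This route makes the alternation fact unnecessary.
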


 In what follows, we will consider a subset $Y$ of $2^V$ and $N\subseteq Y$. Our goal is to construct a matching on $G_Y$ that pairs, as much as possible, the cells of $Y$ that contain a cell of $N$, namely the cells in the set $Y_N$ defined below. To do so, we partition $Y_N$ by grouping together the cells of $Y_N$ by the largest (in a given order) cell of $N$ they contain, and then we match within each of the partition components. \cref{t:Morse} spells out this general strategy for constructing matchings, together with a description of the critical cells.

\begin{setup}
\label{NY}
Let $N\subseteq Y\subseteq 2^V$ and 
\[
Y_N= \{\gamma\in Y\st \sigma\subseteq \gamma \text{ for some $\sigma\in N$}\}\,. 
\]
Let $\preccurlyeq$ denote a total order on $N$. For each $\sigma\in N$, define
\[
Y_\sigma=\left\{\gamma\in Y_N\colon \sigma=\max_{\preccurlyeq}\{\tau\in N\colon \tau\subseteq \gamma\}\right\}\,.
\]
Observe that the sets $Y_\sigma$ form a partition of $Y_N$ indexed by the poset $N$, with the property that if $\tau\in Y_\sigma$, $\tau'\in Y_{\sigma'}$ for $\sigma$, $\sigma'$ in $N$, and $\tau\subseteq \tau'$, then $\sigma\preccurlyeq \sigma'$.  
\end{setup}

 \begin{example}
We illustrate this setup with a simple example. We let $V=\{1,2,3\}$, $Y=2^V$ and $N=\{\{1\}, \{1,3\}, \{2,3\}\}$, where $\{1\}\preccurlyeq \{1,3\}\preccurlyeq \{2,3\}$. Then we have: 
\begin{gather*}
Y_N=N\cup \{\{1,2\}, \{1,2,3\}\} \qand \\
Y_{\{2,3\}}=\{\{1,2,3\}, \{2,3\}\}, \quad Y_{\{1,3\}}=\{\{1,3\}\},\quad  Y_{\{1\}}=\{\{1\}, \{1,2\}\}.
\end{gather*}
While we can easily match the cells of $Y_{\{2,3\}}$ and those of $Y_{\{1\}}$, notice that the single cell of  $Y_{\{1,3\}}$ will remain unmatched. For larger examples, matching within the sets $Y_\sigma$ will be done using \cref{matching-lemma} for some fixed choices of the elements $v\in V$, as described below. 
\end{example}

\begin{theorem}
\label{t:Morse}
Let $\omega\colon N\to V$ be a function with the property that $\omega(\sigma)\notin \sigma$ for all $\sigma\in N$. Assume the setting in \cref{NY}. Then for each $\sigma\in N$
\[
A_{Y_\sigma}^{\omega(\sigma)}=\{\tau'\to \tau\in E_{Y_\sigma}\colon \omega(\sigma)\in \tau'\text{ and }\tau=\tau'\smallsetminus\{\omega(\sigma)\}\}
\]
is an acyclic matching on $Y_\sigma$ and the set of critical cells consists of all the cells $\tau\in Y_\sigma$ with the property that $\tau\cup\{\omega(\sigma)\}\notin Y_\sigma$. Consequently,
\begin{equation}
\label{e:Matching}
\M\coloneq \bigcup_{\sigma\in N}A_{Y_\sigma}^{\omega(\sigma)}
\end{equation}
is an acyclic matching on $G_Y$, with the following set of critical cells: 
\[
(Y\smallsetminus Y_N)\cup \bigcup_{\sigma\in N}\left\{\tau\in Y_\sigma\colon \tau\cup\{\omega(\sigma)\}\notin Y_\sigma\right\}\,.
\]
\end{theorem}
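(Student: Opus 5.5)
The plan is to check the statement in three parts. First, $A_{Y_\sigma}^{\omega(\sigma)}$ is an acyclic matching on $G_{Y_\sigma}$. Second, its critical cells are as described. Third, the union $\M$ is an acyclic matching on $G_Y$ with the stated critical cells, which will follow from the Cluster Lemma.

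For the first part, fix $\sigma\in N$ and write $v=\omega(\sigma)$. Note that $A_{Y_\sigma}^{v}$ is exactly the set $A_{Y'}^{v}$ of \cref{matching-lemma} with $Y'=Y_\sigma$: its edges are the edges $\tau'\to\tau$ of $G_{Y_\sigma}$ with $v\in\tau'$ and $\tau=\tau'\ssm\{v\}$, both endpoints lying in $Y_\sigma$. So \cref{matching-lemma}, applied to $Y_\sigma\subseteq 2^V$, gives directly that it is an acyclic matching on $G_{Y_\sigma}$.

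For the critical cells, take $\tau\in Y_\sigma$ and split into two cases.

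\begin{itemize}
\item If $v\notin\tau$, then $\tau$ is matched exactly when $\tau\cup\{v\}\in Y_\sigma$, via the edge $\tau\cup\{v\}\to\tau$.
\item If $v\in\tau$, then $\tau$ can only be matched to $\tau\ssm\{v\}$. We claim this cell always lies in $Y_\sigma$. Since $\sigma\subseteq\tau$ and $v\notin\sigma$ (this is the hypothesis $\omega(\sigma)\notin\sigma$), we have $\sigma\subseteq\tau\ssm\{v\}$, so $\tau\ssm\{v\}\in Y_N$. Any $\rho\in N$ with $\rho\subseteq\tau\ssm\{v\}$ also satisfies $\rho\subseteq\tau$, hence $\rho\preccurlyeq\sigma$. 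Together with $\sigma\subseteq\tau\ssm\{v\}$, this gives $\max_\preccurlyeq\{\rho\in N:\rho\subseteq\tau\ssm\{v\}\}=\sigma$, so $\tau\ssm\{v\}\in Y_\sigma$. Here we are also assuming, as in the intended setting, that $\tau\ssm\{v\}\in Y$; we expect $Y$ to be closed under subsets, or at least under removal of $\omega(\sigma)$.
\end{itemize}

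So in the case $v\in\tau$ the cell is always matched, and in that case $\tau\cup\{v\}=\tau\in Y_\sigma$. Combining the two cases, $\tau$ is critical if and only if $\tau\cup\{v\}\notin Y_\sigma$.

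For the global statement, partition $Y$ into the blocks $Y_\sigma$ for $\sigma\in N$, together with one extra block $Y\ssm Y_N$. Index these blocks by the poset $N\cup\{\hat0\}$, where $\hat0$ is placed below every element of $N$ and the elements of $N$ are ordered by $\preccurlyeq$. We verify the Cluster Lemma condition, namely that whenever $\tau'\subseteq\tau$, the block of $\tau'$ is $\le$ the block of $\tau$.

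\begin{itemize}
\item If $\tau\in Y\ssm Y_N$, then $\tau$ contains no element of $N$, so neither does $\tau'$; hence $\tau'$ is also in the block $\hat0$.
\item If $\tau'\in Y_{\sigma'}$ and $\tau\in Y_\sigma$, then $\sigma'\subseteq\tau'\subseteq\tau$, so $\sigma'\preccurlyeq\sigma$ by maximality of $\sigma$. This is the observation already recorded in \cref{NY}.
\item If $\tau'$ lies in the block $\hat0$, the condition holds trivially.
\end{itemize}

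Put the empty matching on $Y\ssm Y_N$ and the matchings $A_{Y_\sigma}^{\omega(\sigma)}$ on each $Y_\sigma$. The Cluster Lemma \cref{clusterlemma} then shows that $\M$ is acyclic. Its critical cells are the union of the blockwise critical cells, which is exactly the stated set.

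The main point requiring care is the matched-from-above case in the critical-cell analysis, namely showing that $\tau\ssm\{\omega(\sigma)\}$ stays in $Y_\sigma$ (and in $Y$). That step is where the hypothesis $\omega(\sigma)\notin\sigma$ is used.
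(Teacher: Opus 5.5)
Your proposal is correct and follows the paper's own argument: you apply \cref{matching-lemma} on each $Y_\sigma$, split the critical-cell analysis into two cases using $\omega(\sigma)\notin\sigma$, and use \cref{clusterlemma} for the union $\M$. You also make explicit two points the paper's proof leaves implicit. The first is the extra block $Y\smallsetminus Y_N$ indexed by $\hat0$, which is needed so that the Cluster Lemma is applied to a genuine partition of $Y$. The second is that $Y$ must be closed under deleting $\omega(\sigma)$; this holds for the simplicial complexes where the theorem is used, and it is genuinely needed for arbitrary $Y\subseteq 2^V$.
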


\begin{proof}
Use \cref{matching-lemma} to see that $A_{Y_\sigma}^{\omega(\sigma)}$ is an acyclic matching and then \cref{clusterlemma} to see that $\M$ is an acyclic matching.

We now prove the statement about the critical cells for the matching $A_{Y_\sigma}^{\omega(\sigma)}$.  
Let $\sigma\in N$ and $\tau\in Y_\sigma$. The hypothesis that  $\omega(\sigma)\notin \sigma$ implies 
\[
 \tau\smallsetminus \{\omega(\sigma)\}\in Y_\sigma \,.
\]
Thus, if $\omega(\sigma)\in \tau$, then \[
\tau\to \tau\smallsetminus \{\omega(\sigma)\}\in A_{Y_\sigma}^{\omega(\sigma)}
\]
and hence  $\tau$ is not a critical cell. 

If $\omega(\sigma)\notin \tau$, set $\tau'\coloneq\tau\cup \{\omega(\sigma)\}$. Note that 
 \[
 \tau'\to \tau\in A_{Y_\sigma}^{\omega(\sigma)} \iff \tau'\in Y_\sigma
 \]
 and hence $\tau$ is a critical cell if and only if $\tau'\notin Y_\sigma$. 
 
 Finally, the set $Y\smallsetminus Y_N$ needs to be included in the set of $\M$-critical cells, since the definition of $\M$ does not involve any of the vertices in this set. 
\end{proof}

We will be using a special type of acyclic matching,  namely a {\it homogeneous} acyclic matching, on the set $Y$ of cells of a simplicial or cellular complex whose vertices and cells are labeled with monomials and their lcms, respectively. In this context, the matching is  homogeneous when $\lcm(\sigma')=\lcm(\sigma)$ whenever $\sigma'\to\sigma$ is an oriented edge of the matching.  When starting with a simplicial complex, we will refer to the cells $\sigma$ as faces. This will allow us to distinguish more easily between the faces of the original simplical complex and the cells of the cellular complex that results from a matching.

The following special case of results of Batzies and Welker \cite[Proposition~1.2, Theorem~1.3, Corollary 7.6]{BW} translates information given by a homogeneous acyclic matching in a simplicial complex supporting the resolution of an ideal into a smaller cell complex that supports a free resolution of the ideal.

\begin{theorem}[{\bf Resolutions from homogeneous acyclic matchings}, see \cite{BW}]\label{t:BW} 
Let $I$ be a monomial ideal and let $\Delta$ be a simplicial complex supporting a free resolution of $I$, whose vertices and faces are labeled with monomials and their lcms, respectively. If $A$ is a homogeneous acyclic matching on $G_\Delta$, then there is a cell complex $\mathcal{X}_A$ supporting a free resolution of $I$ whose $i$-cells are in one-to-one correspondence with the $A$-critical $i$-faces of $\Delta$.
Moreover, the free resolution supported on the Morse complex $\mathcal{X}_A$ is minimal provided that, whenever $\sigma$ is an $A$-critical face and $\sigma\subseteq \sigma'$ and $\dim \sigma=\dim\sigma'-1$, then $\lcm(\sigma)\ne \lcm(\sigma')$. 
\end{theorem}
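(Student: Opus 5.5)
The plan is to deduce the statement from the algebraic version of discrete Morse theory due to Batzies and Welker, checking that its hypotheses hold in our setting, and then to read off minimality from the form of the Morse differential. Let $\mathcal F_\Delta$ be the cellular (Taylor-type) free complex obtained by homogenizing the chain complex of $\Delta$ with the lcm labels. Each face $\sigma$ gives a free generator in multidegree $\lcm(\sigma)$, and the differential sends $\sigma$ to $\sum \pm \frac{\lcm(\sigma)}{\lcm(\sigma\smallsetminus v)}\,(\sigma\smallsetminus v)$. The hypothesis that $\Delta$ supports a free resolution of $I$ means $\mathcal F_\Delta$ is acyclic with $H_0=R/I$ (or $I$, after the usual shift). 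By the standard criterion (Bayer--Sturmfels), this is equivalent to the subcomplexes $\Delta_{\preceq m}=\{\sigma\st \lcm(\sigma)\mid m\}$ being acyclic over $\sfk$ for every monomial $m$.

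First, I use homogeneity. Since every matched pair $\sigma'\to\sigma$ in $A$ has $\lcm(\sigma')=\lcm(\sigma)$, the matching restricts to an acyclic matching on each $\Delta_{\preceq m}$. In the free complex, the corresponding coefficient $\frac{\lcm(\sigma')}{\lcm(\sigma)}=1$ is a unit. Batzies--Welker \cite[Proposition~1.2, Theorem~1.3]{BW} then provide a CW complex $\mathcal X_A$ whose $i$-cells are exactly the $A$-critical $i$-faces, labeled by the lcms of the corresponding faces. The Morse-theoretic collapse is compatible with these labels, so $(\mathcal X_A)_{\preceq m}$ is homotopy equivalent to $\Delta_{\preceq m}$ for every $m$. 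Hence all $(\mathcal X_A)_{\preceq m}$ are acyclic, and $\mathcal X_A$ supports a free resolution of $I$. Equivalently, one can run algebraic Morse theory directly on $\mathcal F_\Delta$: cancelling the matched generators (the unit coefficient is what allows this) yields a homotopy-equivalent free complex on the critical generators.

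For minimality, I recall from \cite[Corollary~7.6]{BW} (or the explicit Morse differential) that the coefficient of a critical $(i-1)$-cell $\tau$ in the differential of a critical $i$-cell $\sigma$ is a signed sum over gradient paths. Each path has the form $\sigma\to\sigma_1\nearrow\sigma_1'\to\cdots\to\tau$, and its monomial weight is $\frac{\lcm(\sigma)}{\lcm(\tau)}$, because the up-steps carry unit weight. The resolution is minimal exactly when no such coefficient is a nonzero scalar, i.e. when $\lcm(\sigma)=\lcm(\tau)$ never occurs with nonzero coefficient.

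The main obstacle is that the stated hypothesis only compares a critical face with the faces covering it, not with arbitrary critical faces reached by gradient paths. I would argue as follows. Take a gradient path from $\sigma$ ending at a critical face $\tau$. Its last step is a down-step $\rho'\to\tau$ with $\tau\subseteq\rho'$ and $\dim\tau=\dim\rho'-1$. The hypothesis applied to the critical face $\tau$ gives $\lcm(\tau)\ne\lcm(\rho')$, so $\lcm(\tau)$ strictly divides $\lcm(\rho')$, and along the path labels only decrease weakly. Therefore $\lcm(\tau)\neq\lcm(\sigma)$, every differential entry lies in the maximal ideal, and the resolution on $\mathcal X_A$ is minimal.
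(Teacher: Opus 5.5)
Your proposal is correct and follows the same route as the paper, which gives no independent proof and simply invokes Batzies--Welker \cite[Proposition~1.2, Theorem~1.3, Corollary~7.6]{BW}. Your gradient-path argument for minimality is a valid unpacking of that corollary: the last step into the critical target $\tau$ is a down-step from some $\rho'\supseteq\tau$ of one higher dimension, so the hypothesis makes $\lcm(\tau)$ a proper divisor of $\lcm(\rho')$, and $\lcm(\rho')$ divides $\lcm(\sigma)$. Applied instead to the first down-step $\sigma\to\sigma\smallsetminus\{v\}$, the same argument also justifies the variant the paper actually checks in the proofs of \cref{t:L1q-prune} and \cref{t:minimality}, namely $\lcm(\sigma)\ne\lcm(\sigma\smallsetminus\{v\})$ for every critical $\sigma$ and every $v\in\sigma$.
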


Note that in \cref{t:BW} above, when the resolution supported on $\mathcal{X}_A$ is minimal,  the Betti numbers $\beta_i(I)$ are given by the number of $A$-critical $i$-faces of $\Delta$, since such faces are in one-to-one correspondence with the $i$-cells of $\mathcal{X}_A$. Thus, the projective dimension $\pd(I)$ is the dimension of the largest $A$-critical face of $\Delta$. 

\begin{remark}
\label{r:BW}
The differential of the cell complex $\mathcal {X}_A$ is described in \cite[Lemma~7.7]{BW}, and it involves the concept of gradient paths, which will come up later. We observe here that if the set $\Gamma$ of $A$-critical faces of $\Delta$ is a simplicial complex, then it can be seen that the differential of the cell complex $\mathcal {X}_A$ described in \cite[Lemma~7.7]{BW} coincides with the differential of the homogenized simplicial complex of $\Gamma$, up to the identifying faces of $\Gamma$ with cells of $\mathcal{X}_A$. See also \cite[Proposition~5.2]{CK24}.
\end{remark}

In the setting of \cref{t:BW}, where the elements of $Y$ are faces of a simplicial complex, we describe an ordering on the cells of the Morse complex
 $\mathcal{X}_A$ in terms of gradient paths, following \cite{BW}.  A
 gradient path of length $n$  in the graph $G^A_Y$ is a directed path
$$\mathcal P \colon \sigma_0\to \dots\to \sigma_n$$ from $\sigma_0$ to $\sigma_n$; see, e.g.,
\cite[p.~165]{BW}.  For faces $\sigma, \tau$ of $Y$, the set of all
gradient paths in $G^A_Y$ from $\sigma$ to $\tau$ is denoted by $\text{GradPath}_A(\sigma,\tau)$.

If $\sigma, \tau$ are $A$-critical
faces of the simplicial complex $\Delta$ with $\dim(\sigma)=\dim(\tau)-1$, let $\sigma_A$ and $\tau_A$ be the corresponding cells in $\mathcal{X}_A$. We write $\sigma_A \leq \tau_A$ when $\sigma_A$ is contained in the closure of $\tau_A$. 
Batzies and Welker \cite[Proposition~7.3]{BW} proved: 
\begin{equation}\label{e:cells}
\sigma_A \leq \tau_A \iff 
  \text{GradPath}_A(\tau,\sigma) \neq \emptyset.
\end{equation}
Observe that in this setting, an inclusion $\sigma_A \subset \tau_A$ corresponds to a gradient path of length one.

\section{\bf Homogeneous Morse matchings for $\D \ne \emptyset$}\label{s:morse matchings}
For an ideal generated by $q$ monomials $m_1, \ldots, m_q$, the simplicial
complex $\mathbb{L}_q^r$ provides an upper bound for simplicial complexes
that support a free resolution of $I^r$~(\cite{Lr}). When $r = 1$,
$\mathbb{L}_q^1$ is the Taylor simplex $\mathbb{T}_q^1$; when $r = 2$,
$\mathbb{L}_q^2$ is defined in \cref{d:L2}. The question we
pursue in this section is whether, with the extra information
\[
  m_1 \mid \operatorname{lcm}(m_2, \ldots, m_s),
\]
we can always replace $\mathbb{L}_q^1$ and $\mathbb{L}_q^2$ with smaller
complexes.

To address this, we describe a structure supporting a resolution of $I^2$
when $I$ is generated by square-free monomials $m_1, \ldots, m_q$ such that
$m_1 \mid \operatorname{lcm}(m_2, \ldots, m_s)$. For $s = 3$, these
structures are illustrated by the smaller sucomplexes of $\LL^2_q$ in  \cref{f:L2-box} when $q = 3$ and in
\cref{f:two-triangles-edge} when $q = 4$. The results of this section will be used to
explain, in particular, why these subcomplexes support a resolution of $I^2$ when
$m_1 \mid \operatorname{lcm}(m_2, m_3)$. Our main tool for this purpose
will be discrete Morse theory, which will allow us to delete the extra
faces of $\mathbb{L}_q^2$ in an organized fashion.

\begin{theorem}[{\bf Pruning $\LL^1_q = \TT^1_q$ using $m_1\mid \lcm(m_2,\dots, m_s)$}]\label{t:L1q-prune} 
Suppose $q\ge s\ge 3$ and $I$ is minimally generated by the square-free monomials $m_1,m_2,\ldots,m_q$ such that $$m_1 \mid \lcm(m_2,\dots, m_s).$$ 
Let $\Gamma$ be the simplicial complex consisting of all faces of $\TT^1_q$ that do not contain the face $\sigma=\{\be_2, \dots, \be_s\}$. Then $\Gamma$ supports a free resolution of $I$. 

Furthermore, this resolution is minimal when $I=\ED$ where 
$$\D = \big \{(1, \{2, \ldots, s\}) \big \} \cup \big \{(b, \{2, \ldots, s\})\colon b\in J \big \} \qforsome J\subseteq \{s+1,\dots, q\}$$
where $J$ could be the empty set.
\end{theorem}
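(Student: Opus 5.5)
Let $V=\{\be_1,\dots,\be_q\}$ and $Y=\TT^1_q=2^V$, with $N=\{\sigma\}$ where $\sigma=\{\be_2,\dots,\be_s\}$. We apply \cref{t:Morse} with $\omega(\sigma)=\be_1$, which is allowed since $\be_1\notin\sigma$. Here $Y_N=Y_\sigma$ is the set of all faces containing $\sigma$. The matching $\M=A^{\be_1}_{Y_\sigma}$ pairs $\tau\cup\{\be_1\}\to\tau$ for every $\tau\supseteq\sigma$ with $\be_1\notin\tau$. Since $\tau\cup\{\be_1\}$ contains $\sigma$ whenever $\tau$ does, every cell of $Y_\sigma$ is matched. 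Thus the critical cells are exactly the faces in $Y\smallsetminus Y_N$, namely the faces not containing $\sigma$, which is $\Gamma$. The matching is homogeneous: $m_1\mid\lcm(m_2,\dots,m_s)$ divides $\lcm(\tau)$ whenever $\sigma\subseteq\tau$, so $\lcm(\tau\cup\{\be_1\})=\lcm(\tau)$. Since the Taylor simplex supports a resolution of $I$, \cref{t:BW} gives a Morse complex $\mathcal X_\M$ supporting a free resolution of $I$ whose cells correspond to the faces of $\Gamma$.

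The next step is to identify $\mathcal X_\M$ with $\Gamma$ itself. The set $\Gamma$ of critical faces is closed under taking subsets, so it is a simplicial complex. By \cref{r:BW}, the differential of $\mathcal X_\M$ is then the homogenized simplicial differential of $\Gamma$, so $\Gamma$ supports the resolution. As a sanity check, one can also argue directly that $\Gamma$ is an induced-type subcomplex whose restrictions $\Gamma_{\le m}$ to faces with labels dividing $m$ are acyclic, but the Morse route suffices.

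For minimality with $I=\ED$, \cref{t:BW} requires that for every critical face $\tau\in\Gamma$ and every $\tau'=\tau\cup\{\be_i\}$ with $\be_i\notin\tau$, we have $\lcm(\tau)\ne\lcm(\tau')$, that is, $\ed{i}\nmid\lcm(\tau)$. This holds whether or not $\tau'$ lies in $\Gamma$. Here we use \cref{l:specialD}, taking $B=\{2,\dots,s\}$ and the set $J\cup\{1\}\subseteq[q]\smallsetminus B$ in place of $J$. That lemma says $\ed{i}\mid\lcm(\tau)$ with $\be_i\notin\tau$ forces $\sigma\subseteq\tau$, which is impossible for $\tau\in\Gamma$. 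Hence no label equalities occur and the resolution is minimal.

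The main obstacle is the identification of the Morse complex with $\Gamma$ as a simplicial resolution, but this is handled by \cref{r:BW}. Beyond that, the only care needed is in matching the hypotheses of \cref{l:specialD}, where $1\in J\cup\{1\}$ and the sets $B$ coincide.
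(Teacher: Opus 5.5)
Your proposal is correct and follows the paper's proof essentially step for step. You use the same ingredients:
\begin{itemize}
\item \cref{t:Morse} with $N=\{\sigma\}$ and $\omega(\sigma)=\be_1$;
\item homogeneity from $m_1\mid\lcm(\sigma)$;
\item \cref{t:BW} together with \cref{r:BW} to identify the Morse complex with $\Gamma$;
\item \cref{l:specialD} for minimality.
\end{itemize}
The one difference is in the minimality check. You verify that no face of $\Gamma$ has the same label as a face obtained by adding one vertex, which is the condition exactly as stated in \cref{t:BW}. The paper instead verifies that no face of $\Gamma$ has the same label as one of its codimension-one subfaces. Both are valid sufficient conditions, and both come down to the same application of \cref{l:specialD}. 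Your explicit replacement of $J$ by $J\cup\{1\}$ there is the precise way to meet that lemma's hypothesis.
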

  \begin{proof}

We use \cref{t:Morse}, with $V=\N_q^1$, $Y=\TT^1_q$, and $N=\{\sigma\}$, and $\M$ as defined in \eqref{e:Matching}. Since $N$ is a one-element set, we use the trivial total order.
Then $Y_N=\{\gamma\in \TT^1_q\colon \sigma\subseteq \gamma\}=Y_\sigma$. We define $\omega(\sigma)=\be_1$.  Observe that if $\gamma\in Y_\sigma$, then $\gamma\cup\{\be_1\}\in Y_\sigma$, and hence 
$$\M = \{ \tau' \rightarrow \tau \st \be_1 \in \tau' \in Y_{\sigma} \qand \tau = \tau' \smallsetminus \{\be_1\}\}. $$
The set of $\mathcal M$-critical faces given by \cref{t:Morse} is $\TT^1_q\smallsetminus Y_N=\Gamma$. 

Noting that 
$m_1\mid \lcm(\sigma)$, we have that $\M$ is a homogeneous matching on the face poset of $\TT^1_q$ labeled with the generators of $I$. \cref{t:BW} gives a cell complex, whose cells are in one-to-one correspondence with the faces of $\Gamma$, that supports a free resolution of $I$. Moreover, since $\Gamma$ is a simplicial complex, it follows that $\Gamma$ supports a free resolution of $I$, see \cref{r:BW}. 

Now assume $I=\ED$ with $\D$ as in the statement.   According to \cref{t:BW}, to show that the free resolution supported on $I$ is minimal we need to show  that $\lcm(\tau)\ne \lcm(\tau\smallsetminus \{u\})$ for all $\tau\in \Gamma$ and all vertices $u\in \tau$. Indeed, assume $\tau=\{\be_{j_1},\dots, \be_{j_t}\}$ and $\e_{\D,j_1}\mid \lcm(\e_{\D,j_2}, \dots, \e_{\D,j_t})$. Then \cref{l:specialD} gives that $j_1\in J$ and $\{2,3,\dots, s\}\subseteq \{j_2, \dots, j_t\}$. Recall that $\sigma =\{\be_2,\be_3,\dots, \be_s\}$ and no face of $\Gamma$ contains $\sigma$. But $\tau=\{\be_{j_1},\dots, \be_{j_t}\}$ contains $\sigma$, a contradiction. 
\end{proof}

Two examples of ideals for which the divisibilities have the form above follow. Note that although the ideals are more general than $\ED$, computations in Macaulay2~\cite{M2} indicate that in both cases the resolution supported on $\Gamma$ is still minimal.

\begin{example}\label{running_ex2} Returning to \cref{running_ex}, set $I_1 = (ab, bcd, aef, cg)$. Notice that $m_1 \mid \lcm(m_2, m_3)$. Applying \cref{t:L1q-prune} (with $J=\emptyset$), shows that a free resolution of $I_1$ is supported on the simplicial complex illustrated in \cref{f:four}, which consists of all faces of $\TT^1_q$ that do not contain the face $\sigma=\{\be_2,\be_3\}$.
\end{example}

 \begin{example} \label{two relations} (Two divisibility relations). Let $I_2=(ab, bcd, aef, ce)$. Set $m_1 =ab; m_2=bcd; m_3=aef; m_4 = ce$.  Then $m_1$ and $m_4$ divide $\lcm(m_2,m_3)$ as in Theorem \ref{t:L1q-prune}. Specifically, we have $q=4$, $\D = \{(1,\{2,3\}), (4,\{2,3\})\}$, and $J = \{4\}$. 
 Applying \cref{t:L1q-prune} shows that a free resolution of $I_2$ is also supported on the simplicial complex  which consists of all faces of $\TT^1_q$ that do not contain the face $\sigma=\{\be_2,\be_3\}$. Thus both $I_2$ and the ideal $I_1$ of \cref{running_ex2} have resolutions supported on the same complex (using different monomial labels), and it is straightforward to verify that the resolutions are minimal.

 Note that using Macaulay2 \cite{M2}, one can easily verify that ${I_1}^2$ and ${I_2}^2$ have different Betti numbers, and thus one complex cannot support minimal resolutions of both. We will revisit this in \cref{s:geometric}. 
\end{example}

We have now reached the point where we are able to apply the Morse matching techniques from \cref{s:Morse_basics} to the square of an ideal. Since this proof is rather technical, we follow it up with \cref{ex:MorseL2}, which describes the Morse matching explicitly. The reader may find it useful to consult this example while reading through the proof.

  \begin{theorem}[{\bf Pruning $\LL^2_q$ using one divisibility relation}]\label{t:L2q-prune} 
Suppose $q\ge s\ge 3$ and $\delta=(1, \{2,\ldots, s\})$ is a divisibility relation. Then there is an acyclic matching $\M_{q,\delta}$ on $\LL^2_q$ such that the set of $\M_{q,\delta}$-critical faces consists of all those 
$\tau\in\LL^2_q$ satisfying either one of the conditions below: 
\begin{enumerate}
\item[(a)] 
\label{type a}$\tau$ does not contain any face of the form 
\[
\{\be_{2t_2}, \dots, \be_{st_s}\} \qwith \{t_2, \dots, t_s\}=\begin{cases}
\{1,j\} &\text{with $s<j\le q$} \qquad \text{or} \\
\{j\} &\text{with $j\in [q]$;} 
\end{cases}
\]
\item[(b)] 
\label{type b}$\tau=\{\be_{21},\be_{31},\dots, \be_{s1}\}\cup\gamma\cup \gamma'$ with 
$$\emptyset\ne\gamma\subseteq 
\{\be_{ik}\colon  1<i<k\le s\}\qand \gamma'\subseteq \{\be_{1(s+1)}, \dots, \be_{1q}\}.$$
\end{enumerate}
\end{theorem}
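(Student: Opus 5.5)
The plan is to apply \cref{t:Morse} with $V=\N^2_q$, $Y=\LL^2_q$, and a carefully chosen set $N\subseteq \LL^2_q$, total order $\preccurlyeq$, and function $\omega$. Acyclicity of $\M_{q,\delta}$ then comes for free from \cref{t:Morse}. Homogeneity, which is needed later for \cref{t:BW}, will follow from \cref{p:needed-div}: each pair in the matching adds a vertex $\be_{1j}$ to a face already containing a set listed in \cref{p:needed-div}(i)--(iii).

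For $N$ I would take the faces of $\LL^2_q$ of the form $\sigma=\{\be_{2t_2},\dots,\be_{st_s}\}$ with $\{t_2,\dots,t_s\}=\{j\}$ for $j\in[q]$, or $\{t_2,\dots,t_s\}=\{1,j\}$ for $j>s$. I would set $\omega(\sigma)=\be_{1j}$, so that $\omega(\sigma_1)=\be_{11}$ for $\sigma_1=\{\be_{21},\dots,\be_{s1}\}$. These are exactly the configurations of \cref{p:needed-div}(i),(ii), and $\be_{1j}\notin\sigma$ in each case.

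A first check shows this $N$ alone is not enough. A face $\tau=\sigma_1\cup\{\be_{ab}\}$ with $s<a<b$ lies in $\mathcal B$, cannot absorb $\be_{11}$, and yet is not of type (b). This is precisely the configuration in \cref{p:needed-div}(iii). So I would also put into $N$ the faces $\sigma_1\cup\{\be_{ab}\}$ with $s<a<b\le q$, each with $\omega=\be_{1a}$, which is homogeneous by \cref{p:needed-div}(iii).

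For the total order, I would put $\sigma_1$ first (smallest), then the sets $\sigma_1\cup\{\be_{ab}\}$, then the mixed sets with $\{t_k\}=\{1,j\}$, and finally the sets with $\{t_k\}=\{j\}$. Within each group I would order by $j$ (respectively by $(a,b)$), to be fixed once the case analysis shows what is needed. The faces not in $Y_N$ are exactly those of type (a), so it remains to show that a face $\tau\in Y_\sigma$ with $\tau\cup\{\omega(\sigma)\}\notin Y_\sigma$ is of type (b).

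There are two ways this can fail. The first is that $\tau\cup\{\be_{1j}\}$ is not a face of $\LL^2_q$. Recall that faces are subsets of $\mathcal B$ or of some $G_c$; $\be_{11}$ forces containment in $G_1$, and $\be_{jj}$ forces containment in $G_j$. The second is that adding $\be_{1j}$ creates a containment of a $\preccurlyeq$-larger element of $N$. I would go through the classes of $\sigma$ one at a time:
\begin{itemize}
\item For $\sigma=\{\be_{2j},\dots,\be_{sj}\}$ with $j\le s$, the face lies in $G_j$, and adding $\be_{1j}\in G_j$ stays in $G_j$.
\item For $j>s$ both kinds of $\sigma$ lie in $\mathcal B$ or $G_j$, and adding $\be_{1j}$ is harmless.
\item In each of these cases I must verify that no larger element of $N$ appears. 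Adding $\be_{1j}$ only introduces index $1$ paired with $j$, which is not of the form $\be_{k t}$ with $2\le k\le s$ unless $j\le s$, and that case has to be handled by the order.
\item For $\sigma_1$, the failure occurs exactly when $\tau\not\subseteq G_1$, i.e. $\tau\subseteq\mathcal B$ contains some $\be_{ik}$ with $i,k>1$. Maximality of $\sigma_1$ among elements of $N$ contained in $\tau$ excludes $\be_{kj}$ with $k\le s<j$ (which would create a mixed set) and $\be_{ab}$ with $s<a<b$ (which would create the new sets). What is left is $\tau=\sigma_1\cup\gamma\cup\gamma'$ with $\emptyset\ne\gamma\subseteq\{\be_{ik}:1<i<k\le s\}$, which is type (b).
\end{itemize}

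The main obstacle is this critical-cell bookkeeping. In particular I must ensure that the sets $\sigma_1\cup\{\be_{ab}\}$ and the mixed sets do not leave extra unmatched faces, for instance when adding $\be_{1a}$ produces another mixed set or another $\be_{ab'}$-set that is larger. If that happens, I would first try reordering, ranking by the largest index $j$ (or $a$) appearing. If that is insufficient, I would replace $\omega$ by a choice of the smallest available index.
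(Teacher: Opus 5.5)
Your construction is essentially the paper's. You use the same three families in $N$: the sets $\{\be_{2j},\dots,\be_{sj}\}$, the mixed sets with $\{t_2,\dots,t_s\}=\{1,j\}$ and $j>s$, and the sets $\{\be_{21},\dots,\be_{s1},\be_{ab}\}$ with $s<a<b$. You use the same rule $\omega(\sigma)=\be_{1j}$; for the third family your $\be_{1a}$ works as well as the paper's choice of the larger index. Only the total order differs. The paper puts the whole first family before the mixed sets and those before the third family. You put $\sigma_1=\{\be_{21},\dots,\be_{s1}\}$ at the bottom and the rest of the first family at the top.

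What your proposal leaves open is the step you call the main obstacle. It goes through with your choices as they stand, so no reordering or change of $\omega$ is needed.
\begin{itemize}
\item Every vertex of an element of $N$ either has an index in $\{2,\dots,s\}$ or has both indices larger than $s$. So a vertex $\be_{1j}$ with $j=1$ or $j>s$ (in particular your $\be_{1a}$) lies in no element of $N$.
\item If $2\le j\le s$, then $\be_{jj}\in\tau$ forces $\tau\cup\{\be_{1j}\}\subseteq G_j$. Since $s\ge 3$, the only element of $N$ inside $G_j$ is $\{\be_{2j},\dots,\be_{sj}\}$.
\end{itemize}
Hence adding $\omega(\sigma)$ never creates a new element of $N$; this is the first claim in the paper's proof. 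So $\tau\in Y_\sigma$ is critical exactly when $\tau\cup\{\omega(\sigma)\}\notin\LL^2_q$. For your third family this never happens, since $\tau\subseteq\mathcal B$ ($\be_{21}$ and $\be_{ab}$ share no index) and $\be_{1a}\in\mathcal B$. Combined with your bullets, it happens only for $\sigma=\sigma_1$ and $\tau\not\subseteq G_1$. The order enters only through the requirement that $\sigma_1$ precede the mixed sets and the third family, and both orders satisfy it.

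To close the argument, also record the converse.
\begin{itemize}
\item A type (b) face contains no $\be_{kj}$ with $2\le k\le s<j$, no $\be_{ab}$ with $a,b>s$, and no $\be_{cc}$. So $\sigma_1$ is the only element of $N$ it contains, and the face lies in $Y_{\sigma_1}$.
\item Adding $\be_{11}$ to it gives a non-face, because $\gamma\ne\emptyset$ is disjoint from $G_1$.
\item $Y\smallsetminus Y_N$ is exactly type (a), because each set in your third family contains $\sigma_1$.
\end{itemize}
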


\begin{proof}
We will use our general Morse theorem, \cref{t:Morse}, with $V=\N_q^2$, $Y=\LL^2_q$, and $N$ equal to the set of faces $\sigma$ of the following three distinct types: 
\begin{enumerate}[label=(\roman*)]
\item \label{i}$\sigma=\{\be_{2j},\be_{3j}, \dots, \be_{sj}\}$ for some $j\in [q]$; 
\item  \label{ii} $\sigma=\{\be_{2t_2},\dots, \be_{st_s}\}$ for some $t_2, \dots, t_s$ with $\{t_2, \dots, t_s\}=
\{1,j\}$, and $j>s$;
\item \label{iii} $\sigma=\{\be_{21},\be_{31},\dots, \be_{s1}, \be_{ju}\}$ for some $u$ with $j> u>s$
\end{enumerate}
and $Y_N= \{\gamma\in Y\colon \sigma\subseteq \gamma \text{ for some $\sigma\in N$}\}$.
We define a total order on $N$ in such a way that 
\[
\text{ type (i)}\prec   \text{ type (ii)}\prec  \text{ type (iii)}\,.
\]
It is not relevant for the proof how faces of the same type compare to each other, so we do not need to be more precise in defining this order. Observe that the integer $j$ in (i)\,--\,(iii) is uniquely determined by $\sigma$, so we will write $j_{\sigma}$ when we want to emphasize this dependence. 

We now define $\omega\colon N\to V$
by taking $\omega(\sigma)=\be_{1j}$ in all three cases, where $j=j_\sigma$. Note that in each of the three cases, $\be_{1j} \not\in \sigma$ and $\sigma \cup \{\be_{1j}\} \in \LL^2_q$. 
To see the latter, first recall that the facets of $\LL^2_q$ are of the form $\mathcal{B}$ or $G_a$ with $a \in [q]$ (see \cref{d:L2} for the notation).
In case~(i) if  $j\le s$, then $\sigma \subset G_j$ and $\be_{1j} \in G_j$, so $\sigma \cup \{\be_{1j}\} \subseteq G_j$ as well. If $j>s$, then $\sigma \subseteq \mathcal{B}$ where $\mathcal{B}$ is the set of all vertices that are square-free, meaning of the form $\be_{ab}$ with $a\ne b$. Since $j>s$ we have $\be_{1j} \in \mathcal{B}$ as well, so $\sigma \cup \{\be_{1j}\} \subseteq \mathcal{B}$ as desired. In cases (ii) and (iii), we again have $j>s$ which, together with the conditions on $t_i$ in case (ii) and the conditions on $u$ in case (iii), implies $\sigma \subseteq \mathcal{B}$ and thus the proof follows as in case (i). 

 We now describe the critical faces for the matching in \cref{t:Morse}, with the above input. According to the theorem, there are two types of critical faces: 
\begin{itemize}
\item Faces in $\LL^2_q\smallsetminus Y_N$; namely, faces in $\LL^2_q$ that do not contain any faces of type (i), (ii) or (iii) above. 
\item Faces $\tau$ in $Y_N$ satisfying $\tau\cup\{\be_{1j}\}\notin Y_\sigma$, where  $\sigma\in N$ is such that $\tau\in Y_\sigma$, and $j=j_\sigma$. 
\end{itemize}
The faces described in the first bullet are exactly the ones described in (a) in the statement, noting that any face of type (iii) contains a face of type (i). We need to show that the faces described in the second bullet above are precisely the ones described in part (b). There are two cases to consider for $\tau\in Y_\sigma$ satisfying $\tau\cup\{\be_{1j}\}\notin Y_\sigma$: 
\begin{enumerate}
\item $\tau\cup\{\be_{1j}\}\notin \LL^2_q$;
\item $\tau\cup\{\be_{1j}\}\in \LL^2_q\smallsetminus Y_\sigma$. 
\end{enumerate}

\begin{claim}
For any $\sigma\in N$, there are no faces $\tau\in Y_\sigma$ satisfying (2). 
\end{claim}

To prove the claim, let $\sigma\in N$ and $\tau\in Y_\sigma$, and assume (2) holds. The fact that $\tau\cup\{\be_{1j}\}\in \LL^2_q\smallsetminus Y_\sigma$ implies that there exists $\eta\in N$ with $\sigma\prec\eta$ such that $\tau\cup\{\be_{1j}\}\in Y_{\eta}$. Observe that we must have $\be_{1j}\in \eta$. 

If $\sigma$ is of type (ii) or (iii), then $j>s$. However, note that, when $j>s$, we have $\be_{1j}\notin \eta$ for all $\eta\in N$, a contradiction.

Assume $\sigma=\{\be_{2j}, \be_{3j}, \dots, \be_{sj}\}$ is of type (i). If $j>s$, then use the same argument as above to reach a contradiction. If $2\leqslant j\leqslant s$, then $\be_{jj}\in \sigma$. Since $\sigma\subseteq \tau$, we conclude $\be_{jj}\in \tau$.  Since $\tau\in \LL^2_q$, we have thus 
\[
\be_{1j}\in \eta\subseteq \tau\cup\{\be_{1j}\}\subseteq \{\be_{1j},\be_{2j}, \dots, \be_{qj}\}=G_j\,.
\]
However, there are no faces $\eta\in N$ that satisfy $\be_{1j}\in \eta\subseteq G_j$ , a contradiction. It remains to treat the case $j=1$. Then we must have $\be_{11}\in \eta$. However, there are no faces in $N$ that contain $\be_{11}$, a contradiction. Thus all faces must be of the form (1). 

\begin{claim}
Let $\tau\in Y_\sigma$ for some $\sigma\in N$.  Then $\tau$ satisfies (1) if and only if
\begin{equation}
\label{e:tau}
\tau=\{\be_{12},\be_{13},\dots, \be_{1s}\}\cup\gamma\cup \gamma'\qwith\emptyset\ne \gamma\subseteq \{\be_{ik}\colon  1<i<k\le s\}\text{ and } \gamma'\subseteq \{\be_{1(s+1)}, \dots, \be_{1q}\}
\end{equation}
\end{claim}

To prove the claim, assume $\tau$ satisfies \eqref{e:tau}. Note that $\sigma\subseteq \tau$ and $\sigma\in N$. If $\sigma$ is of type (ii) or type (iii), then both $j,u>s$, so $\be_{kj}, \be_{ju} \not\in \gamma \cup \gamma'$ when $2 \leq k \leq s$. Thus the only option for $\sigma$ is $\sigma=\{\be_{12}, \dots, \be_{1s}\}$, and hence $j=1$. Then $\omega(\sigma) = \be_{11}$ and if $\tau\cup\{\be_{11}\}\in \LL^2_q$, then $\tau\cup\{\be_{11}\}\subseteq G_1$, but $\gamma \ne \emptyset$, and $\gamma \subset \tau$ but $\gamma \cap G_1 = \emptyset$, a contradiction. Thus $\tau \cup \{\be_{11} \} \not\in \LL_q^2$.

Conversely, assume that $\tau\cup\{\be_{1j}\}\notin \LL^2_q$.  Since any face contained in $\mathcal B$ is contained in $\LL^2_q$, we must have that either $\tau\not\subseteq \mathcal B$ or $\be_{1j}\not\in \mathcal B$. 

If $\be_{1j}\notin \mathcal B$, and hence $j=1$, , then  $\sigma=\{\be_{12},\be_{13},\dots, \be_{1s}\}$ is a type (i) face of $N$. Since $\sigma\subseteq \tau$ and $\tau\in \LL^2_q$, but $\tau\cup\{\be_{11}\}\notin \LL^2_q$, we must have $\tau\subseteq \mathcal B$ and $\tau$ must contain an element $\be_{ik}$ with $1<i<k\le q$. If $i>s$, then $\rho=\{\be_{12}, \dots, \be_{1s}, \be_{ik}\}$ is a type (iii) face of $N$, and it is contained in $\tau$. Since $\sigma\prec\rho$ in our order, this contradicts the fact that $\tau\in Y_\sigma$. If $i\le s$ and $k>s$, then $\rho=\{\be_{ik}\}\cup\left(\{\be_{12}, \be_{13}, \dots, \be_{1s}\}\smallsetminus \{\be_{1i}\}\right)$ is a type (ii) face of $N$ contained in $\tau$. However, $\sigma\prec \rho$, contradicting $\tau\in Y_\sigma$. Thus we must have $k\le s$ for all $\be_{ik}\in \tau$ with $1<i<k$, and there is at least one such element in $\tau$. This element is in $\gamma$, so $\gamma \ne \emptyset$. Now if for $i<k$ we have $\be_{ik}\in \tau\smallsetminus \sigma$, then as above if $i>1$ then $k\le s$ and $\be_{ik} \in \gamma$. If $i=1$, then $\be_{1k} \in \sigma$ or $\be_{1k} \in \gamma'$. We conclude that $\tau$ must have the form in \eqref{e:tau}.  

It remains to consider the case when $\tau\not\subseteq\mathcal B$. In this case, we must have  $\be_{ii}\in \tau$ for some $i$. Since $\tau\in \LL^2_q$, we must have $\tau\subseteq G_i$. Since $\sigma\subseteq \tau$ and $\sigma\in N$, we must have $\sigma=\{\be_{2i},\be_{3i}, \dots, \be_{si}\}$ and $j=i$. Then $\tau\cup \{\be_{1j}\}$ is contained in $G_i$, and hence it is in $\LL^2_q$, a contradiction. This finishes the proof of Claim 2, and thus the description of $\M_{q,\delta}$-critical faces. 
\end{proof}

\begin{remark}
\label{r:arrowsinG1}
Let $\M_{q,\delta}$ be the matching defined in \cref{t:L2q-prune} and let $N$ and $\omega\colon N\to \mathcal N_q^2$ be as defined in the proof of this result. If  $\tau\in G_1$ or $\tau'\in G_1$ , then 
\begin{equation}
\tau'\to \tau\in \M_{q,\delta}\iff \be_{11}\in \tau'\,,\,\,\tau=\tau'\ssm \{\be_{11}\}\qand \{\be_{21},\be_{31}, \dots, \be_{s1}\}\subseteq \tau\,.
\end{equation}

Indeed, as described in \cref{t:Morse}, the fact that $\tau'\to \tau$ is in the matching means that $\tau, \tau'\in Y_\sigma$ for some $\sigma\in N$ with $\omega(\sigma)\in \tau'$  and  $\tau=\tau'\ssm \{\omega(\sigma)\}$. The statement follows from the observation that if $\alpha\in G_1$ and $\sigma\in N$, then  $\alpha\in Y_\sigma$ if and only if  $\sigma=\{\be_{21},\be_{31}, \dots, \be_{s1}\}$ (in which case $\omega(\sigma)=\be_{11}$) and $\sigma\subseteq \alpha$. 
\end{remark}

\begin{example}
\label{ex:MorseL2}
Suppose in \cref{t:L2q-prune} that $q=4$ and $s=3$, which means
$$I=(m_1,m_2,m_3,m_4)
\qand 
m_1 \mid \lcm(m_2, m_3).$$ 
    We illustrate the matching in \cref{t:L2q-prune} which removes faces of $\LL^2_4$,  leading to the Morse complex $\LDsq$ in this particular case.  To simplify notation, below we will write $ij$ instead of $\be_{ij}$. 
    The set $N$ consists of elements of types (i), collected in the first set of the union below, and type (ii), collected in the second set of the union. In this case, there are no elements of type (iii). In particular, 
\begin{align*}
N = &\big\{\{2j,3j\} \st j \in [4]\big\} 
\,\cup \,
\big\{\{2t_2,3t_3\} \st  \{t_2, t_3\}= \{1, 4\} \big\} \\
=&\{\{12,13\}, \{22,23\}, \{23,33\}, \{24,34\}, \{12,34\}, \{13,24\}\}
\end{align*}
where the second display fixes the order $\preccurlyeq$ on $N$ so that $\text{ type (i)}\prec   \text{ type (ii)}$ as required by the proof of \cref{t:L2q-prune}.  With $Y = \LL^2_4$ (see \cref{f:L24}),  recall that 
\[
Y_N= \big \{\gamma\in Y\st \sigma\subseteq \gamma \text{ for some $\sigma\in N$} \big \}
\qand
Y_\sigma=\big\{\gamma\in Y_N\colon \sigma=\max_{\preccurlyeq}\{\tau\in N\colon \tau\subseteq \gamma\}\big\}\,.
\]
  Note that 
  $$
  Y_{\{12,13\}}=\big\{
  \{12,13\}, \{12,13,14\}, \{12,13,23\}, 
  \{12,13,11\}, \{12,13,14,11\}, \{12,13,14,23\}
  \big\}.
  $$ 
  More specifically, sets like $\{12, 13, jj\}$ for $j \neq 1$ are not faces in $\LL^2_4$, hence they do not belong to $Y_{\{12,13\}}$, along with any sets that contain them.  Next, although $\sigma = \{12,13\}$ is contained in $\gamma = \{12,13,24\}$, $\sigma \neq \max_{\preccurlyeq}\{\tau\in N\colon \tau\subseteq \gamma\}$. The maximum in this case is $\{13,24\}$. Likewise, for $\gamma = \{11,12,13,34\}$ the maximum is $\{12,34\}$. Therefore, no face belonging to $Y_{\{12,13\}}$ can contain the elements $24$ or $34$.  This argument leaves the subsets shown above, which can be verified to belong to $Y_{\{12,13\}}$.  The $Y_{\sigma}$ for other sets $\sigma$ are computed similarly and shown below:
\begin{align*}
Y_{\{12,13\}}=&\big\{\{12,13\}, \{12,13,14\}, \{12,13,23\}, \{11,12,13\}, \{11,12,13,14\}, \{12,13,14,23\}\big\} \\
Y_{\{22,23\}}=&\big\{\{22,23\},\{12,22,23\},\{22,23,24\}, \{12, 22, 23,24\}\big\}\\
Y_{\{23,33\}}=&\big\{\{23,33\},\{13, 23,33\},\{23,33,34\}, \{13,23,33,34\}\big\}\\
Y_{\{24,34\}}=&\big\{\{24,34\}, \{24,34,14\},\{24,34,44\}, \{24,34,23\}, \{24,34,14,44\},\{24,34,14,23\}\big\}\\
Y_{\{12,34\}}=&\big\{\{12,34\}, \{12,34,13\}, \{12,34,14\}, \{12,34,23\}, \{12,34, 24\}, \{12,34,13,14\}, \\
& \{12,34,13,23\}, \{12,34, 14,23\}, \{12,34,14,24\}, \{12,34,23,24\}, \\
&\{12,34,13,14,23\}, \{12,34,14,23,24\}\big\}\\
Y_{\{13,24\}}=&\big\{\{13,24\}, \{13,24,12\},  \{13,24,14\},  \{13,24,23\},  \{13,24,34\},
\{13,24,12,14\}, \\
& \{13,24,12,23\},  \{13,24,12,34\},  \{13,24,14,23\}, \{13,24,14,34\}, \{13,24,23,34\}, \\
& \{13,24,12,14,23\}, \{13,24,12,14,34\}, \{13, 24, 12, 23,34\}, \{13,24,14,23,34\}, \\
& \{13,24,12,14,23,34\}\big\} \\
\end{align*}
We have 
\begin{align*}
    \omega(\sigma)=\begin{cases}
    11 &\text{if $\sigma\in Y_{\{12,13\}}$}\\
    12 &\text{if $\sigma\in Y_{\{22,23\}}$}\\
    13 &\text{if $\sigma\in Y_{\{23,33\}}$}\\
    14 &\text{if $\sigma\in Y_{\{24,34\}} \cup Y_{\{12,34\}}\cup Y_{\{13,24\}}$}\\
    \end{cases}
\end{align*}
 The matchings within $Y_{\{12, 13\}}$, per \cref{t:Morse}, are
$$
\{11,12,13\}\to \{12,13\} 
\qand 
\{11,12,13,14\}\to \{12,13,14\}.
$$
Therefore, there are two critical faces remaining in $Y_{\{12,13\}}$, namely 
$\{12,13,23\}$ and $\{12,13,14,23\}$.  On the other hand, the matchings within $Y_{\{13,24\}}$ are as follows: 
\begin{center}
\begin{tabular}{ c c }
 $\{13,24,14\}\to \{13,24\}$& $\{13,24,12,14\}\to\{13,24,12\}$ \\
 $\{13,24,14,23\}\to\{13,24,23\}$ &  $\{13,24,14,34\}\to \{13,24,34\}$\\
 $\{13,24,12,14,23\}\to\{13,24,12,23\}$& $\{13,24,12,14,34\}\to \{12,24,12,34\}$\\
$\{12,24,14,23,34\}\to \{12,24,23,34\}$ & $\{13,23,12,14,23,34\}\to \{13,23,12,23,34\}$\\ 
\end{tabular}
\end{center}
As a result, there are no critical faces left in $Y_{\{13,24\}}$ after the matching. Similarly, one can see that all faces of $Y_{\{12,34\}}$, $Y_{\{23,33\}}$, $Y_{\{22,23\}}$, $Y_{\{24,34\}}$ get matched. 
Thus the critical faces are the faces in $\LL^2_4$ that do not contain any of the faces in $N$ (see Section 6), union with the faces $\{12,13,23\}$ and $\{12,13,14,23\}$. These last two faces are of type (b) in the theorem, and the others are of type (a). 
\end{example}

By \cref{t:BW}, the matching $\mathcal{M}_\delta$ in \cref{t:L2q-prune} has an associated cell complex $\mathcal{X}_{\M_{q,\delta}}$, which will be the focus of the remainder of the paper. To make clear the connection to the complex $\LL_q^2$, we introduce a notation for this complex. Note that the labels of this complex will depend on the choice of the ideal $I$. However, for any choice of $I$ satisfying the relation $\delta$, the underlying complex is the same. Thus we define the complex without labels and will indicate the choice of labels when using the complex.

\begin{definition}[{\bf The cell complex $\LqDsq$}]\label{d:LD}
Suppose $q\ge s\ge 3$ and $\delta=(1,\{2,\ldots,s\})$ is a divisibility relation. Let $\M_{q,\delta}$ be the matching defined in \cref{t:L2q-prune}. If $\D=\{\delta\}$, we define $\LDsq$  to be the cell complex $\mathcal{X}_{\M_{q,\delta}}$ whose cells are in one-to-one correspondence with the $\M_{q,\delta}$-critical faces of $\LqDsq$.
\end{definition}

\begin{theorem}[{\bf Resolutions supported on $\LqDsq$}]\label{t:minimality}
Suppose $q\ge s\ge 3$ and $I$ is minimally generated by square-free monomials $m_1,m_2,\ldots,m_q$ such that
$m_1 \mid \lcm(m_2,\dots, m_s).$
If $\D=\{(1, \{2,\dots, s\}\})$, then the cell complex $\LDsq$ labeled with the monomial generators of $I^2$ supports a free resolution of $I^2$ and hence the $f$-vector of $\LDsq$ provides an upper bound for the Betti numbers of $I^2$. 

Moreover, the resolution is minimal when $I=\ED$. 
\end{theorem}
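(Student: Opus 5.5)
The plan has two parts: show that $\M_{q,\delta}$ is a homogeneous acyclic matching for every $I$ satisfying $m_1\mid\lcm(m_2,\dots,m_s)$, and then verify the minimality criterion of \cref{t:BW} when $I=\ED$. Throughout I follow the template of the proof of \cref{t:L1q-prune}.

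\emph{Step 1: $\LL^2_q$ supports a resolution of $I^2$.} By \cite{L2,Lr}, $\LL^2_q$ supports a free resolution of $I^2$ for any square-free monomial ideal $I$. By \cref{t:L2q-prune}, $\M_{q,\delta}$ is an acyclic matching on $\LL^2_q$.

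\emph{Step 2: homogeneity.} Every edge of $\M_{q,\delta}$ has the form $\tau\cup\{\be_{1j}\}\to\tau$, where $\tau\in Y_\sigma$ for some $\sigma\in N$, $\sigma\subseteq\tau$, and $j=j_\sigma$. It therefore suffices to show $m_1m_j\mid\lcm(\sigma)$ for each of the three types of $\sigma$, using only \cref{p:lcm-general-new}, which holds for arbitrary $I$.
\begin{itemize}
\item Type (i) with $j\le s$: use \eqref{e:m2} with all $t_k=j$.
\item Type (i) with $j>s$: use \eqref{e:m22} with $t_k=j$ and $u=k$ for any $k$. The extra term $m_km_j$ already occurs in the list, and $t_k\in\{1,j,k\}$.
\item Type (ii): use \eqref{e:m22} with $u=k$ for some $k$ with $t_k=j$.
\item Type (iii): use \eqref{e:m22} with all $t_k=1$ and the extra term $m_um_j$.
\end{itemize}
Hence $\lcm(\tau\cup\{\be_{1j}\})=\lcm(\tau)$, so the matching is homogeneous. \cref{t:BW} then gives that $\LDsq=\mathcal X_{\M_{q,\delta}}$ supports a free resolution of $I^2$. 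Since the $i$-cells of $\LDsq$ correspond to critical $i$-faces, the $f$-vector of $\LDsq$ bounds the Betti numbers of $I^2$.

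\emph{Step 3: minimality for $I=\ED$.} As in the proof of \cref{t:L1q-prune}, I would show that $\lcm(\tau)\ne\lcm(\tau\ssm\{\be_{ab}\})$ for every critical face $\tau$ and every $\be_{ab}\in\tau$. Suppose instead that $\ed{a}\ed{b}\mid\lcm(\tau\ssm\{\be_{ab}\})$ with $a\le b$. Since $\tau$ is a face of $\LL^2_q$, \cref{p:needed-div} forces $a=1$. It also forces $\tau\ssm\{\be_{1b}\}$ to contain a face $\rho$ as in \cref{p:needed-div}(i)--(iii) with $j=b$.

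\begin{itemize}
\item If $\tau$ is of type (a): every such $\rho$ is a face of type (i) or (ii) of $N$, or contains the type (i) face $\{\be_{21},\dots,\be_{s1}\}$. This contradicts the description of (a).
\item If $\tau$ is of type (b): then $\tau\subseteq\mathcal B$, so $b\ne1$ and $\tau$ contains no $\be_{bb}$. This rules out (i) when $b\le s$. If $b>s$, then $\be_{1b}\in\gamma'$, and options (i)--(iii) would each require an element $\be_{kb}$ with $2\le k\le s$ or $\be_{bu}$ with $u>s$. The elements of $\tau$ are $\be_{1k}$, elements of $\gamma\subseteq\{\be_{ik}\colon 1<i<k\le s\}$, and elements of $\gamma'$, none of which is of this form.
\end{itemize}

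The main obstacle is Step 3: making the case analysis airtight. In particular, option (iii) of \cref{p:needed-div} allows $u$ on either side of $j$, so one must check that it still lies inside a face excluded by (a) (it does, through $\{\be_{21},\dots,\be_{s1}\}$). One must also check that the type (b) faces admit no hidden divisibility coming from $\gamma$.
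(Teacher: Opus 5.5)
Your proposal is correct and follows essentially the same route as the paper. You establish homogeneity of $\M_{q,\delta}$ from \cref{p:lcm-general-new}, checking each of the three types of $\sigma\in N$ explicitly where the paper simply cites the proposition. You then prove minimality for $\ED$ by showing that no critical face has a facet with the same label: \cref{p:needed-div} rules out type (a) at once, and for type (b) it leads to a contradiction because $\tau\subseteq\mathcal B$ contains no $\be_{bb}$ and no element $\be_{\ell k}$ with $\ell>1$ and $k>s$.
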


\begin{proof}
 We label $\LL^2_q$ using the monomial generators $m_im_j$ of $I^2$.  Let $N$ and $\omega\colon N\to \N_q^2$ be as defined in the proof of \cref{t:L2q-prune}. For all $\sigma\in N$, by \cref{p:lcm-general-new} $\sigma$ has the additional property that $m_1m_j\mid \lcm(\sigma)$. Since the label of $\omega(\sigma)$ is $m_1m_j$, we see that the matching $\M_{q,\delta}$ defined in \cref{t:Morse} is homogeneous. 

The fact that $\LDsq$ (labelled with the monomial generators of $I^2$) supports a free resolution of $I^2$ follows from \cref{t:BW}. The entries of the $f$ are the ranks of the free modules in this resolution, which are necessarily upper bounds for the corresponding Betti numbers of $I^2$. 

To see the remaining statement, let $I=\ED$. Recall that the generators of $\ED$ satisfy $\D$. Consider the Morse complex $\LDsq$ from \cref{d:LD}, labeled with the monomial generators of $\EDsq$. 
By \cref{t:BW},  the free resolution supported on the Morse complex $\LDsq$ is minimal provided that, whenever $\sigma$ is an $\mathcal M_\delta$-critical face and $\sigma'=\sigma\smallsetminus\{\be_{ij}\}$ for some $\be_{ij}\in \sigma$ we have $\lcm(\sigma)\ne \lcm(\sigma')$. 

Indeed, assuming that $\lcm(\sigma)=\lcm(\sigma')$ for $\sigma, \sigma'$ as above, it follows that $\ed{i}\ed{j}\mid \lcm(\sigma')$. Assume $i\le j$. By \cref{p:needed-div}, we have that $i=1$ and one of the following holds: 
\begin{enumerate}[label=(\roman*)]
\item \label{i}$\{\be_{2j},\be_{3j}, \dots, \be_{sj}\}\subseteq \sigma'$; 
\item  \label{ii} $\{\be_{2t_2},\dots, \be_{st_s}\}\subseteq \sigma'$ for some $t_2, \dots, t_s$ with $\{t_2, \dots, t_s\}=
\{1,j\}$, and $j>s$;
\item \label{iii} $\{\be_{21},\be_{31},\dots, \be_{s1}, \be_{ju}\}\subseteq \sigma'$ for some $u$ with $j\ne u>s$, and $j>s$.   
\end{enumerate}
In particular, it is clear that $\sigma$ is  not a critical face of type (a) of  \cref{t:L2q-prune}, so it must be of type (b), so $\sigma=\{\be_{21},\be_{31},\dots, \be_{s1}\}\cup\gamma\cup \gamma'$ with 
$$\emptyset\ne\gamma\subseteq 
\{\be_{ik}\colon  1<i<k\le s\}\qand \gamma'\subseteq \{\be_{1(s+1)}, \dots, \be_{1q}\}.$$
Consequently, $\sigma$ does not contain any element of the form $\be_{\ell k}$ with $\ell>1$ and $k>s$. We see that (i) must hold, with $j\le s$. If $j>1$, we see that $\be_{jj}\in \sigma'$ and hence $\be_{jj}\in \sigma$, a contradiction. We must have thus $j=1$. In this case, $\be_{11}=\be_{ij}\in \sigma$, a contradiction.  
\end{proof}

As an application of \cref{t:minimality}, we can find the projective dimension of $\EDsq$ using the dimension of the Morse complex from \cref{t:L2q-prune}, which is the largest dimension of an $\M_{q,\delta}$-critical face of $\LL^2_q$.
Recall that when $q\ge 3$ $$\pd(\Eqsq)=\dim(\LL^2_4)=\binom{q}{2}-1.$$

\begin{theorem}[{\bf A sharp bound on $\pd(I)$ and $\pd(I^2)$ when $m_1\mid \lcm(m_2,\dots, m_s)$}]\label{t:betti-2-bound} 
Suppose $q\ge s\ge 3$ and $I$ is minimally generated by square-free monomials $m_1,m_2,\ldots,m_q$ such that
$m_1 \mid \lcm(m_2,\dots, m_s).$ 
Then
\begin{align}
\pd(I)&\leq\pd(\ED)=q-2\label{pdI}\\
\pd(I^2)&\leq\pd(\EDsq)=\begin{cases}
\binom{q}{2}-(q-s+2)&\text{if $q>s$}\\
\binom{q}{2}-1  &\text{if $q=s$} \,,
\end{cases}\label{pdI2}
\end{align}
where  $\D=\{(1, \{2,\dots, s\}\})$.
  \end{theorem}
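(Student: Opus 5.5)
The plan has three parts: reduce both inequalities to statements about $\ED$, then read each projective dimension off the largest critical face of a minimal Morse complex.

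\textbf{Reduction to the extremal ideal.} By \cite[Theorem 4.4]{Divisibilities}, any simplicial or cellular complex supporting a free resolution of $\ED^r$ also supports a free resolution of $I^r$. It therefore suffices to compute $\pd(\ED)$ and $\pd(\EDsq)$. In each case we exhibit a complex supporting a \emph{minimal} resolution of the extremal ideal; its dimension is then the projective dimension of that ideal and an upper bound for the corresponding power of $I$.

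\textbf{First power.} Apply \cref{t:L1q-prune} with $J=\emptyset$. The complex $\Gamma$ of faces of $\TT^1_q$ not containing $\sigma=\{\be_2,\dots,\be_s\}$ supports a minimal resolution of $\ED$, so $\pd(\ED)=\dim\Gamma$.
\begin{itemize}
\item The set $[q]\smallsetminus\{2\}$ has $q-1$ elements and does not contain $\sigma$, since $s\ge 3$ forces $\be_2\in\sigma$. So $\dim\Gamma\ge q-2$.
\item The full simplex contains $\sigma$, so it is not a face of $\Gamma$.
\end{itemize}
Hence $\pd(\ED)=q-2$.

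\textbf{Second power.} By \cref{t:minimality}, $\pd(\EDsq)$ is the maximum of $|\tau|-1$ over the $\M_{q,\delta}$-critical faces $\tau$ described in \cref{t:L2q-prune}.

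For type (b) faces, the largest one is $\{\be_{21},\dots,\be_{s1}\}\cup\{\be_{ik}:1<i<k\le s\}\cup\{\be_{1(s+1)},\dots,\be_{1q}\}$. It has $(s-1)+\binom{s-1}{2}+(q-s)$ elements, and this equals $\binom{s}{2}+q-s$.

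For type (a) faces, we bound the size of a critical face inside each facet of $\LL^2_q$.
\begin{itemize}
\item Inside $G_a$ (of size $q$): the face must avoid containing $\{\be_{2a},\dots,\be_{sa}\}$. For $a\in\{2,\dots,s\}$ this set is the full type-$\{j\}$ face with $j=a$. For general $a$ it is still a forbidden face, since $t_k=a$ for all $k$. So at least one vertex is dropped and the size is at most $q-1$.
\item Inside $\mathcal B$ (of size $\binom q2$): the face must avoid every forbidden set $\{\be_{2t_2},\dots,\be_{st_s}\}$ with $\{t_2,\dots,t_s\}=\{j\}$ or $\{1,j\}$, where $j>s$.
\end{itemize}

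\textbf{Main obstacle: the hitting-set count in $\mathcal B$.} The key step is computing how many vertices of $\mathcal B$ must be removed to hit all these forbidden sets.

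For $q>s$, each $j>s$ gives the family of transversals of the pairs $\{\be_{k1},\be_{kj}\}$ for $2\le k\le s$. Some of these transversals use only $t_k=1$; exclude that all-$1$ case, since it is not among the forbidden sets. To hit all the forbidden sets for a given $j$, delete $\be_{kj}$ for one fixed $k$; deleting both $\be_{k1}$ and $\be_{kj}$ for a single $k$ also works. The pure-$1$ set $\{\be_{21},\dots,\be_{s1}\}$ (type $j=1$) must also be hit.

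Deleting $\be_{21}$ and $\be_{2j}$ for each $j>s$ removes $1+(q-s)$ vertices. I expect the minimum over all hitting sets to be exactly $q-s+1$. This yields a type (a) face with $\binom q2-(q-s+1)$ vertices, i.e. dimension $\binom q2-(q-s+2)$.

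The lower bound on the hitting set is where care is needed. The plan is to pick $q-s+1$ pairwise disjoint forbidden sets, which forces at least one deletion per set:
\begin{itemize}
\item $\{\be_{2j},\dots,\be_{sj}\}$ for each $j>s$;
\item $\{\be_{21},\dots,\be_{s1}\}$.
\end{itemize}
These are disjoint because they use distinct second indices.

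For $q=s$ there is no $j>s$, so the only obstruction is $\{\be_{21},\dots,\be_{s1}\}$. Dropping $\be_{21}$ gives dimension $\binom q2-2$.

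\textbf{Comparing the candidates.} It remains to compare this with the type (b) value $\binom{s}{2}+q-s-1$ and with $q-2$.
\begin{itemize}
\item For $q=s$: type (b) gives $\binom q2-1$, which beats $\binom q2-2$. This matches the claimed formula.
\item For $q>s$: we have $\binom q2-\binom s2\ge q-1+\dots$, so the type (a) value $\binom q2-(q-s+2)$ dominates. This requires checking $\binom q2-\binom s2\ge 2(q-s)+1$, which holds because $\binom q2-\binom s2=\sum_{m=s}^{q-1}m\ge s(q-s)\ge 3(q-s)$.
\item Finally, $q-1$ is small compared with both.
\end{itemize}

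Combining these comparisons gives both cases of \eqref{pdI2}. Sharpness holds because the bounds are attained by $\ED$.
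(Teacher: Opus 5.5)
Your proposal is correct and follows essentially the same route as the paper. You reduce to $\ED$, read $\pd(\ED)$ off $\Gamma$, and for $\EDsq$ bound the critical faces of types (a) and (b), using the paper's face $\mathcal B\smallsetminus\{\be_{12},\be_{2(s+1)},\dots,\be_{2q}\}$ and the same $q-s+1$ pairwise disjoint forbidden sets for the matching upper bound. One small slip: deleting only $\be_{kj}$ for a fixed $k$ does not hit the mixed forbidden sets with $t_k=1$, but your actual construction deletes both $\be_{21}$ and every $\be_{2j}$ with $j>s$, so the argument is unaffected.
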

\begin{proof} 
The first inequalities in \eqref{pdI} and \eqref{pdI2} follow from \cite[Theorem 4.4]{Divisibilities}.

To compute $\pd(\ED)$, recall from \cref{t:L1q-prune}  that $\ED$ has a minimal free resolution supported on the simplicial sub-complex of $\TT^1_q$ whose faces do not contain $\{\be_2, \dots, \be_s\}$. The largest such face has $q-1$ elements, and so is of dimension $q-2$.

We now compute $\pd(\EDsq)$ by finding the largest dimension of a critical face from \cref{t:L2q-prune}. First assume $q=s$. Then the facet $\mathcal B$ of $\LL_q^2$ given in  \cref{d:L2} is a critical face of type (b) of \cref{t:L2q-prune} with $\gamma$ maximal and $\gamma'=\emptyset.$ Since $\mathcal B$ has $\binom{q}{2}$ elements, we have 
\begin{equation}
\label{pd-bound}\pd(\EDsq)\ge \binom{q}{2}-1 \qwhen q=s.
\end{equation}
In general, consider 
\begin{equation}
\label{sigma-def}
\sigma=\mathcal B\smallsetminus \{\be_{12}, \be_{2(s+1)}, \dots, \be_{2q}\}.
\end{equation}
 We show that $\sigma$ is a critical face of type (a) from \cref{t:L2q-prune}. Assume that 
\begin{equation}
\label{sigma-a}
\sigma\supseteq \{\be_{2t_2}, \dots, \be_{st_s}\} \qwith \{t_2, \dots, t_s\}=\begin{cases}
\{1,j\} &\text{with $s<j\le q$} \quad \text{or} \\
\{j\} &\text{with $j\in [q]$ } 
\end{cases} \,\, .
\end{equation}
If $2\le j\le s$, then $t_i=j$ for $2\le i \le s$, so $\be_{jj}\in \sigma$. Now by \eqref{sigma-def}, $\be_{jj}\in \mathcal B$, contradicting the definition of $\mathcal B$ .  If $j=1$, then $\be_{12}\in \sigma$, contradicting \eqref{sigma-def}. If $j>s$, in which case $s < q$, then $\sigma$ contains $\e_{2t_2}$ where $t_2=1$ or $t_2=j$, contradicting \eqref{sigma-def}. We conclude that \eqref{sigma-a} does not hold, and thus $\sigma$ is a critical face of type (a). Observe that the dimension of $\sigma$ is equal to  $\binom{q}{2}-(q-s+1)-1$, showing that 
\begin{equation}
\label{pdle}
\pd(\EDsq)\ge \binom{q}{2}-(q-s+2)\,.
\end{equation}

To show the reverse inequality, we show that all critical faces listed in \cref{t:L2q-prune} have cardinality at most $\binom{q}{2}-(q-s+1)$ when $s < q$ and cardinality at most $\binom{q}{2}$ when $s=q$. First let $\tau$ be a critical face of type (a). Since $\tau$ does not contain $\{\be_{21}, \dots, \be_{s1}\}$, we know that $\be_{k1}\notin \tau$ for some $2 \le k\le s$. Furthermore, when $s < q$, the fact that  $\tau$ does not contain $\{\be_{2j}, \dots, \be_{sj}\}$ when $j>s$ implies that for each $j>s$ there exists $i_j\in \{2,\dots, s\}$ such that $\be_{i_jj}\notin \tau$. If $\tau \subseteq \mathcal B$, then $\tau$ can have at most $\binom{q}{2}-1$ elements if $q=s$ and at most $\binom{q}{2}-(q-s+1)$ elements if $q > s$. If $\tau \subseteq G_i$, then setting $j=i$ since $\tau \not\supseteq \{\be_{2i},\ldots,\be_{si}\}$, there is an element of $G_i$ not in $\tau$, so $\tau$ can have at most $q-1$ elements for any $q \ge s$. 

 Finally, we compare the various sizes of critical faces found above. If $q=s$ note that since $q \geq 3$ we have $\binom{q}{2}\ge q-1.$ Thus when $q=s$, the maximum cardinality of a critical face is $\binom{q}{2}$.

 Assume $q >s$.
 The maximum cardinality of a face of type (b) from \cref{t:L2q-prune} is 
 \begin{align*}
 (s-1)+\binom{s-1}{2}+ (q-s)&=\binom{s-1}{2}+q-1=\frac{(s-1)(s-2)+2q-2}{2}=\frac{s^2-3s+2q}{2}.
 \end{align*}
 On the other hand, we saw above that the maximum cardinality of a face of type (a) contained in $\mathcal B$ is: 
 \begin{align*}
 \binom{q}{2}-q+s-1=\frac{q(q-1)-2q+2s-2}{2}=\frac{q^2-3q+2s-2}{2}.
 \end{align*}

A direct computation shows
 \begin{align*}
 \frac{q^2-3q+2s-2}{2}-\frac{s^2-3s+2q}{2}&=\frac{q^2-s^2-5q+5s-2}{2}=\frac{(q-s)(q+s-5)}{2}-1\,.
 \end{align*}
Since $s \ge 3$ and $q>s$, we have
 $q+s\ge 7$, so the difference is positive and 
 $$\binom{q}{2}-q+s-1$$ 
 is the larger quantity. Finally, we compare this bound to $q-1$ when $q>s$. Consider
 $$\binom{q}{2}-q+s-1 - (q-1) = \frac{q^2 -5q +2s}{2}.$$ 
 The function $q^2 -5q +2s$ is increasing in both $q$ and $s$ when $q \ge 3$, and is positive for $q \ge 4,\ s\ge 3$, so $\binom{q}{2}-q+s-1$ is the larger quantity.

 Thus, recalling that projective dimension is one less than the maximum cardinality of a critical face, we have
$$\pd(\EDsq)=\begin{cases}
\binom{q}{2}-(q-s+2)&\text{if $q>s$}\\
\binom{q}{2}-1  &\text{if $q=s$}
\end{cases}.$$
\end{proof}

In order to understand the geometry of the cell complex $\LDsq$, we need to understand the order relation among the cells, which we describe next. In the statement below, we reference the types (a) and (b) of critical cells described in  \cref{t:L2q-prune}. 

\begin{theorem}[{\bf Cell order in the Morse complex}]\label{t:cell order}
Suppose $q\ge s\ge 3$ and $\delta = (1, \{2,\ldots, s\})$. Set $A = \mathcal M_{q,\delta}$ to be the matching on $\LL_q^2$ in \cref{t:L2q-prune}.  
If $\sigma$, $\tau$ are $A$-critical faces  with $|\sigma|=|\tau|-1$, then the following hold, where $\gamma=\tau\cap \{\be_{ik}\colon 1<i<k\le s\}$: 
\begin{enumerate}
\item If $\tau$ is of type (a), then $\sigma_A\le\tau_A$ if and only if  $\sigma\subseteq \tau$.
\item If $\tau$ is of type (b) and $|\gamma|>1$, then $\sigma_A\le\tau_A$ if and only if  $\sigma\subseteq \tau$. 
\item If $\tau$ is of type (b) and $|\gamma|=1$, then $\sigma_A\le\tau_A$ if and only if  $\sigma\subseteq \tau$ or
\[
 \sigma=((\tau\smallsetminus \gamma)\cup \{\be_{11}\})\smallsetminus \{\be_{1 \ell}\} \quad {\text{ with }} 2\le \ell\le s.
\] 
\end{enumerate}
\end{theorem}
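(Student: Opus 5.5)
We will use the Batzies--Welker characterization \eqref{e:cells}: for critical $\sigma,\tau$ with $|\sigma|=|\tau|-1$, we have $\sigma_A\le\tau_A$ if and only if there is a gradient path in $G^A_Y$ from $\tau$ to $\sigma$. Such a path alternates between down-steps (removing a vertex, along an edge not in $A$) and up-steps (adding a vertex, along a reversed matched edge). Since the endpoints differ in cardinality by exactly one, the path has the form
\[
\tau\to\rho_1\to\rho_1'\to\rho_2\to\rho_2'\to\cdots\to\rho_n=\sigma,
\]
with each $\rho_i'\to\rho_i$ in $A$ and each $\rho_i\to\rho_{i+1}$ a down-step. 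The direct down-step $\tau\to\sigma$ always gives $\sigma\subseteq\tau\Rightarrow\sigma_A\le\tau_A$. So in every case the work is to decide when a path of length greater than one can end at a critical face not contained in $\tau$.

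\textbf{Key step.} Analyze the first up-step. After removing $v\in\tau$, the face $\rho_1=\tau\smallsetminus\{v\}$ must be the lower end of a matched edge, so $\rho_1\in Y_\eta$ for some $\eta\in N$, with $\omega(\eta)=\be_{1j_\eta}\notin\rho_1$. Then $\rho_1'=\rho_1\cup\{\be_{1j_\eta}\}$ lies in $Y_\eta$ as well.
\begin{itemize}
\item \emph{Type (a).} Here $\tau$ contains no element of $N$, and hence neither does any $\rho_1\subseteq\tau$. So $\rho_1\notin Y_N$, no up-step is possible, and only direct inclusions occur. This proves (1).
\item \emph{Type (b).} Here $\tau\supseteq\{\be_{21},\dots,\be_{s1}\}$. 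The candidates for $\eta$ inside $\rho_1$ are of type (i) with $j=1$, so the next vertex added is $\be_{11}$. Other types are impossible, because $\tau$ has no $\be_{\ell k}$ with $\ell>1$, $k>s$, and type (i) with $2\le j\le s$ would need $\be_{jj}$. For $\rho_1\cup\{\be_{11}\}$ to be a face of $\LL^2_q$ it must lie in $G_1$. Hence $v$ must be the unique element of $\gamma$, which forces $|\gamma|=1$. When $|\gamma|>1$ there is no up-step, which proves (2).
\end{itemize}

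\textbf{Case $|\gamma|=1$.} With $\gamma=\{\be_{ik}\}$ and $v=\be_{ik}$, we get $\rho_1'=(\tau\smallsetminus\gamma)\cup\{\be_{11}\}\subseteq G_1$, consistent with \cref{r:arrowsinG1}. The next down-step removes some $w\ne\be_{11}$ from $\rho_1'$. There are three possibilities.
\begin{itemize}
\item If $w=\be_{1\ell}$ with $2\le\ell\le s$, the result $\rho_2=((\tau\smallsetminus\gamma)\cup\{\be_{11}\})\smallsetminus\{\be_{1\ell}\}$ no longer contains $\{\be_{21},\dots,\be_{s1}\}$. It therefore contains no element of $N$ (it lies in $G_1$), so it is critical of type (a), and we obtain the extra faces in (3).
\item If $w=\be_{1m}$ with $m>s$, then $\rho_2$ still contains $\{\be_{21},\dots,\be_{s1}\}$ and $\be_{11}$. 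By \cref{r:arrowsinG1} it is the upper end of a matched edge, so it is not critical. Any continuation must go down from $\rho_2$, but $\rho_2$ is matched upward only to nothing; it is already paired downward, so the path cannot go up again.
\item Removing $\be_{11}$ itself is forbidden, since that would retrace the matched edge.
\end{itemize}
Any longer continuation stays in $G_1$. By \cref{r:arrowsinG1}, every subsequent up-step again adds $\be_{11}$, which is impossible once $\be_{11}$ is present. So the path terminates, and the critical endpoints are exactly those listed.

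\textbf{Main obstacle.} The delicate point is this bookkeeping of longer paths in case (3). I plan to handle it by restricting to $G_1$ and using \cref{r:arrowsinG1}. We must also check that the faces listed in (3) are genuinely critical and have the right size. Both follow because each has $|\tau|-1$ elements and lies in $G_1$ without containing $\{\be_{21},\dots,\be_{s1}\}$.
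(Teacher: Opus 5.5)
Your proposal is correct and follows essentially the same route as the paper. The shared ingredients are the Batzies--Welker gradient-path criterion; the observation that a type (a) face contains no element of $N$, so no up-step is possible; the fact that below a type (b) face the only possible matching partner is $\{\be_{21},\dots,\be_{s1}\}$ with $\omega=\be_{11}$, which forces $\rho_1\subseteq G_1$ and hence $|\gamma|=1$; and \cref{r:arrowsinG1}, which stops the path after one up-step. The one sentence to clean up is the case $w=\be_{1m}$ with $m>s$: the point is that $\rho_2$ is already the upper end of a matched edge, so it cannot also be a lower end and no up-step can follow, while a further down-step would violate the cardinality count.
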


\begin{proof}
Recall from \eqref{e:cells} that $\sigma_A\le\tau_A$ is equivalent to the existence of a gradient path from $\tau$ to $\sigma$. As described for example in \cite[Lemma 3.2]{Saraetall},  the gradient path can be visualized as
\begin{equation}
\label{e:gradient}
\begin{array}{ccccccccccccc}
   \tau = \sigma_0 &&&& \sigma_2 &&&&&& \sigma_{u} &&  \\
  &\searrow  &&\nearrow &&\searrow &&&&\nearrow &&\searrow & \\ 
    &&\sigma_1&&&& \sigma_3 &\ldots&\sigma_{u-1}&&&& \sigma_{u+1}=\sigma  
    \end{array}
    \end{equation}
where arrows pointing down correspond to inclusions, with $|\sigma_i|=|\sigma_{i-1}|-1$ when $i$ is odd, and arrows pointing up correspond to edges that were reversed due to being in the matching, that is, $\sigma_2\to \sigma_1, \dots,  \sigma_{u}\to \sigma_{u-1}$ are in $A$. In each of the three cases above, if $\sigma\subseteq \tau$, then there is a gradient path of length one and hence $\sigma_A\le\tau_A$. This proves the reverse implications in (1) and (2) and part of the one in (3).

We now prove the direct implications in (1)--(3). Assume $\sigma_A\subseteq \tau_A$ and consider a gradient path as in \eqref{e:gradient}. 

(1) Assume $\tau$ is of type (a), namely $\tau$ does not contain any face of the form 
\[
\{\be_{2t_2}, \dots, \be_{st_s}\} \qwith \{t_2, \dots, t_s\}=\begin{cases}
\{1,j\} &\text{with $s<j\le q$} \qquad \text{or} \\
\{j\} &\text{with $j\in [q]$.} 
\end{cases}
\]
Since $\sigma_1\subseteq \tau$, it follows that $\sigma_1$ is also of type (a) and in particular, $\sigma_1$ is a critical face. Since, by definition, critical faces are not in the matching, there are no arrows pointing up from a critical face and we must have $\sigma_1=\sigma$ and $\sigma \subseteq \tau$. 

(2) and (3): Assume $\tau$ is of type (b), namely $\tau=\{\be_{21},\be_{31},\dots, \be_{s1}\}\cup\gamma\cup \gamma'$ with 
$$\emptyset\ne\gamma\subseteq 
\{\be_{ik}\colon  1<i<k\le s\}\qand \gamma'\subseteq \{\be_{1(s+1)}, \dots, \be_{1q}\}.$$
If $\sigma_1$ is a critical face, then, as above, we have $\sigma_1=\sigma\subseteq \tau$. Assume that $\sigma_1$ is not a critical face. In particular, $\sigma_1$ is not a critical face of type (b) and hence one of the following must hold: 
\begin{enumerate}
\item[(i)] $\sigma_1=\tau\smallsetminus\{\be_{i 1}\}$ with $2\le i\le s$; i.e., $\sigma_1 = \{\be_{21},\dots, \widehat{\be_{i 1}}, \dots, \be_{s1}\}\cup \gamma\cup \gamma'$

\item[(ii)] $\sigma_1=\tau\smallsetminus \{\be_{ik}\}$, where $\gamma=\{\be_{ik}\}$; 
i.e., $\sigma_1 = \{\be_{21},\be_{31},\dots, \be_{s1}\}\cup \gamma'.$
\end{enumerate}
It is also true that $\sigma_1$ is not a critical face of type (a), and hence there exists $j$ such that 
\begin{equation}
\label{e:tau1}
\tau\supseteq\sigma_1\supseteq \{\be_{2t_2}, \dots, \be_{st_s}\} \qwith \{t_2, \dots, t_s\}=\begin{cases}
\{1,j\} &\text{with $s<j\le q$} \quad \text{or} \\
\{j\} &\text{with $j\in [q]$.} 
\end{cases} \,\, .
\end{equation}
If (i) holds, then $\be_{i1}\notin \sigma_1$, and hence $t_i\ne 1$. In particular, we must have $j>1$. If $j\le s$, then \eqref{e:tau1} implies $\be_{jj}\in \sigma_1$, a contradiction. If $j>s$, then \eqref{e:tau1} implies $\be_{aj}\in\sigma_1$ for some $a$ with $2\le a\le s$, a contradiction. 
 Thus (ii) must hold and in particular $\sigma_1\in G_1$. The arrow $\sigma_1\to \sigma_2$ in the gradient path implies $\sigma_2\to \sigma_1\in A$. By \cref{r:arrowsinG1} we have then $\sigma_2=\sigma_1\cup\{\be_{11}\}= \{ \be_{11},\be_{21},\be_{31},\dots, \be_{s1}\}\cup \gamma'.$

We have $\sigma_3=\sigma_2\ssm \{\be_{1\ell}\}$ for some $\be_{1\ell}\in \sigma_2$ and we may assume $\ell\ne 1$ to avoid repetition in the gradient path. If $\sigma_3\ne \sigma$, then we must have  $\sigma_4\to \sigma_3\in A$. This yields a contradiction, as \cref{r:arrowsinG1} implies $\be_{11}\notin \sigma_3$. Hence $\sigma_3=\sigma$ is a critical cell, and in particular  it must be a critical cell of type (a). It follows that  $2\le \ell \le s$. 
 Note that since $\gamma = \{\be_{ik}\}$, $\sigma$ has the form specified in case (3).

This finishes the proof of the direct implications in (1)--(3). To finalize the proof of the reverse implication in (3), observe that when $|\gamma|=1$ and $\tau$ is of type (b) we have a gradient path
$$\begin{array}{cccccccc}
   \tau &&&& \sigma_2=(\tau\smallsetminus\gamma) \cup \{\be_{11}\} && \\
  &\searrow  && \nearrow && \searrow &\\ 
    &&\sigma_1=\tau\smallsetminus\gamma &&&& \sigma_3=((\tau\smallsetminus\gamma) \cup \{\be_{11}\})\smallsetminus\{\be_{1 \ell}\}=\sigma .
    \end{array}$$
where the fact that $\sigma_2\to \sigma_1\in A$ follows from \cref{r:arrowsinG1}. Thus, when $2\le \ell\le s$, we have $\sigma_A\le \tau_A$. 
\end{proof}

\section{\bf Geometric realizations when $q=4$ and $r=2$}
\label{s:geometric}

It was shown in \cref{t:L1q-prune} and illustrated in \cref{running_ex2,two relations} that if an ideal $I$ satisfies relations of the form 
$$\D = \big \{(1, \{2, \ldots, s\}) \big \} \cup \big \{(b, \{2, \ldots, s\})\colon b\in J \big \} \qforsome J\subseteq \{s+1,\dots, q\},$$
then a single complex supports a resolution of all such $I$, and this complex supports a minimal free resolution of $\ED$ regardless of the size of $J$.  
This complex is illustrated in the left hand diagram of \cref{f:two-triangles-edge}~(when $q=4$) and the right hand diagram in \cref{f:two-edges}~(when $q=3$).
 In \cref{t:minimality}, we constructed a specific simplicial complex $\LDsq$ for the case  when $J=\emptyset$ and $\D=\{(1,\{2,\ldots,s\})\}$, and showed that  $\LDsq$ supports a free resolution of $I^2$ for any $I$ satisfying the relation in $\D$, and the resolution is minimal for $\EDsq$. 

The main focus of this section is offering a geometric realization of the Morse complex $\LDsq$ in the case where $q=4$ and $\D$ consists of a single divisibility relation $\D = \{(1,\{2,3\})\}$.

\begin{example}\label{ex:6-1}
Let
$$
I=(m_1,m_2,m_3,m_4)
\qwhere
m_1 \mid \lcm(m_2,m_3)\, .
$$
Recall from \cref{ex:MorseL2} that the cells of the Morse complex correspond to critical faces of the matching described in \cref{t:L2q-prune}, which are $\{12,13,23\}$ and $\{12,13,14,23\}$ together with the faces of $\LL_4^2$ that do not contain any face in $N$. 
The critical faces of size at least one are listed below, and illustrated in \cref{f:L24-cut-down}. The two critical faces that correspond to non-simplicial cells in the Morse complex are underlined.
{\small
\begin{flalign*}
\text{\bf 3-cells: } & 
\underline{\{12,13,14, 23\}}, 
\{13,14,23,34\}, 
\{12,14,23,24\}\, ; \\
\text{\bf 2-cells: } &
\underline{\{12,13,23\}}, 
\{13,14,34\},
\{13,14,23\},
\{13, 14, 11\},
\{12,14,24\},
\{12,14,23\},
\{12,14,11\}, 
\{14,23,34\}\, ;\\ 
&\{14,23,24\},
\{14,34,44\},
\{14,24, 44\},
\{12,24,22\},
\{12,23,24\},
\{13,34,33\}, 
\{13,23,34\}\, ;\\
\text{\bf Edges: } &
\{12,14\},
\{12,23\},
\{12,24\},
\{12,11\},
\{12,22\},
\{13,14\},
\{13,23\},
\{13, 34\},
\{13,11\},
\{13,33\},
\{14,24\}\, ;\\
&\{14,23\},
\{14,34\},
\{14,11\},
\{14,44\},
\{23,24\},
\{23,34\},
\{24,22\},
\{24,44\},
\{34,33\},
\{34,44\}\, ;\\
\text{\bf Vertices: } &
\{11\},
\{12\},
\{13\},
\{14\},
\{22\},
\{23\},
\{24\},
\{33\},
\{34\},
\{44\}\, .
\end{flalign*}
}
The cells corresponding to critical faces found from $Y\smallsetminus N$ are simplices. This is because every subset of such a face is again a critical face in the list and so $(\sigma \smallsetminus \{ij\})_A \le \sigma_A$ for these faces whenever $\be_{ij}\in \sigma$, by \cref{t:cell order}.
However, the cells corresponding to the underlined faces have subcells relative to the order $\le$ that come from gradient paths as described in \cref{t:cell order} case (3), not merely from inclusions. In particular, $\{11,12,14\}_A \le \{12,13,14,23\}_A$ and $\{11,13,14\}_A \le \{12,13,14,23\}_A$.

The face $\{12,13,23\}$ also has subcells arising from gradient paths, in particular $\{11,12\}_A \le \{12,13,23\}_A$ and $\{11,13\}_A \le \{12,13,23\}_A$.
Thus the cell $\{12,13,23\}_A$ can be visualized as a square with vertices $11,12,13,23$ and edges $\{11,12\}_A$, $\{11,13\}_A$, $\{13, 23\}_A$ and $\{23,12\}_A$. Note that $\{12,13\}$ was used in the matching and so will not generate a cell. Similarly, the cell $\{12,13,14,23\}_A$ can be visualized as a pyramid whose faces are the square $\{12,13,23\}_A=\{11,12,13,23\}_A$, and the triangles $\{11,12, 14\}_A$, $\{11,13,14\}_A$, $\{13,14, 23\}_A$ and $\{23,12,14\}_A$. 

\begin{figure}[!htbp]
        
$$\begin{array}{ccc}
\tdplotsetmaincoords{60}{150}

\begin{tikzpicture}[tdplot_main_coords]

    \coordinate (A) at (0,0,0);
    \coordinate (B) at (1.75,0,0);
    \coordinate (C) at (3.5,0,0);
    \coordinate (D) at (0,1.75,0);
    \coordinate (E) at (0,3.5,0);
     \coordinate (F) at (1.75,1.75,0);
      \coordinate (G) at (0,0,1.75);
      \coordinate (H) at (0,0,3.5);
      \coordinate (I) at (1.75,0,1.75);
      \coordinate (J) at (0,1.75,1.75);
\draw [fill=gray!10] (B.center) -- (I.center) -- (G.center) -- (J.center)-- (D.center) -- (F.center);
      
  \draw (F) -- (B) ;
  \draw (F) -- (D);
   \draw [thick] (A) -- (B);
      \draw [thick] (A) -- (D);   
    \draw [thick] (A) -- (G);
       \draw (G) -- (H);
          \draw (G) -- (J);
          \draw (G) -- (I);
          \draw (B) -- (I);
          \draw (G) -- (B);
          \draw (B) -- (C);
          \draw (I) -- (C);
\draw (H) -- (I);
\draw (H) -- (J); 
\draw (J) -- (D);  
\draw (E) -- (J);
\draw (F) -- (J);
\draw (G) -- (F);
\draw (I) -- (F);
\draw (G) -- (D);

\draw (I) -- (J);
\draw (C) -- (F);
\draw (F) -- (E);
\draw (B) -- (D);
\draw (I) -- (D);
\draw (B) -- (J);

    \foreach \point in {A, B, C, D, E, F, G, H, I, J} \fill (\point) circle (2pt);

    \node[below left, xshift=3pt] at (A) {\tiny{$m_1^2$}};
    \node[above, left, xshift=1pt, yshift = 2pt] at (B) {\tiny{$m_1m_2$}};
    \node[above left] at (C) {\tiny{$m_2^2$}};
    \node[right] [label distance=-6pt]  at (D) {\tiny{$m_1m_3\ \ $}};
    \node[above right] at (E) {\tiny{$m_3^2$}};
    \node[ below] at (F) {\tiny{$m_2m_3$}};
     \node[above left] at (G) {\tiny{$m_1m_4$}};
    \node[right] at (H) {\tiny{$m_4^2$}};
    \node[left] at (I) {\tiny{$m_2m_4$}};
     \node[right] at (J) {\tiny{$m_3m_4$}};


    \draw[-] (0,0,0) -- (3.5,0,0) node[anchor=north east]{};
    \draw[-] (0,0,0) -- (0,3.5,0) node[anchor=north west]{};
    \draw[-] (0,0,0) -- (0,0,3.5) node[anchor=south]{};
\end{tikzpicture} 
&
\qquad \qquad \qquad
&
{\tiny \tdplotsetmaincoords{60}{150}
\begin{tikzpicture}[tdplot_main_coords]

    \coordinate (A) at (0,0,0);
    \coordinate (B) at (1.75,0,0);
    \coordinate (C) at (3.5,0,0);
    \coordinate (D) at (0,1.75,0);
    \coordinate (E) at (0,3.5,0);
     \coordinate (F) at (1.75,1.75,0);
      \coordinate (G) at (0,0,1.75);
      \coordinate (H) at (0,0,3.5);
      \coordinate (I) at (1.75,0,1.75);
      \coordinate (J) at (0,1.75,1.75);
  \draw (F) -- (B) ;
  \draw (F) -- (D);
   \draw (A) -- (B);
      \draw (A) -- (D);   
    \draw (A) -- (G);
       \draw (G) -- (H);
          \draw (G) -- (J);
          \draw (G) -- (I);
          \draw (B) -- (I);
          \draw (G) -- (B);
          \draw (B) -- (C);
          \draw (I) -- (C);
\draw (H) -- (I);
\draw (H) -- (J); 
\draw (J) -- (D);  
\draw (E) -- (J);
\draw (F) -- (J);
\draw (G) -- (F);
\draw (I) -- (F);
\draw (G) -- (D);

    \foreach \point in {A, B, C, D, E, F, G, H, I, J} \fill (\point) circle (2pt);

    \node[above right] at (A) {\tiny{$m_1^2$}};
    \node[above left] at (B) {\tiny{$m_1m_2$}};
    \node[above left] at (C) {\tiny{$m_2^2$}};
    \node[right] [label distance=-6pt]  at (D) {\tiny{$m_1m_3\ \ $}};
    \node[above right] at (E) {\tiny{$m_3^2$}};
    \node[ left] at (F) {\tiny{$m_2m_3$}};
     \node[above left] at (G) {\tiny{$m_1m_4$}};
    \node[right] at (H) {\tiny{$m_4^2$}};
    \node[left] at (I) {\tiny{$m_2m_4$}};
     \node[right] at (J) {\tiny{$m_3m_4$}};


    \draw[-] (0,0,0) -- (3.5,0,0) node[anchor=north east]{};
    \draw[-] (0,0,0) -- (0,3.5,0) node[anchor=north west]{};
    \draw[-] (0,0,0) -- (0,0,3.5) node[anchor=south]{};

\end{tikzpicture} }\\

\dim(\LL_4^2) = \binom{4}{2}-1=5 & \qquad \qquad \qquad & \dim(\LL_{4,\D}^2) = 5-2 =3

\end{array}$$
    \caption{Left: \ \ $\LL_4^2$ $\qquad$ Right: \ \ $\LL_{4,\D}^2$ when $\D=\{(1,\{2,3\})\}$ }
    \label{f:L24-cut-down}
\end{figure}

These results are illustrated in Figure \ref{f:L24-cut-down}. On the left is the simplicial complex $\LL^2_4$. On the right is a geometric representation of the cells of the Morse complex $\LL^2_{4,\D}$. This complex will support a resolution of $I^2$, when a vertex $\be_{ij}$ is labeled by $m_im_j$.
The square and pyramid described above appear in the depiction, along with the two tetrahedra appearing in $Y\smallsetminus N$. The pyramid and tetrahedra are three dimensional and all other cells are of lower dimension. 

For comparison purposes, we list the Betti numbers for the ${\E_4}^2$ and $\E_{4,\D}^{\,\,2}$ provided by resolutions supported on $\LL^2_4$ and $\LL_{4,\D}^2$ and note that the differences correspond to faces matched in \cref{ex:MorseL2}.

\begin{table}[h!] 
\begin{tabular}{|c|c|c|c|c|c|c|}
\hline
     &  $\beta_0 \leq$ & $\beta_1 \leq$ & $\beta_2 \leq$ & $\beta_3 \leq$ & $\beta_4 \leq$ & $\beta_5\leq$ \\ \hline
 $\LL^2_4$    & 10 & 27 & 32 & 19 & 6 & 1 \\ \hline
 $\LL^2_{4,\D}$ & 10 & 21 & 15 & 3 & 0 & 0 \\ \hline
\end{tabular}
\caption{The numbers of $i$-cells of $\LL^2_4$ and $\LL_{4,\D}^2$, bounding  $\beta_i(I^2)$ when
$I=(m_1,m_2,m_3,m_4)$
and
$m_1 \mid \lcm(m_2,m_3)$. In the cases  $I=\E_4$ and $I=\E_{4,\D}$, equality holds in the respective lines. } \label{t:table}
\end{table} 
\end{example}

 By \cref{t:minimality} and \cite[Prop.~7.10]{Lr}, when $|\D| \leq 1$, $\LL_{4,\D}^2$ supports a minimal resolution of $\E_{4,\D}^{\,\,2}$. For other square-free monomial ideals whose generators satisfy the relations in $\D$, $\LL_{4,\D}^2$ supports a resolution that is much closer to minimal than that of the Taylor complex or of $\LL^2_4$. Note that in general, the resolution need not be minimal, even if $I$ satisfies only the relations in $\D$ as there may be additional relations on $I^2$, as seen in the first example below. In addition, since $q=4$ and $s=3$ for each of these examples, by \cref{t:betti-2-bound}, we have an upper bound of $\binom{4}{2} - (4-3+1) =3$ for the projective dimension, which is achieved for all but the final example.

\begin{example} Recall from \cref{running_ex2} that 
$$
I_1=(ab,bcd,aef,cg), \quad 
(\beta_0({I_1}^2),\beta_1({I_1}^2),\beta_2({I_1}^2), \beta_3({I_1}^2))= (10, 17, 9, 1)\, ,
$$
and the only minimal divisibility relation that $I_1$ satisfies is the one in  $\D= \{(1, (2,3)\}$. The complex $\LL^2_{4,\D}$ supports a free resolution of ${I_1}^2$, and upper bounds for the Betti numbers of $I_1^2$ can be read from the corresponding row of  \cref{t:table}. However, additional relations exist for ${I_1}^2$. 
 For example, it is straightforward to check that the following relations are satisfied by the generators of ${I_1}^2$:
\begin{align*}
m_1m_4  \mid \lcm(m_1m_2, m_2m_4) \quad & m_1m_4  \mid \lcm(m_2m_3, m_2m_4)\\
m_3m_4  \mid \lcm(m_2m_3, m_1m_4) \quad & m_1m_4  \mid \lcm(m_1m_3, m_3m_4)\,.
\end{align*}
Using the additional relations, the cell complex in \cref{f:L24-cut-down} can be further reduced. 
It can be verified using Macaulay2 \cite{M2} that the left hand figure shown in \cref{temp2} supports a minimal resolution of ${I_1}^2$. Note that in this depiction, one of the 2-cells is non-planar. In particular, the vertices labeled $m_2m_3, m_3m_4, m_1m_4, m_1m_3$ form a cell, which is a boundary cell of the unique 3-dimensional cell.
\end{example}

We conclude with a final example, showing that as above, for a general ideal $I$ satisfying the relations $\D'=\{(1,\{2,3\}),(4,\{2,3\})\}$, the resolution supported on $\LDsq$ will not be minimal, but $\LDsq$ still provides effective bounds for the Betti numbers.

\begin{example}[{\bf Two divisibility relations}] \label{two relations redux}  
Unlike the situation for the first power given in \cref{t:L1q-prune} and illustrated in \cref{running_ex2,two relations}, when $I$ satisfies additional relations, the resolution supported on $\LDsq$ will in general no longer be minimal. Even for $q=4$, 
$$
\D=\{(1,\{2,3\})\} 
\qand 
\D'=\{(1,\{2,3\}),(4,\{2,3\})\}\,,
$$
the resolution supported on $\LL_{4,\D}^2$ will not be minimal for the ideal $\E^{\,\,2}_{\D'}$. Essentially, for the first power, once a Morse matching has been completed on the Taylor complex for $I$ using the relation $(1, \{2, \ldots, s\})$, enough faces have been removed that no additional matching based on $(b, \{2, \ldots, s\})$ remains. However, when starting with $I^2$, the additional divisibility relation in $\D'$ can be used to further reduce $\LL_{4,\D}^2$. Computation in Macaulay2 \cite{M2} shows that 
$$
(\beta_0(\E_{4,\D'}^{\,\, 2}),\beta_1(\E_{4,\D'}^{\,\, 2}),\beta_2(\E_{4,\D'}^{\,\, 2}), \beta_3(\E_{4,\D'}^{\,\, 2})) 
= 
(10,21,14,2),
$$ 
indicating that one 2-dimensional cell has been matched with one 3-dimensional cell. Noting that for $\E_{\D'}$ we have $m_1m_4 \mid \lcm(m_1^2, m_1m_2, m_1m_3,m_2m_3)$, one can see, and verify via the differentials 
computed using Macaulay2, that it is precisely the square and pyramid described above that are matched to reduce $\LDsq$ to a structure, which is simplicial, supporting a minimal resolution of $\E^{\,\,2}_{\D'}$. 

For example the ideal
$$
I_2=(ab, bcd, aef, ce)
$$ 
 in \cref{two relations} satisfies the divisibility relations in $\D'$, but  $m_1m_4 \mid m_2m_3$  and thus even the minimal number of generators, $\beta_0({I_2}^2)$, is strictly less than the bound provided by $\LDsq$. Using Macaulay2~\cite{M2} we see that
$$
(\beta_0({I_2}^2),\beta_1({I_2}^2),\beta_2({I_2}^2), \beta_3({I_2}^2))= (9,14,6,0),
$$
and we can  verify via the differentials that the figure on the right hand side of \cref{temp2} supports a resolution of ${I_2}^2$. 
\end{example}

Below are the figures illustrating simplicial complexes that support a minimal free resolution for ${I_1}^2$ and ${I_2}^2$ from \cref{running_ex2,two relations}.  The figure on the right shows 4 triangles and 2 quadrilaterals with no cells of dimension greater than two.  The figure on the left has 4 triangles and the 5 quadrilaterals, with one 3-dimensional cell outlined using thicker edges.
\begin{figure}[!htbp]
    
$$\begin{array}{cc}
\tdplotsetmaincoords{60}{150}

\begin{tikzpicture}[tdplot_main_coords]

    \coordinate (A) at (0,0,0);
    \coordinate (B) at (1.75,0,0);
    \coordinate (C) at (3.5,0,0);
    \coordinate (D) at (0,1.75,0);
    \coordinate (E) at (0,3.5,0);
     \coordinate (F) at (1.75,1.75,0);
      \coordinate (G) at (0,0,1.75);
      \coordinate (H) at (0,0,3.5);
      \coordinate (I) at (1.75,0,1.75);
      \coordinate (J) at (0,1.75,1.75);

  \draw [thick](F) -- (B) ;
  \draw [thick](F) -- (D);
   \draw [thick] (A) -- (B);
      \draw [thick](A) -- (D);   
    \draw [thick](A) -- (G);
       \draw (G) -- (H);
          \draw [thick](G) -- (J);
          \draw (G) -- (I);
          \draw [thick](G) -- (B);
          \draw (B) -- (C);
          \draw (I) -- (C);
\draw (H) -- (I);
\draw (H) -- (J); 
\draw (E) -- (D);  
\draw (E) -- (J);
\draw [thick](F) -- (J);
\draw [thick](G) -- (D);

    \foreach \point in {A, B, C, D, E, F, G, H, I, J} \fill (\point) circle (2pt);

    \node[above right] at (A) {\tiny{$m_1^2$}};
    \node[above left] at (B) {\tiny{$m_1m_2$}};
    \node[above left] at (C) {\tiny{$m_2^2$}};
    \node[right] [label distance=-6pt]  at (D) {\tiny{$m_1m_3\ \ $}};
    \node[above right] at (E) {\tiny{$m_3^2$}};
    \node[ below] at (F) {\tiny{$m_2m_3$}};
     \node[above left] at (G) {\tiny{$m_1m_4$}};
    \node[right] at (H) {\tiny{$m_4^2$}};
    \node[left] at (I) {\tiny{$m_2m_4$}};
     \node[right] at (J) {\tiny{$m_3m_4$}};

\end{tikzpicture} 
&{\tiny \tdplotsetmaincoords{60}{150}
\begin{tikzpicture}[tdplot_main_coords]

    \coordinate (A) at (0,0,0);
    \coordinate (B) at (1.75,0,0);
    \coordinate (C) at (3.5,0,0);
    \coordinate (D) at (0,1.75,0);
    \coordinate (E) at (0,3.5,0);
      \coordinate (G) at (0,0,1.75);
      \coordinate (H) at (0,0,3.5);
      \coordinate (I) at (1.75,0,1.75);
      \coordinate (J) at (0,1.75,1.75);

   \draw (A) -- (B);
      \draw (A) -- (D);   
    \draw (A) -- (G);
       \draw (G) -- (H);
          \draw (G) -- (J);
          \draw (G) -- (I);
          \draw (G) -- (B);
          \draw (B) -- (C);
          \draw (I) -- (C);
\draw (H) -- (I);
\draw (H) -- (J); 
\draw (E) -- (J);
\draw (G) -- (D);

    \foreach \point in {A, B, C, D, E, G, H, I, J} \fill (\point) circle (2pt);

    \node[below left] at (A) {\tiny{$m_1^2$}};
    \node[above left] at (B) {\tiny{$m_1m_2$}};
    \node[above left] at (C) {\tiny{$m_2^2$}};
    \node[right] [label distance=-6pt]  at (D) {\tiny{$m_1m_3\ \ $}};
    \node[above right] at (E) {\tiny{$m_3^2$}};
     \node[above left] at (G) {\tiny{$m_1m_4$}};
    \node[right] at (H) {\tiny{$m_4^2$}};
    \node[left] at (I) {\tiny{$m_2m_4$}};
     \node[right] at (J) {\tiny{$m_3m_4$}};

 \coordinate (T1) at (.5,0,.5);
    \coordinate (T3) at (0,.5,.5);
    \coordinate (T2) at (0,0,2.5);
    \coordinate (T4) at (0,0,2.2);
    \coordinate (P1) at (1.4,0,.8);
    \coordinate (P2) at (0,1,.7);

    \draw[-] (0,0,0) -- (3.5,0,0) node[anchor=north east]{};
    \draw[-] (0,0,0) -- (0,3.5,0) node[anchor=north west]{};
    \draw[-] (0,0,0) -- (0,0,3.5) node[anchor=south]{};

\end{tikzpicture} }\\

\end{array}$$
    \caption{Cell complexes supporting a minimal resolution of ${I_1}^2$ (left) and ${I_2}^2$ (right)}
    \label{temp2}
\end{figure}

\bibliographystyle{plain}
\bibliography{refs.bib}

\end{document}